\documentclass[11pt,reqno]{amsart}

\usepackage[utf8]{inputenc}
\usepackage{lmodern}
\usepackage[T1]{fontenc}
\usepackage{amsmath,amssymb}
\usepackage{hyperref}
\usepackage{mathrsfs}
\usepackage{amsfonts}
\usepackage{mathtools}
\usepackage{tikz}
\usepackage{lmodern}
\usepackage{bigints}
\usepackage{relsize}
\usepackage{bbm}
\usepackage{amsthm}
\usepackage{mathtools}
\mathtoolsset{showonlyrefs}
\usepackage{geometry}
\usepackage{scalerel,stackengine}
\stackMath
\newcommand\reallywidehat[1]{
\savestack{\tmpbox}{\stretchto{
  \scaleto{
    \scalerel*[\widthof{\ensuremath{#1}}]{\kern-.6pt\bigwedge\kern-.6pt}
    {\rule[-\textheight/2]{1ex}{\textheight}}WIDTH-LIMITED BIG WEDGE
  }{\textheight} 
}{0.5ex}}
\stackon[1pt]{#1}{\tmpbox}
}
\numberwithin{equation}{subsection}
\usepackage{upgreek}
\usepackage{wrapfig}
\usepackage{amsmath}
\usepackage{amsthm}
\usepackage{mathrsfs}
\usepackage{color}
\usepackage{xcolor}
\usepackage{graphicx}
\usepackage{amssymb}
\usepackage{ dsfont }
\usepackage{url}
\usepackage{multicol}
\usepackage{tikz}
\usepackage{amsmath}
\usepackage{fancyhdr}
\usetikzlibrary{matrix}
\usetikzlibrary{arrows}
\usepackage{subfig}
\usepackage{float}
\usepackage{hyperref}
\hypersetup{hidelinks}
\usepackage{listings}
\usepackage{esint}
\usepackage{enumitem}
\usepackage{varwidth}

\usepackage{upgreek}

\newcommand{\cgc}[1]{}

\allowdisplaybreaks

\title[]{On the internal modes of the ground state for 3D cubic--quintic equations}
\author[G. Chen]{Gong Chen}

\thanks{G.C. was partially supported by NSF Grants DMS-2350301 and CAREER-DMS-2540992, by the Simons Foundation MP-TSM-00002258, and by the AMS Stefan Bergman Fellowship.}

\email{gc@math.gatech.edu}

\author[Z. Yang]{Zhaojie Yang}
\email{zyang768@gatech.edu}

\address{School of Mathematics, Georgia Institute of Technology, 686 Cherry Street,
Atlanta, GA}
\newcommand{\be}{\begin{equation}}
\newcommand{\ee}{\end{equation}}
\newcommand{\bp}{\begin{proof}}
\newcommand{\ep}{\end{proof}}
\newcommand{\bel}{\begin{equation}\label}
\newcommand{\eeq}{\end{equation}}
\newcommand{\bea}{\begin{eqnarray}}
\newcommand{\eea}{\end{eqnarray}}
\newcommand{\bee}{\begin{eqnarray*}}
\newcommand{\eee}{\end{eqnarray*}}
\newcommand{\ben}{\begin{enumerate}}
\newcommand{\een}{\end{enumerate}}

\newcommand{\R}{\mathbb{R}}

\newcommand{\re}{\operatorname{Re}}

\newcommand{\im}{\operatorname{Im}}

\newtheorem{thm}{Theorem}[section]
\newtheorem{cor}[thm]{Corollary}
\newtheorem{lem}[thm]{Lemma}
\newtheorem{prop}[thm]{Proposition}

\theoremstyle{remark}
\newtheorem{rem}{Remark}[section]

\definecolor{codegreen}{rgb}{0,0.6,0}
\definecolor{codegray}{rgb}{0.5,0.5,0.5}
\definecolor{codepurple}{rgb}{0.58,0,0.82}
\definecolor{backcolour}{rgb}{0.95,0.95,0.92}

\lstdefinestyle{mystyle}{
backgroundcolor=\color{backcolour},
commentstyle=\color{codegreen},
keywordstyle=\color{magenta},
numberstyle=\tiny\color{codegray},
stringstyle=\color{codepurple},
basicstyle=\footnotesize,
breakatwhitespace=false,
breaklines=true,
captionpos=b,
keepspaces=true,
numbers=left,
numbersep=5pt,
showspaces=false,
showstringspaces=false,
showtabs=false,
tabsize=2
}
\numberwithin{equation}{section}

\usepackage{pgfplots}
\pgfplotsset{compat=newest}



\theoremstyle{definition}

\numberwithin{ej}{section}

\usepackage{comment}

\begin{document}


\begin{abstract}
Motivated by the problem of asymptotic stability for ground states of the Klein--Gordon and Schr\"odinger equations in three spatial dimensions in the presence of internal modes, we study the corresponding models with cubic--quintic nonlinearities. These equations arise in several physical contexts; in particular, the cubic--quintic Klein--Gordon equation appears naturally in the study of spin-$0$ particles in quantum field theory.

More precisely, let $Q_\omega$ be the positive radial ground state of the three-dimensional
cubic--quintic elliptic equation
\[
 -\Delta Q_\omega+\omega Q_\omega-Q_\omega^3+Q_\omega^5=0.
\]
We study two related, but logically distinct, spectral problems as
$\omega$ approaches the endpoint $3/16$ of the ground-state branch.  First,
we determine the complete discrete spectrum of the scalar Hessian
\[
 L_+=-\Delta+\omega-3Q_\omega^2+5Q_\omega^4.
\]
This operator is also the exact linearized spatial operator around the
static state $Q_\omega$ for the real scalar cubic--quintic Klein--Gordon
equation with mass parameter $\omega$.  We prove that the number of its
discrete angular-momentum sectors tends to infinity, and obtain sharp
asymptotics for their locations and multiplicities.  Second, for the
cubic--quintic nonlinear Schr\"odinger equation, we analyze the full
Hamiltonian matrix linearization and prove the existence of internal modes
in precisely an interval of angular-momentum sectors of length comparable
to $(3/16-\omega)^{-1}$.
These results provide a rigorous linear foundation for future nonlinear
stability and radiation-damping analysis in three-dimensional
cubic--quintic models.

\end{abstract}

\maketitle

\setcounter{tocdepth}{1}

\begin{quote}

\tableofcontents

\end{quote}

\medskip
\section{Introduction}
Solitary waves are fundamental coherent structures arising in nonlinear dispersive equations. Understanding the dynamics of perturbations around such waves is a central problem in the theory of nonlinear wave equations. A key ingredient in this analysis is the spectral structure of the linearized operator around the soliton. In particular, eigenvalues located in the spectral gap below the continuous spectrum correspond to internal modes, which generate localized oscillations and play an important role in the interaction between solitary waves and dispersive radiation.

\medskip

The presence of internal modes significantly complicates the long-time dynamics near solitary waves. In the seminal work \cite{soffer1999resonances}, Soffer and Weinstein developed a mechanism describing how internal modes interact with radiation through nonlinear resonances, leading to radiation damping governed by the Fermi Golden Rule. Subsequent works, for example \cite{buslaev2003asymptotic,tsai2002asymptotic,cuccagna2003asymptotic,cuccagna2022revisiting,cuccagna2014asymptotic,leger2021internal,kopylova2011asymptotic,martel2024asymptotic,delort2022long,lei2022energy,lei2023energy}, together with the survey \cite{cuccagna2020survey}, further developed this theory and established asymptotic stability results for solitary waves in a wide range of nonlinear dispersive equations. In these analyses, internal modes play a central role: they generate long-lived oscillatory components whose interaction with the continuous spectrum ultimately governs the decay of perturbations.

\medskip
Despite their importance, to the best of our knowledge there is currently no literature establishing asymptotic stability of solitons \emph{under the influence of internal modes} for a natural, translation-invariant, power-type dispersive model in three spatial dimensions. Our broader goal is to identify such models and to develop a rigorous asymptotic stability theory incorporating both internal modes and modulation dynamics.

Motivated by the physical observations in \cite{anderson1983variational} and the variational stability framework in \cite{shatah1983stable}, we expect this phenomenon to occur for the cubic--quintic nonlinear Klein--Gordon equation
\begin{equation}\label{eq:CQKG}
    \partial_{tt}u - \Delta u + u - |u|^2u + |u|^4u = 0,
    \qquad (t,x)\in\R\times\R^3,
\end{equation}
which appears in the study of spin-$0$ particles in quantum field theory; see, e.g., \cite{lee1981particle}.
It is also well known that the corresponding cubic--quintic nonlinear Schr\"odinger equation shares closely related solitary-wave and spectral features; see, for instance, \cite{killip2017solitons}. 
Accordingly, we also consider
\begin{equation}\label{eq:CQNLS}
    i\partial_t u = -\Delta u - |u|^2u + |u|^4u,
    \qquad (t,x)\in\R\times\R^3.
\end{equation}

Both \eqref{eq:CQKG} and \eqref{eq:CQNLS} admit standing or solitary waves. For \eqref{eq:CQKG}, one seeks standing waves of the form
\begin{equation*}
  u(t,x)=e^{i\Omega t}\phi_\Omega(x),
\end{equation*}
which leads to the stationary equation
\begin{equation*}
   -\Delta \phi_\Omega + (1-\Omega^2)\phi_\Omega - |\phi_\Omega|^2\phi_\Omega + |\phi_\Omega|^4\phi_\Omega = 0.
\end{equation*}
For \eqref{eq:CQNLS}, solitary waves take the form
\begin{equation*}
    u(t,x)=e^{i\omega t}Q_\omega(x),
\end{equation*}
where \(Q_\omega\) solves
\begin{equation}\label{eq:NLS-elliptic}
    -\Delta Q_\omega + \omega Q_\omega - |Q_\omega|^2Q_\omega + |Q_\omega|^4Q_\omega = 0.
\end{equation}
Note that   for any admissible parameter
$\omega$, the same profile $Q_\omega$ is a static solution of the real
scalar equation
\begin{equation}\label{eq:scalar-cqkg-effective-mass}
 \partial_{tt}w-\Delta w+\omega w-w^3+w^5=0.
\end{equation}
Writing $w=Q_\omega+z$ gives, at the linear level,
\begin{equation}\label{eq:scalar-kg-linearization}
 \partial_{tt}z+L_+z=0,
 \qquad
 L_+=-\Delta+\omega-3Q_\omega^2+5Q_\omega^4.
\end{equation}
Thus an eigenvalue $\lambda\in(0,\omega)$ of $L_+$ produces a localized
scalar Klein--Gordon oscillation with temporal frequency $\sqrt\lambda$
below the continuum threshold $\sqrt\omega$; a negative eigenvalue gives a
hyperbolic direction, and the zero eigenvalue is generated by translation
invariance.

From this point on, for simplicity, we use the notation associated with \eqref{eq:NLS-elliptic} and \eqref{eq:CQNLS}. It is known that for \(0<\omega<\frac{3}{16}\), \eqref{eq:NLS-elliptic} admits infinitely many nontrivial solutions; in particular, there exists a unique positive solution \(Q_\omega\), up to translations, called the ground state. This solution is radial and strictly decreasing. 
Moreover, there exists a frequency \(\omega_*\in(0,\frac{3}{16})\) such that the ground state is orbitally unstable for \(0<\omega\le \omega_*\) and orbitally stable for \(\omega_*<\omega<\frac{3}{16}\); see \cite{zhang2025monotonicity}.

\medskip
As a first step toward the nonlinear asymptotic stability problem, the purpose of this paper is to study the discrete spectrum of the linearized operator in the threshold regime, where the frequency approaches the endpoint of the soliton branch, $\omega=\frac{3}{16}-\epsilon$. In this limit the ground state develops a large spatial scale, and the associated linearized operator acquires an increasingly rich discrete spectrum.

\cgc{maybe we put a picture here}

\medskip
Heuristically, this phenomenon can be understood from the spatial structure of the soliton. As the threshold is approached, the ground-state profile remains close to a constant on a region whose radius grows like

\begin{equation*}
    R_\epsilon\sim \frac{1}{\epsilon}.
\end{equation*}
Here \(\epsilon\) denotes the distance between the frequency and the threshold value. In this regime, the linearized operator behaves roughly like a Schr\"odinger operator with an effective potential supported on a region of size \(R_\epsilon\). From a semiclassical perspective, such a potential well typically admits a number of bound states proportional to its spatial extent, suggesting that the number of internal modes should grow like \(R_\epsilon\).

\medskip

Our main result confirms this heuristic picture and provides a precise description of the discrete spectrum close to the threshold $\frac{3}{16}$.
More precisely, we study the linearized operator around this ground state:

For \eqref{eq:CQNLS}, let
\[
u(t,x)=e^{i\omega t}\bigl(Q_\omega(x)+v(t,x)\bigr),
\qquad
v=a+ib,
\]
where \(a,b\) are real-valued. Linearizing
\[
i\partial_tu=-\Delta u-|u|^2u+|u|^4u
\]
around \(Q_\omega\), we obtain the full matrix operator
\begin{equation*}
	\partial_t
	\begin{pmatrix}
		a\\ b
	\end{pmatrix}
	=
	\mathcal L_\omega
	\begin{pmatrix}
		a\\ b
	\end{pmatrix},
\end{equation*}
where
\[
\mathcal L_\omega=
\begin{pmatrix}
	0&L_-\\
	-L_+&0
\end{pmatrix},
\]
with
\begin{equation*}
	L_+
	=
	-\Delta+\omega-3Q_\omega^2+5Q_\omega^4,
	\qquad
	L_-
	=
	-\Delta+\omega-Q_\omega^2+Q_\omega^4 .
\end{equation*}

Our main results are the following two theorems.
\begin{thm}\label{theorem-scalar}
    Let \(\epsilon=\frac{3}{16}-\omega\). Then the discrete eigenvalues of \(L_+\) below \(\omega\) are
    \begin{align*}
\lambda_0<\lambda_1=0<\lambda_2< \cdots < \lambda_{\ell_\epsilon}< \omega,
    \end{align*}
    where $$\ell_\epsilon = \frac{3}{16}\epsilon^{-1}+\mathcal{O}(1).$$
    In addition, for each \(0\leq\ell\leq\ell_\epsilon\), the eigenspace associated with \(\lambda_\ell\) is \(2\ell+1\)-dimensional. Moreover, the following estimates hold:
    \begin{align*}
    \lambda_{\ell}-\lambda_{\ell-1} = \frac{32}{3} \ell \epsilon^2+ \mathcal{O}(\ell \epsilon^3), \quad \ell\ge 1,
\end{align*}
    and
    \begin{align*}
        \lambda_\ell= \frac{16}{3}(\ell^2+\ell-2)\epsilon^2 + \mathcal{O}((\ell+1)^2 \epsilon^3), \quad \ell\ge 0.
    \end{align*}
\end{thm}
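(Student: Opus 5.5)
Since $Q_\omega$ is radial, I would decompose $L^2(\mathbb R^3)$ into spherical harmonics. On the sector of angular momentum $\ell$ the operator $L_+$ restricts to the radial operator
\[
L_{+,\ell}=-\partial_r^2+\frac{\ell(\ell+1)}{r^2}+\omega-3Q_\omega^2+5Q_\omega^4,
\]
acting on $u=r\psi$ on $(0,\infty)$ with Dirichlet condition at $0$. Each eigenvalue of $L_{+,\ell}$ then carries multiplicity $2\ell+1$.

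The claimed structure is that each sector contributes exactly one eigenvalue below $\omega$. Call it $\lambda_\ell$. Since $L_{+,\ell}$ is increasing in $\ell$ and the ground state of each one-dimensional radial problem is simple, the $\lambda_\ell$ are strictly increasing. Two sectors are fixed by symmetry: $\lambda_1=0$ comes from $\partial_jQ_\omega$, and $\lambda_0<0$ is the known unique negative direction.

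\textbf{Step 1: profile of $Q_\omega$.} I would use the known structure of $Q_\omega$ as $\epsilon\to0$: a plateau $Q_\omega^2\approx a:=3/4$ up to radius $R_\epsilon\sim c\,\epsilon^{-1}$, followed by a 1D kink-type transition layer of width $O(1)$ centred at $R_\epsilon$. The 1D front $q$ solves $-q''+\tfrac{3}{16}q-q^3+q^5=0$ with $q\to\sqrt{3}/2$ on one side and $q\to 0$ on the other. The location of the front is fixed by the Pohozaev/energy balance, which gives $R_\epsilon=\tfrac{c_0}{\epsilon}+O(1)$ with an explicit $c_0$. The plateau value gives $\omega-9/4\cdot\tfrac13\cdots$; precisely, $\omega-3a+5a^2=2a(2a-1)\cdot\ldots$ is positive, so the plateau interior acts as a barrier for $L_+$.

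\textbf{Step 2: wall mode and effective problem.} The 1D linearization at the kink, $-\partial_s^2+\tfrac{3}{16}-3q^2+5q^4$, has $q'$ as zero mode, and this mode is its ground state. I would therefore take the trial function
\[
\psi_\ell=r^{-1}q'(r-R_\epsilon)Y_{\ell m}.
\]
Using curvature corrections (the term $2r^{-1}Q'$ in the equation, of size $\epsilon$), I would compute
\[
\langle L_{+,\ell}u,u\rangle/\|u\|^2=\frac{\ell(\ell+1)}{R_\epsilon^2}-\frac{2}{R_\epsilon^2}+O\big((\ell+1)^2\epsilon^3\big).
\]
The $-2/R^2$ term is forced, because the $\ell=1$ eigenvalue must vanish exactly. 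Matching to the claim requires $1/R_\epsilon^2=\tfrac{16}{3}\epsilon^2$, i.e. $R_\epsilon=\tfrac{\sqrt3}{4}\epsilon^{-1}$, and this identification of $c_0$ is what I would verify in Step 1.

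Consecutive differences then give $2\ell\cdot\tfrac{16}{3}\epsilon^2=\tfrac{32}{3}\ell\epsilon^2$. For rigour, I would use Feshbach/Lyapunov–Schmidt reduction onto $\mathrm{span}\{q'(\cdot-R_\epsilon)\}$. The complement has a uniform spectral gap: the 1D operator's second eigenvalue is bounded away from $0$, the plateau region has potential $\geq c>0$, and the exterior region has potential $\omega$. This gap yields both existence and uniqueness of exactly one eigenvalue below $\omega-\delta$ in each sector, with the stated errors.

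\textbf{Step 3: counting.} The leading formula reaches $\omega\approx3/16$ when $\tfrac{16}{3}\ell^2\epsilon^2\approx 3/16$, i.e. $\ell\sim\tfrac{9}{64}\epsilon^{-1}$. This does not match the claimed $\tfrac{3}{16}\epsilon^{-1}$. So for $\ell\sim\epsilon^{-1}$ the perturbative formula breaks down, since $\ell^2/R^2$ is now $O(1)$, and a separate analysis is needed. In this range I would treat
\[
-\partial_s^2+\tfrac{3}{16}-3q^2+5q^4+\kappa^2,\qquad \kappa=\ell/R_\epsilon,
\]
as a 1D family. Its ground state $\kappa^2$ stays below $\omega=\tfrac{3}{16}-\epsilon$ while $\kappa^2<\tfrac{3}{16}$, with corrections from the $r$-variation of $\ell^2/r^2$ across the layer. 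Thus $\ell_\epsilon=\tfrac{\sqrt3}{4}R_\epsilon\cdot\ldots+O(1)$, and the constant would be fixed by the true $R_\epsilon$. The threshold crossing needs care: near $\omega$ the eigenfunction leaks out to the exterior, and I would use a Sturm/Jost-function argument for $r>R_\epsilon$ to resolve the last $O(1)$ sectors.

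I would also show that no sector $\ell$ has a second eigenvalue below $\omega$, using the gap from Step 2 uniformly in $\ell$.

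\textbf{Main obstacle.} I expect the hardest part to be the uniform control for $\ell$ up to $\epsilon^{-1}$ near the threshold, together with the $O(\epsilon^3)$ precision of the front location $R_\epsilon$.
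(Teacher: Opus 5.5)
Step~3 rests on an arithmetic slip. Solving $\frac{16}{3}\ell^2\epsilon^2=\frac{3}{16}$ gives $\ell^2=\frac{9}{256}\epsilon^{-2}$, i.e.\ $\ell=\frac{3}{16}\epsilon^{-1}$, not $\frac{9}{64}\epsilon^{-1}$; your own $\kappa$-heuristic gives the same value $\frac{\sqrt3}{4}R_\epsilon=\frac{3}{16}\epsilon^{-1}$. So there is no mismatch, and your conclusion that the eigenvalue formula breaks down for $\ell\sim\epsilon^{-1}$ is wrong. It would in fact contradict the statement itself, which asserts the formula uniformly up to $\ell_\epsilon$. The formula survives for two reasons. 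First, $\ell(\ell+1)r^{-2}$ varies by only $\mathcal O(\ell^2R_\epsilon^{-3})=\mathcal O(\epsilon)$ across the interface. Second, the eigenfunctions stay localized there: outside, $\ell(\ell+1)r^{-2}+\omega-\lambda_\ell\gtrsim R_\epsilon^2r^{-2}$ forces decay like $(r_0/r)^{cR_\epsilon}$.

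Step~2 also does not deliver the claimed precision. With $q'(r-R_\epsilon)$ as trial function, the Rayleigh quotient of $L_{+,1}$ is nonnegative and of order $\epsilon^2$. The reason is that the normalized trial state differs from the normalized ground state $rQ_\omega'$ by an $\mathcal O(\epsilon)$ shape correction with a nonzero component orthogonal to $rQ_\omega'$, and on that complement $L_{+,1}$ is bounded below by a positive constant. For bounded $\ell$ this error is as large as the main term, and ``forcing'' $-2/R_\epsilon^2$ from $\lambda_1=0$ does not remove it.

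The paper instead uses the exact eigenfunction $rQ_\omega'$ of $\widetilde H_\epsilon^{(1)}$ together with the identity $\widetilde H_\epsilon^{(\ell)}=\widetilde H_\epsilon^{(1)}+(\ell(\ell+1)-2)r^{-2}$. This gives the upper bound on $\lambda_\ell$ and the lower bound on $\ell_\epsilon$ at once. Localization against $\widetilde H_\epsilon^{(1)}\ge-\omega$ gives the matching upper bound on $\ell_\epsilon$. The gaps follow from the adjacent-sector min--max sandwich
\[
2\ell\int_0^\infty\frac{v_\ell^2}{r^2}\,dr\le\lambda_\ell-\lambda_{\ell-1}\le2\ell\int_0^\infty\frac{v_{\ell-1}^2}{r^2}\,dr
\]
together with $\int_0^\infty v_\ell^2r^{-2}\,dr=R_\epsilon^{-2}+\mathcal O(\epsilon^3)$, uniformly in $\ell\le\ell_\epsilon$. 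Finally, $R_\epsilon=\frac{\sqrt3}{4\epsilon}+\mathcal O(1)$ must be derived rather than read off from the claim. The dissipation identity $F_\epsilon(Q_\omega(0))=-\int_0^\infty\frac2r(Q_\omega')^2\,dr$, combined with $F_\epsilon(Q_\epsilon^*)=-\frac38\epsilon+\mathcal O(\epsilon^2)$ and $\int(P_0')^2=\frac{3\sqrt3}{64}$, does this.

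The more serious gap is the claim that each sector has \emph{at most one} eigenvalue below $\omega$. Your ``uniform gap'' does not give it. The kink operator has no second eigenvalue below its continuum, so its zero mode is separated from the rest of its spectrum only by the distance $\frac3{16}$ to the continuum threshold. The upper $\lambda_\ell$ approach exactly that threshold. As you yourself note, your localization/Feshbach argument therefore controls only energies below $\omega-\delta$.

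By monotonicity ($n_-(\widetilde H_\epsilon^{(\ell)})$ is nonincreasing in $\ell$), everything reduces to $\ell=0$. There, with no centrifugal barrier, you must rule out a weakly bound second radial state in $(\omega-\delta,\omega)$. Your Jost-function remark in Step~3 targets the top sectors, but the threshold problem already appears at $\ell=0$. Two ingredients are missing:
\begin{enumerate}
\item The limiting operator $-\partial_x^2+\frac3{16}-3P_0^2+5P_0^4$ has exactly one eigenvalue below $\frac3{16}$ and \emph{no threshold resonance}. The paper obtains this from the factorization $\mathcal H_+=A_+^*A_+$.
\item This count must be transferred to the half-line operator $\widetilde H_\epsilon^{(0)}$ uniformly up to the threshold. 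The paper does this by Jost-function convergence after shifting the interface to the origin, followed by Rouch\'e's theorem on a contour through $z=0$.
\end{enumerate}
Without something of this kind, neither the completeness of the list $\lambda_0<\dots<\lambda_{\ell_\epsilon}$ nor the exact multiplicity $2\ell+1$ is established.
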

This result gives accurate estimates for the number and locations of the discrete eigenvalues.  In particular, the number of scalar discrete modes grows without bound as the threshold is approached. Moreover, the gaps between adjacent eigenvalues shrink, and these eigenvalues become dense in $(0,\omega)$. Together with the matrix result below, this provides a natural translation-invariant dispersive PDE model in three spatial dimensions whose linearization possesses arbitrarily many internal modes. Such a regime is expected to be particularly relevant for future investigations of nonlinear dynamics near solitary waves, including radiation-damping mechanisms governed by the Fermi Golden Rule. The spectral description obtained here provides the linear foundation for the analysis of modulation systems involving multiple internal modes and their interaction with dispersive radiation in cubic--quintic models.

\medskip
For the matrix operator, we have the following theorem:
\begin{thm}\label{theorem-matrix}
	There exist constants \(0<c<C<\infty\), independent of sufficiently small \(\epsilon\), and an integer \(\widetilde\ell_0\) satisfying
	\(c\epsilon^{-1}\le \widetilde\ell_0\le C\epsilon^{-1}\), such
	that \(\mathcal L^{(\ell)}\) has at least one internal mode for
	\(0\leq\ell\leq\widetilde\ell_0\), whereas \(\mathcal L^{(\ell)}\) has no internal modes for \(\ell>\widetilde\ell_0\).
\end{thm}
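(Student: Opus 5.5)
The plan is to decompose into spherical harmonics, reduce each sector to a spectral problem for the positive operator $L_-^{(\ell)}$ conjugating $L_+^{(\ell)}$, and then detect eigenvalues in $(0,\omega)$ by a Birman--Schwinger/min-max count. Since $Q_\omega$ is radial, $L_\pm$ commute with rotations. Restricting to the sector of degree $\ell$ gives the radial operators
\[
L_\pm^{(\ell)}=-\partial_r^2-\tfrac{2}{r}\partial_r+\tfrac{\ell(\ell+1)}{r^2}+\omega-V_\pm(r),
\]
with $V_+=3Q^2-5Q^4$ and $V_-=Q^2-Q^4$, and the corresponding block $\mathcal L^{(\ell)}$. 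An internal mode is an eigenvalue $\pm i\mu$ with $0<\mu<\omega$. Equivalently, $L_-^{(\ell)}L_+^{(\ell)}a=\mu^2 a$ with $0<\mu^2<\omega^2$.

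\textbf{Reduction to a self-adjoint pencil.} For $\ell\ge1$, $L_-^{(\ell)}>0$: indeed $L_-Q=0$ with $Q>0$ radial, so $L_-\ge0$ with kernel spanned by $Q$, which lies in $\ell=0$. Hence $(L_-^{(\ell)})^{1/2}$ is invertible, and internal modes correspond to eigenvalues $\mu^2\in(0,\omega^2)$ of the self-adjoint operator $A_\ell=(L_-^{(\ell)})^{1/2}L_+^{(\ell)}(L_-^{(\ell)})^{1/2}$. Its essential spectrum is $[\omega^2,\infty)$.

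For the variational characterization I will use the following criterion. The number of eigenvalues of $A_\ell$ below $\omega^2$ equals the number of negative eigenvalues of
\[
L_+^{(\ell)}-\omega^2 (L_-^{(\ell)})^{-1},
\]
or, without inverses, the maximal dimension of subspaces on which $\langle L_+ u,u\rangle<\omega^2\langle L_-^{-1}u,u\rangle$. Eigenvalues of $A_\ell$ below $0$ are excluded for $\ell\ge2$ by Theorem~\ref{theorem-scalar}, since then $L_+^{(\ell)}\ge \lambda_2>0$. The case $\ell=1$ is handled by translations: $L_+\partial_jQ=0$ and $L_-(x_jQ)=-2\partial_jQ$ form the generalized kernel. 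After projecting this out, the same count applies. For $\ell=0$ the known negative direction and the phase/scaling generalized kernel are handled similarly, and existence of an internal mode there will come from the test functions below.

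\textbf{Monotonicity and threshold.} The key structural fact is monotonicity in $\ell$: both $L_\pm^{(\ell)}$ increase with $\ell$, so $L_+^{(\ell)}-\omega^2(L_-^{(\ell)})^{-1}$ is increasing in $\ell$ in form sense. Therefore the number of internal modes is nonincreasing in $\ell$. Defining $\widetilde\ell_0$ as the last sector with at least one mode immediately gives the dichotomy. It remains to bound $\widetilde\ell_0$.

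\textbf{Lower bound.} I will use test functions $u=\chi(r)$ supported on the plateau $r\in[R_\epsilon/2,R_\epsilon]$, where $Q^2\approx 3/4$. There $L_+\approx-\Delta+\ell(\ell+1)/r^2+\omega-\tfrac94+\tfrac{45}{16}$, so the constant part is $\omega+\tfrac{9}{16}-$ slightly, which equals $\tfrac34$ to leading order. Similarly $L_-\approx -\Delta+\ell(\ell+1)/r^2+\omega-\tfrac34+\tfrac9{16}\approx -\Delta+\ell(\ell+1)/r^2+O(\epsilon)$. Thus $L_-$ is small on slowly varying functions, and $\omega^2 L_-^{-1}$ is large. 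For $\ell\le c\epsilon^{-1}$ and $\chi$ varying on scale $R_\epsilon$, we get $\langle L_-u,u\rangle\lesssim(\epsilon^2+\ell^2\epsilon^2)\|u\|^2$. By Cauchy--Schwarz, $\langle L_-^{-1}u,u\rangle\ge\|u\|^4/\langle L_-u,u\rangle$. This beats $\langle L_+u,u\rangle\approx\tfrac34\|u\|^2$ provided $\omega^2\cdot(C\ell^2\epsilon^2)^{-1}>\tfrac34$, i.e.\ $\ell\le c\epsilon^{-1}$.

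\textbf{Upper bound.} For $\ell\ge C\epsilon^{-1}$, I need $A_\ell\ge\omega^2$, i.e.\ $L_+\ge\omega^2L_-^{-1}$. It suffices to show $L_-^{(\ell)}\ge \omega$ and $L_+^{(\ell)}\ge\omega$, since then $L_-^{1/2}L_+L_-^{1/2}\ge\omega L_-\ge\omega^2$. Now $\ell(\ell+1)/r^2\ge C^2\epsilon^{-2}/r^2$ dominates $\max V_\pm\lesssim1$ for $r\lesssim C\epsilon^{-1}$, while for $r\gg R_\epsilon$ the potentials $V_\pm$ decay exponentially. The delicate part is the transition layer $r\approx R_\epsilon$, where the ratio $\ell^2/r^2\gtrsim C^2/R_\epsilon^2\cdot\epsilon^{-2}$ must still exceed $\sup V_\pm$. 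This holds once $C$ is large, provided $R_\epsilon\le C'\epsilon^{-1}$ sharply, which follows from the profile asymptotics of $Q_\omega$ used for Theorem~\ref{theorem-scalar}.

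\textbf{Main obstacle.} I expect the main obstacle to be making the upper bound rigorous uniformly near the wall. This requires pointwise control of $Q_\omega$ on the transition region, namely a kink profile $Q^2\approx \tfrac34(1+e^{\sqrt3(r-R_\epsilon)})^{-1}$. It also requires exploiting the $\ell$-sector Hardy term rather than the crude bound above. If the pointwise bound $L_\pm^{(\ell)}\ge\omega$ fails by $O(\epsilon)$ in the layer, the fallback is to compare directly with $\omega^2L_-^{-1}$, localizing with an IMS partition into plateau, wall and exterior regions.
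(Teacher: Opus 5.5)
Your proposal is correct and is essentially the paper's proof. It uses the same reduction to $K^{(\ell)}=(H_-^{(\ell)})^{1/2}H_+^{(\ell)}(H_-^{(\ell)})^{1/2}$ with the count $n_-\bigl(H_+^{(\ell)}-\omega^2(H_-^{(\ell)})^{-1}\bigr)$, the same form-monotonicity in $\ell$, the same plateau test functions with $\langle (H_-^{(\ell)})^{-1}u,u\rangle\ge\|u\|^4/\langle H_-^{(\ell)}u,u\rangle$ for $\ell\lesssim\epsilon^{-1}$, and the same bound $K^{(\ell)}\ge\omega H_-^{(\ell)}\ge\omega^2$ for $\ell\gtrsim\epsilon^{-1}$; the paper absorbs the zero and negative directions with an $N\ge3$-dimensional test space instead of projecting, and your worry about the transition layer is unnecessary because $\sup_r r^2\bigl(|V_+(r)|+|V_-(r)|\bigr)\lesssim R_\epsilon^2$ already closes the crude pointwise bound for large $C$ (note also that what is monotone is $n_-(K^{(\ell)}-\omega^2)$, which at $\ell=1$ includes the translation zero mode, so the dichotomy is read off for $\ell\ge2$, with $\ell\le1$ covered directly by existence).
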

Here \(\mathcal L^{(\ell)}\) denotes the restriction of \(\mathcal L\) to the \(\ell\)-th angular-momentum sector in the spherical harmonic decomposition. Again, the theorem implies that the number of internal modes of \(\mathcal L\) grows without bound as the threshold is approached. Unlike in the scalar case, \(\mathcal L^{(\ell)}\) may possess several internal modes within a fixed angular-momentum sector, as the proof in Section~4 shows.

\medskip

We briefly outline the framework of the proof. In Section~2, we first analyze the asymptotic behavior of the soliton \(Q_\omega\) by treating its radial ODE separately in three spatial regions. An a priori analysis indicates that \(Q_\omega\) exhibits distinct behavior in these regions. For \(r\lesssim\epsilon^{-1}\), the profile forms a broad plateau and remains exponentially close to \(Q_\omega(0)\); for \(r\gtrsim\epsilon^{-1}\), it has a far-field tail that decays exponentially to zero; and for \(r\approx\epsilon^{-1}\), it undergoes a transition between these two regimes. We obtain precise asymptotics for \(Q_\omega\) by constructing suitable approximate profiles and applying refined comparison arguments.

\medskip
To analyze the internal modes of the linearized operator, we first apply the spherical harmonic decomposition. A key technical step is to prove that, for each \(\ell\), the operator \(L_+^{(\ell)}\) has at most one eigenvalue below the essential spectrum. Using monotonicity with respect to \(\ell\), we reduce the main part of this question to the radial sector \(\ell=0\). Our new observation is that, after shifting the transition region, the limiting operator associated with \(L_+^{(0)}\) is identical to the linearized operator around the kink in the one-dimensional \(\phi^6\) model, which is known to have exactly one eigenvalue. A careful perturbation argument then transfers this spectral property to \(L_+^{(0)}\). Finally, to determine the accurate number of internal modes and to locate these eigenvalues, we combine the asymptotics of \(Q_\omega\) obtained in Section~2 with carefully chosen trial functions and min--max comparisons.

\medskip
The rest of the paper is organized as follows. In Section~2, we derive detailed asymptotics for the ground-state profile in the threshold regime. In Section~3, we analyze the associated scalar linearized operator and prove Theorem~\ref{theorem-scalar}. In Section~4, we study the matrix linearized operator and prove Theorem~\ref{theorem-matrix}.

\section{Asymptotics of $Q_\omega$}
In this section, we study the asymptotics of $Q_\omega$ when $\omega\to \frac{3}{16}$. Define $\epsilon= \frac{3}{16}-\omega>0$, and
\begin{align*}
    &F_\epsilon(x)= \frac{1}{2}\left(\frac{3}{16}-\epsilon\right)x^2-\frac{1}{4}x^4+\frac{1}{6}x^6,\\
    &F_\epsilon'(x)=\left(\frac{3}{16}-\epsilon\right)x-x^3+x^5,\\
    &F_\epsilon''(x)=\left(\frac{3}{16}-\epsilon\right)-3x^2+5x^4.
\end{align*}
Define
\begin{align*}
    &Q_\epsilon^*= \sqrt{\frac{1}{2} + \sqrt{\frac{1}{16}+ \epsilon}},\\
    &\alpha=\sqrt{\frac{3}{4}-\sqrt{3\epsilon}},
\end{align*}
so that $F_\epsilon'(Q_\epsilon^*)=0$ and $F_\epsilon(\alpha)=0$, with
\begin{align*}
    \alpha < \sqrt{\frac{3}{4}} < Q_\epsilon^*, \quad \alpha=\sqrt{\frac{3}{4}} + \mathcal{O}(\sqrt{\epsilon}), \quad Q_\epsilon^*=\sqrt{\frac{3}{4}} + \mathcal{O}(\epsilon).
\end{align*}

For later reference, the relevant sign information can be read off
explicitly.  If
\[
 \beta_\epsilon^2:=\frac34+\sqrt{3\epsilon},
 \qquad
 q_{-,\epsilon}^2:=\frac12-\sqrt{\frac1{16}+\epsilon},
\]
then
\begin{align}
 F_\epsilon(q)
 &=\frac{q^2}{6}(q^2-\alpha^2)(q^2-\beta_\epsilon^2),
 \label{eq:F-factorization}\\
 F_\epsilon'(q)
 &=q(q^2-q_{-,\epsilon}^2)(q^2-(Q_\epsilon^*)^2).
 \label{eq:Fprime-factorization}
\end{align}
Moreover, writing $s_\epsilon=(Q_\epsilon^*)^2$ and using
$\omega=s_\epsilon-s_\epsilon^2$, one finds
\begin{equation}\label{eq:Fsecond-upper-equilibrium}
 F_\epsilon''(Q_\epsilon^*)
 =2s_\epsilon(2s_\epsilon-1)\longrightarrow\frac34.
\end{equation}
Hence this second derivative is bounded above and below by positive
constants for all sufficiently small $\epsilon$.

\begin{figure}[H]
\centering
\includegraphics[width=0.68\textwidth]{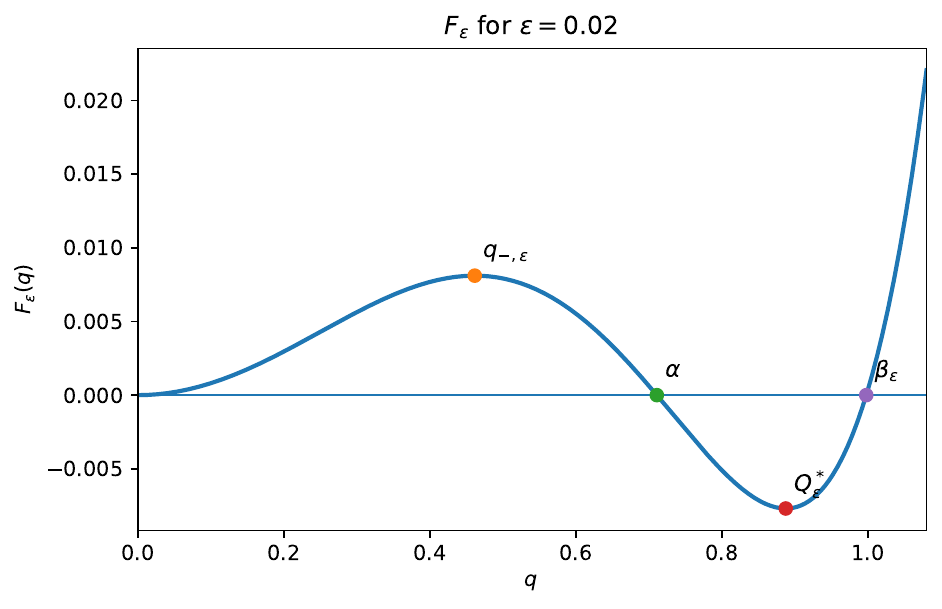}
\caption{The effective potential \(F_\epsilon\) for the representative value
\(\epsilon=0.02\).  The points \(q_{-,\epsilon}\) and \(Q_\epsilon^*\)
are the positive critical points of \(F_\epsilon\), while \(\alpha\) and
\(\beta_\epsilon\) are its positive nonzero roots.  In particular,
\(F_\epsilon<0\) on \((\alpha,\beta_\epsilon)\), and
\(Q_\epsilon^*\) is the upper local minimum relevant to the flat-top
ground state.  This schematic numerical plot is included  to
visualize the sign structure used below.}
\label{fig:F-epsilon}
\end{figure}

Figure~\ref{fig:F-epsilon} summarizes the geometry behind
\eqref{eq:F-factorization}--\eqref{eq:Fsecond-upper-equilibrium}: the
center value of the ground state lies to the right of the zero
\(\alpha\), but to the left of the upper equilibrium \(Q_\epsilon^*\).

Since $Q_\omega$ is radial, we identify $Q_\omega(x)=Q_\omega(r)$ with $r=|x|$. Then $Q_\omega$ solves
\begin{equation*}
    -Q_\omega''(r)-\frac{2}{r}Q_\omega'(r)+F'_\epsilon(Q_\omega)=0
\end{equation*}
with boundary conditions
$Q_\omega'(0)=0$ and $Q_\omega(+\infty)=Q_\omega'(+\infty)=0$.
Multiplying the equation by \(Q_\omega'\) and integrating, we obtain
\begin{align*}
    \partial_r\left[\frac{(Q_\omega')^2}{2}-F_\epsilon(Q_\omega)\right]=-\frac{2}{r}(Q_\omega')^2\le 0,
\end{align*}
hence
\begin{align*}
  \frac{(Q_\omega'(r))^2}{2}-F_\epsilon(Q_\omega(r)) = \int_r^{+\infty}  \frac{2}{s}(Q_\omega')^2 ds.
\end{align*}
In particular, 
$$F_\epsilon(Q_\omega(0)) = -\int_0^{+\infty}  \frac{2}{r}(Q_\omega')^2 dr < 0.$$ 

At the origin, radial regularity gives
$Q_\omega'(0)=0$ and
\[
 \Delta Q_\omega(0)=3Q_\omega''(0).
\]
Because $Q_\omega$ has its strict maximum at the origin, the stationary
equation implies
\[
 F_\epsilon'(Q_\omega(0))
 =\Delta Q_\omega(0)=3Q_\omega''(0)\leq0.
\]
The largest positive zero of $F_\epsilon'$ is $Q_\epsilon^*$, by
\eqref{eq:Fprime-factorization}; hence $Q_\omega(0)\leq Q_\epsilon^*$.
Equality would give the initial data
$Q_\omega(0)=Q_\epsilon^*$ and $Q_\omega'(0)=0$ at an equilibrium of the
radial ODE.  Uniqueness for the regular radial initial-value problem would
then force $Q_\omega\equiv Q_\epsilon^*$, contradicting
$Q_\omega(r)\to0$.  Therefore $Q_\omega(0)<Q_\epsilon^*$.

On the other hand, the strict energy identity above gives
$F_\epsilon(Q_\omega(0))<0$.  Since $Q_\omega(0)>0$, the factorization
\eqref{eq:F-factorization} yields $Q_\omega(0)>\alpha$.  We have proved
\begin{equation}\label{eq:apriori-center-bound}
 \alpha<Q_\omega(0)<Q_\epsilon^*.
\end{equation}
Notice also that $Q_\epsilon^*-\alpha=O(\sqrt\epsilon)$, so
$Q_\omega(0)$ lies in an $O(\sqrt\epsilon)$ neighborhood of the upper
equilibrium.

Next, define \(R_\epsilon>0\) by the condition
\begin{equation}\label{eq:Redef}
Q_\omega(R_\epsilon)=\sqrt{\frac{3}{8}}.
\end{equation} Since \(Q_\omega\) is strictly decreasing, this point is uniquely determined. As in \cite{killip2017solitons}, we have the a priori estimate \(R_\epsilon\approx\epsilon^{-1}\).

We shall prove later, in Proposition~\ref{asymptotic-R}, that
\[
R_\epsilon=\frac{\sqrt{3}}{4\epsilon}+\mathcal{O}(1).
\]

\begin{figure}[H]
\centering
\includegraphics[width=0.72\textwidth]{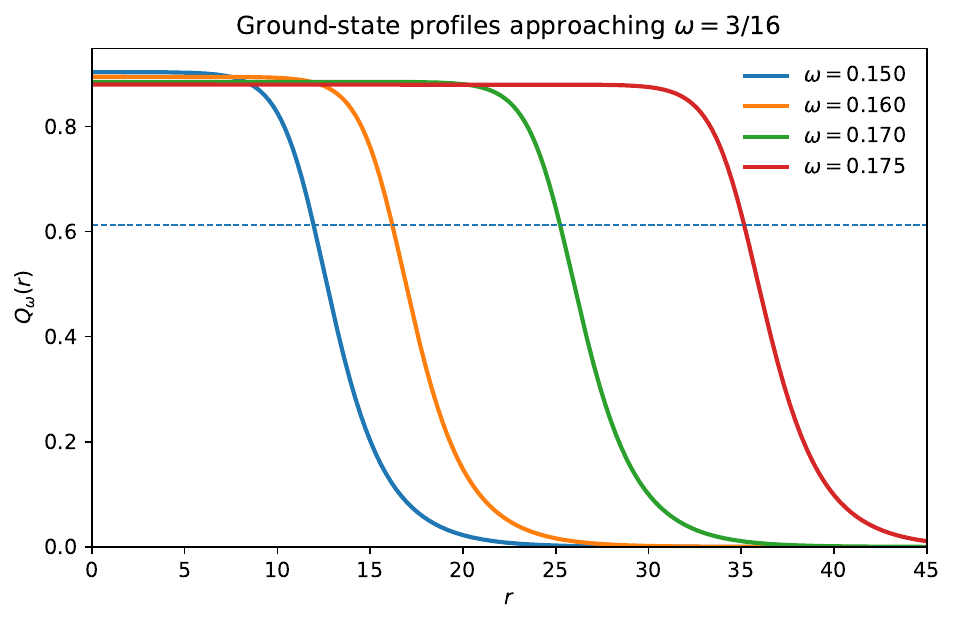}
\caption{Numerically computed positive radial ground states
\(Q_\omega\) for several values of \(\omega\) approaching \(3/16\).
The dashed horizontal line is the midpoint level \(\sqrt{3/8}\) used to
define \(R_\epsilon\).  The figure illustrates the flat-top regime:
the plateau height approaches \(Q_\epsilon^*\approx\sqrt{3/4}\), while
the transition radius increases like \(\epsilon^{-1}\)..}
\label{fig:Q-omega-family}
\end{figure}

The family in Figure~\ref{fig:Q-omega-family} illustrates 
three regions treated separately below: an almost constant interior
plateau, an \(O(1)\)-width interface near \(R_\epsilon\), and an
exponentially decaying exterior tail.

Inspired by the observation above, the strategy of this section is to combine local comparison arguments with explicit one-dimensional transition profiles. We first obtain exponential control near the plateau and in the far field, then compare the exact radial profile with the autonomous solutions $P_1$ and $P_2$ in the transition region, and finally identify the limiting heteroclinic profile $P_0$ and the asymptotic location of $R_\epsilon$.                                                                                                                                                                                                                                                                                                                                                                                                                                                                                                                                                                                                                                                                                                                                                                                                                                                                                                                                                                                                                                                                                                                                                                                                                                                                                                                                                                                                    

\subsection{Asymptotics of $Q_\omega$ when $r\to 0$}

Define \(y=r(Q_\epsilon^*-Q_\omega)\). Then \(y\) solves
\begin{align*}
    y''= -rF_\epsilon'(Q_\omega)=\frac{F_\epsilon'(Q_\epsilon^*)-F_\epsilon'(Q_\omega)}{Q_\epsilon^*-Q_\omega} y.
\end{align*}
Choose \(\theta(r)\in(Q_\omega(r),Q_\epsilon^*)\) so that
$$F''_\epsilon(\theta)=\frac{F_\epsilon'(Q_\epsilon^*)-F_\epsilon'(Q_\omega)}{Q_\epsilon^*-Q_\omega},
$$
Then
\begin{align*}
    y''= F''_\epsilon(\theta) y.
\end{align*}
We shall use the following elementary comparison lemma.
\begin{lem}[Comparison lemma]
    Suppose \(f\geq0\) and \(g\geq0\) satisfy
    \(f''=a(t)f\) and \(g''=b(t)g\), where \(a(t)\leq b(t)\), \(f(t_1)=g(t_1)=0\), and \(f(t_2)=g(t_2)>0\). Then \(f(t)\geq g(t)\) for \(t\in(t_1,t_2)\).
\end{lem}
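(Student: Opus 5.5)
The plan is a Sturm-type Wronskian argument. The difficulty is that $f$ and $g$ vanish at the same left endpoint, so a naive zero-counting argument is not enough. I would first treat the case where $a,b$ are continuous on $[t_1,t_2]$ and $f,g>0$ on $(t_1,t_2)$, and then remove the positivity assumption on the open interval.

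Introduce the Wronskian $W=f'g-fg'$ and compute
\[
W'=f''g-fg''=(a-b)fg\le 0
\]
on $[t_1,t_2]$, using $f,g\ge0$ and $a\le b$. At $t_1$ both functions vanish, so $W(t_1)=0$. Since $W$ is nonincreasing, $W\le 0$ on $[t_1,t_2]$.

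On any subinterval where $g>0$, the quotient $h=f/g$ satisfies $h'=W/g^2\le0$. So $h$ is nonincreasing, and $h(t)\ge h(t_2)=1$, which gives $f(t)\ge g(t)$.

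It remains to know that $g>0$ on $(t_1,t_2)$. Suppose instead that $g$ vanishes somewhere in the open interval, and let $s$ be the largest zero of $g$ in $[t_1,t_2)$; it exists because $g(t_2)>0$ and the zero set of $g$ is closed. Then $g>0$ on $(s,t_2]$.

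Apply the same Wronskian argument on $[s,t_2]$, using $W(s)=f'(s)g(s)-f(s)g'(s)=-f(s)g'(s)$. Because $g\ge0$ and $g(s)=0$, we have $g'(s)=0$, so $W(s)=0$ and the argument goes through on $(s,t_2)$, giving $f\ge g$ there. For $t\in(t_1,s]$ the inequality $f\ge g$ is handled separately.
- If $g\equiv0$ on $[t_1,s]$, it is trivial since $f\ge0$.
- More simply, $g(s)=g'(s)=0$ forces $g\equiv0$ by ODE uniqueness, contradicting $g(t_2)>0$. So in fact $g>0$ on $(t_1,t_2)$, and this whole case never occurs.

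The main obstacle is the degenerate left endpoint $t_1$, where $g(t_1)=0$ makes $h=f/g$ undefined. The quotient argument only needs $t\in(t_1,t_2)$, so this is harmless. I would still make sure $W$ is continuous up to $t_1$, which requires $f,g\in C^1$ near $t_1$.

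In the paper's application this holds. There $y=r(Q^*_\epsilon-Q_\omega)$ with $t_1=0$, and $y$ is smooth with $y(0)=0$.
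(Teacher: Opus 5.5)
Your proof is correct and is essentially the paper's argument: the Wronskian $W=f'g-fg'$ satisfies $W'=(a-b)fg\le0$ and $W(t_1)=0$, so $(f/g)'=W/g^2\le0$ and hence $f/g\ge f(t_2)/g(t_2)=1$ on $(t_1,t_2)$. Your extra step showing $g>0$ on $(t_1,t_2)$ fills in a point the paper leaves implicit when it divides by $g$: an interior zero of $g\ge0$ forces $g=g'=0$ there, and uniqueness for the linear ODE (with $b$ continuous) then gives $g\equiv0$, contradicting $g(t_2)>0$.
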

\begin{proof}
    Define \(W=f'g-g'f\). Then \(W'=f''g-g''f=(a-b)fg\leq0\). Since \(W(t_1)=0\), we have \(W(t)\leq0\) for \(t\in[t_1,t_2]\). Moreover, \(\left(\frac{f}{g}\right)'=\frac{f'g-g'f}{g^2}\leq0\). Since \(\frac{f(t_2)}{g(t_2)}=1\), it follows that \(f(t)\geq g(t)\) for \(t\in(t_1,t_2)\).
\end{proof}

We make the uniformity in this local comparison explicit.  By
\eqref{eq:apriori-center-bound},
$Q_\epsilon^*-Q_\omega(0)=O(\sqrt\epsilon)$, while
\eqref{eq:Fsecond-upper-equilibrium} gives
$F_\epsilon''(Q_\epsilon^*)\to3/4$.  A short bootstrap based on
$y''=F_\epsilon''(\theta)y$, $y(0)=0$, and
$y'(0)=Q_\epsilon^*-Q_\omega(0)$ shows that, on some fixed interval
$[0,r_0]$, the values of $Q_\omega(r)$ remain in a fixed small
neighborhood of $Q_\epsilon^*$.  Continuity of $F_\epsilon''$ then gives
constants $0<\lambda_-<\lambda_+<\infty$, independent of sufficiently
small $\epsilon$, such that
\begin{equation}\label{eq:uniform-local-convexity}
 \lambda_-<F_\epsilon''(\theta(r))<\lambda_+,
 \qquad 0\leq r\leq r_0.
\end{equation}
Indeed, one may first choose a neighborhood on which
$F_\epsilon''\in[1/2,1]$ for all small $\epsilon$, and then choose $r_0$
by continuity and the preceding bootstrap.

Consider
\begin{align*}
    &y_+''=\lambda_+y, \quad  y_+(0)=0, \quad y_+(r_0)=y(r_0),\\
    &y_-''=\lambda_-y, \quad y_-(0)=0, \quad y_-(r_0)=y(r_0).
\end{align*}
By the comparison lemma, we have
\begin{align*}
    y_+(r)\le y(r) \le y_-(r).
\end{align*}
Hence we have
$$
\frac{e^{\sqrt{\lambda_+}r}-e^{-\sqrt{\lambda_+}r}}{e^{\sqrt{\lambda_+}r_0}-e^{-\sqrt{\lambda_+}r_0}}y(r_0)\le y(r)\le \frac{e^{\sqrt{\lambda_-}r}-e^{-\sqrt{\lambda_-}r}}{e^{\sqrt{\lambda_-}r_0}-e^{-\sqrt{\lambda_-}r_0}}y(r_0),
$$
and
\begin{align}\label{asymptotic-Q-near0}
Q_\epsilon^*-\frac{e^{\sqrt{\lambda_-}r}-e^{-\sqrt{\lambda_-}r}}{e^{\sqrt{\lambda_-}r_0}-e^{-\sqrt{\lambda_-}r_0}}\frac{r_0}{r}(Q_\epsilon^*-Q_\omega(r_0))\le Q_\omega(r)\le Q_\epsilon^*-\frac{e^{\sqrt{\lambda_+}r}-e^{-\sqrt{\lambda_+}r}}{e^{\sqrt{\lambda_+}r_0}-e^{-\sqrt{\lambda_+}r_0}}\frac{r_0}{r}(Q_\epsilon^*-Q_\omega(r_0)).
\end{align}

We next recover the derivative estimate from the profile estimate.  This
step uses the ODE and is not a formal differentiation of the preceding comparison estimate.
Since
\[
 y=r(Q_\epsilon^*-Q_\omega),
 \qquad
 ry'-y=-r^2Q_\omega',
\]
and $(ry'-y)'=ry''$, regularity at the origin gives the exact identity
\begin{equation}\label{eq:derivative-recovery-identity}
 -r^2Q_\omega'(r)
 =\int_0^r sF_\epsilon''(\theta(s))y(s)\,ds.
\end{equation}
The upper comparison for $y$ and \eqref{eq:uniform-local-convexity} imply,
with $a=\sqrt{\lambda_-}$,
\[
 |Q_\omega'(r)|
 \lesssim \frac{r_0}{r^2}e^{-a(r_0-r)}
       \int_0^r s e^{-a(r-s)}\,ds.
\]
For $0<r\le1$ the last integral is $O(r^2)$, whereas for $r\ge1$ it is
$O(r)$.  Thus
\begin{equation}\label{eq:local-Q-derivative}
 |Q_\omega'(r)|
 \lesssim \frac{r_0}{1+r}
 e^{-\sqrt{\lambda_-}(r_0-r)},
 \qquad 0<r\le r_0.
\end{equation}

The comparison argument above is local in the sense that $r_0$ is fixed independently of $\epsilon$.  It therefore gives exponential control of the profile on every fixed interval near the origin, but it does \emph{not} by itself justify taking $r_0$ of order $R_\epsilon$.  The global size of the plateau will be recovered later, after the transition analysis, by comparing $Q_\omega$ with the heteroclinic profile $P_2$ and then determining the asymptotics of $R_\epsilon$.  We will therefore refrain from using \eqref{asymptotic-Q-near0} with $r_0\sim R_\epsilon$ at this stage.

\subsection{Asymptotics of $Q_\omega$ when $r\to \infty$}
Define \(y=rQ_\omega\). Then \(y\) solves
\begin{align*}
    y''= \left[(\frac{3}{16}-\epsilon)-Q_\omega^2+Q_\omega^4\right]y,
\end{align*}
Choose \(\tilde r_0\) such that, for \(\tilde r_0\leq r<\infty\),
\(0<\tilde{\lambda}_-<\left[(\frac{3}{16}-\epsilon)-Q_\omega^2+Q_\omega^4\right]<\tilde{\lambda}_+\)
for some \(\tilde{\lambda}_-\) and \(\tilde{\lambda}_+\). Applying the same comparison lemma, we obtain
\begin{align}\label{asymptotic-Q-nearinfty}
    e^{-\sqrt{\tilde{\lambda}_+}(r-\tilde r_0)}\frac{\tilde r_0}{r}Q_\omega(\tilde r_0) \le Q_\omega(r)\le e^{-\sqrt{\tilde{\lambda}_-}(r-\tilde r_0)}\frac{\tilde r_0}{r}Q_\omega(\tilde r_0).
\end{align}
Similarly, we also have
\begin{align}\label{asymptotic-Q'-nearinfty}
    |Q_\omega'(r)|\lesssim \frac{\tilde r_0}{r}e^{-\sqrt{\tilde{\lambda}_-}(r-\tilde r_0)}.
\end{align}

\
\subsection{Asymptotics of $Q_\omega$ in the transition region}
In the transition region $r\approx R_\epsilon$, the first-order term $-\frac{2}{r}Q_\omega'(r)$ in the equation for $Q_\omega$ is small. It is therefore natural to compare $Q_\omega$ with solutions of the autonomous equation
\begin{align*}
    -P''+F_\epsilon'(P)=0.
\end{align*}
This equation has the conserved energy
\begin{align*}
    \frac{(P')^2}{2}-F_\epsilon(P)=C.
\end{align*}
Choosing $C=0$, we obtain the homoclinic solution 
\begin{equation*}
    P_1(x,x_0)=\sqrt{\frac{\frac{3}{4}-4\epsilon}{1+\sqrt{\frac{16}{3}\epsilon}\cosh\!\left(2\sqrt{\frac{3}{16}-\epsilon}\,(x-x_0)\right)}},
\end{equation*}
with any phase shift $x_0$. Note that $P_1(-\infty,x_0)=P_1(+\infty,x_0)=0$.

Choosing instead \(C=-F_\epsilon(Q_\epsilon^*)\), we obtain the heteroclinic solution
\begin{align*}
P_2(x,x_0)
\;=\;
-\frac{Q_\epsilon^*\,\sinh\!\big(\kappa(x-x_{0})\big)}
{\sqrt{\frac{(Q_\epsilon^*)^{2}}{2(Q_\epsilon^*)^{2}-\frac{3}{2}}+\,\cosh^{2}\!\big(\kappa(x-x_{0})\big)}},
\end{align*}
with
\(\kappa=\sqrt[4]{\frac{1}{16}+\epsilon}\sqrt{\frac{1}{2}+\sqrt{\frac{1}{16}+\epsilon}}\). Note that \(P_2(-\infty,x_0)=Q_\epsilon^*\) and \(P_2(+\infty,x_0)=-Q_\epsilon^*\).

Choose \(x_1\) and \(x_2\) so that
\(P_1(0,x_1)=P_2(0,x_2)=Q_\omega(R_\epsilon)=\sqrt{\frac{3}{8}}\) and \(P_1'(0,x_1)<0\). For simplicity, we continue to denote these profiles by \(P_1\) and \(P_2\).

\begin{figure}[H]
\centering
\includegraphics[width=0.72\textwidth]{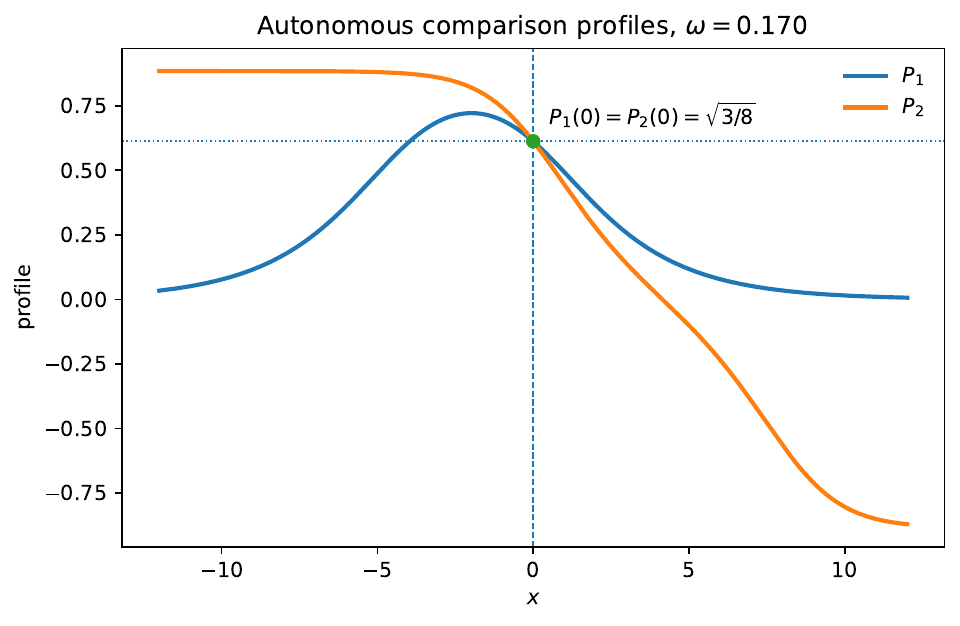}
\caption{ The two autonomous comparison profiles for the
representative value \(\omega=0.17\), translated so that
\(P_1(0)=P_2(0)=\sqrt{3/8}\) and \(P_1'(0)<0\).
The homoclinic \(P_1\) has zero conserved energy and returns to \(0\);
the heteroclinic \(P_2\) has conserved energy
\(-F_\epsilon(Q_\epsilon^*)\) and connects \(Q_\epsilon^*\) to
\(-Q_\epsilon^*\).  In the comparison with the positive radial ground
state, the relevant branches are \(P_2\) on the left of the interface
and \(P_1\) on the right.}
\label{fig:P1-P2}
\end{figure}

Figure~\ref{fig:P1-P2} also explains the choice of the glued profile
introduced below: \(P_2\) has the correct upper plateau as
\(x\to-\infty\), whereas \(P_1\) has the correct positive tail as
\(x\to+\infty\).

We now claim that 
\begin{prop}
The following inequalities hold:
$P_1(r-R_\epsilon)<Q_\omega(r)<P_2(r-R_\epsilon)$ for $0<r<R_\epsilon$, while $P_2(r-R_\epsilon)<Q_\omega(r)<P_1(r-R_\epsilon)$ for $R_\epsilon<r<+\infty$.
\end{prop}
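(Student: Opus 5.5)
The plan is a phase-plane comparison. All three functions $Q_\omega$, $P_1(\cdot-R_\epsilon)$, $P_2(\cdot-R_\epsilon)$ take the value $\sqrt{3/8}$ at $r=R_\epsilon$. On the relevant ranges they are strictly decreasing, so each can be inverted and described by its derivative as a function of the height $q$. The ordering of the profiles will follow from an ordering of their slopes at equal heights.

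\textbf{Step 1: energy bounds.} Set $E(r)=\frac{(Q_\omega')^2}{2}-F_\epsilon(Q_\omega)$. From the identity $E(r)=\int_r^\infty \frac{2}{s}(Q_\omega')^2\,ds$ proved above, together with $Q_\omega'<0$, we get $E(r)>0$ for every finite $r$, and $E$ is strictly decreasing.

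For the upper bound, note $E(r)<E(0)=-F_\epsilon(Q_\omega(0))$. By \eqref{eq:Fprime-factorization}, $F_\epsilon'<0$ on $(q_{-,\epsilon},Q_\epsilon^*)$. Using \eqref{eq:apriori-center-bound}, which gives $q_{-,\epsilon}<\alpha<Q_\omega(0)<Q_\epsilon^*$, we obtain $F_\epsilon(Q_\omega(0))>F_\epsilon(Q_\epsilon^*)$. Hence
\[
0<E(r)<-F_\epsilon(Q_\epsilon^*),\qquad 0<r<\infty .
\]
The homoclinic $P_1$ has energy exactly $0$ and the heteroclinic $P_2$ has energy exactly $-F_\epsilon(Q_\epsilon^*)$. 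So at any height $q$ attained by the relevant functions,
\[
|P_1'|=\sqrt{2F_\epsilon(q)}<|Q_\omega'|<\sqrt{2(F_\epsilon(q)-F_\epsilon(Q_\epsilon^*))}=|P_2'|,
\]
where the $P_1$ slope is taken only where $F_\epsilon(q)\ge0$, i.e.\ $q\le\alpha$.

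\textbf{Step 2: travel-time comparison for $0<r<R_\epsilon$.} Write $r_Q(q)$ for the radius at which $Q_\omega=q$. Then
\[
R_\epsilon-r_Q(q)=\int_{\sqrt{3/8}}^{q}\frac{dp}{|Q_\omega'|},
\]
and analogously for $P_1$ and $P_2$ on their decreasing branches. The latter is the branch to the right of the maximum for $P_1$, and all of $P_2$, which runs from $Q_\epsilon^*$ downward.

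Since $|Q_\omega'|<|P_2'|$ at every height $q\in(\sqrt{3/8},Q_\omega(0))$, the level $q$ is reached by $Q_\omega$ strictly to the left of where $P_2(\cdot-R_\epsilon)$ reaches it. Monotonicity then gives $Q_\omega(r)<P_2(r-R_\epsilon)$ on $(0,R_\epsilon)$. This works because $P_2$ attains every height below $Q_\epsilon^*$.

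The comparison with $P_1$ runs the other way. For $q\in(\sqrt{3/8},\alpha)$, $P_1$ is slower, so $P_1(r-R_\epsilon)<Q_\omega(r)$ wherever $P_1(r-R_\epsilon)\in(\sqrt{3/8},\alpha)$ on its decreasing branch. Let $r_\alpha$ be the radius where $P_1(r-R_\epsilon)=\alpha$. At $r_\alpha$ we then have $Q_\omega(r_\alpha)\ge\alpha$, and since $Q_\omega$ is decreasing, $Q_\omega(r)>\alpha\ge P_1(r-R_\epsilon)$ for all $r<r_\alpha$. This closes the inequality on the whole interval $(0,R_\epsilon)$.

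\textbf{Step 3: the exterior $r>R_\epsilon$.} For heights $q\in(0,\sqrt{3/8})$ the same integral comparison applies with reversed orientation. Because $Q_\omega$ moves faster than $P_1$, it reaches each lower level sooner, so $Q_\omega(r)<P_1(r-R_\epsilon)$. Because it moves slower than $P_2$, it reaches each level later, so $Q_\omega(r)>P_2(r-R_\epsilon)$ as long as $P_2>0$. Once $P_2(r-R_\epsilon)\le0$, the inequality is trivial since $Q_\omega>0$. Strictness of all inequalities comes from the strict energy bounds in Step 1.

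\textbf{Main obstacle.} The delicate points are bookkeeping rather than analysis. First, the integrands $1/|P_i'|$ may be singular at the endpoints $q=\alpha$ and $q=Q_\epsilon^*$; I would handle this by the monotonicity argument of Step 2 rather than by integrating up to the singular heights. Second, one must verify that the branches of $P_1$ and $P_2$ normalized by $P_i(0)=\sqrt{3/8}$ with $P_i'(0)<0$ are genuinely the decreasing ones on the ranges used. I would check this directly from the explicit formulas for $P_1$ and $P_2$.
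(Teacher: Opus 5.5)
Your proof is correct. It uses the same two energy bounds as the paper, $0<\int_r^\infty \frac2s (Q_\omega')^2\,ds<-F_\epsilon(Q_\epsilon^*)$, but packages them differently.

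The paper's argument is local and by contradiction. It first evaluates the three first integrals at the matching height to get $0>P_1'(0)>Q_\omega'(R_\epsilon)>P_2'(0)$, which fixes the ordering of the profiles near $R_\epsilon$. It then rules out a first contact point $r_0$: the contact geometry forces one ordering of $Q_\omega'(r_0)$ and $P_i'(r_0-R_\epsilon)$, while the energy identity at $r_0$ forces the opposite.

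You instead turn the energy bounds into a strict ordering of slopes at every common height. Integrating $dr=dq/|Q_\omega'|$ then converts this directly into an ordering of the radii at which each level is reached. This needs neither the contradiction argument nor the separate slope comparison at $R_\epsilon$.

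The paper's route is shorter and avoids inverting the profiles or worrying about endpoint heights. Yours handles explicitly the turning point of $P_1$ at height $\alpha$. The paper passes over this point when it says that both derivatives are negative at the contact point. For a contact to the left of the maximum of $P_1(\cdot-R_\epsilon)$ one has $P_1'\ge 0>Q_\omega'$, which still gives the contradiction, but the paper does not say so.

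One small point needs a line. As written, Step 2 gives only $Q_\omega(r_\alpha)\ge\alpha=P_1(r_\alpha-R_\epsilon)$ at the single radius $r_\alpha$. Strictness there follows in either of two ways.
\begin{itemize}
\item Since $\alpha$ is a simple zero of $F_\epsilon$, the integral $\int_{\sqrt{3/8}}^{\alpha}dp/\sqrt{2F_\epsilon(p)}$ is finite. So the strict travel-time comparison extends to $q=\alpha$ and gives $r_Q(\alpha)>r_\alpha$.
\item Equality at $r_\alpha$, together with $Q_\omega'(r_\alpha)<0=P_1'(r_\alpha-R_\epsilon)$, would make $Q_\omega$ drop below $P_1(\cdot-R_\epsilon)$ immediately to the right of $r_\alpha$. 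This contradicts the strict inequality you already have on $(r_\alpha,R_\epsilon)$.
\end{itemize}
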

\begin{proof}
We first compare the slopes at the matching point $r=R_\epsilon$.  The exact profile satisfies
\[
\frac12(Q_\omega'(r))^2-F_\epsilon(Q_\omega(r))=\int_r^{+\infty}\frac{2}{s}(Q_\omega'(s))^2\,ds,
\]
whereas the autonomous profiles satisfy
\[
\frac12(P_1')^2-F_\epsilon(P_1)=0,
\qquad
\frac12(P_2')^2-F_\epsilon(P_2)=-F_\epsilon(Q_\epsilon^*).
\]

Before evaluating at the matching value, note that the upper slope bound
uses the center estimate proved above.  Indeed,
\[
 0<\int_{R_\epsilon}^{\infty}\frac2s(Q_\omega')^2\,ds
 <\int_0^{\infty}\frac2s(Q_\omega')^2\,ds
 =-F_\epsilon(Q_\omega(0))
 <-F_\epsilon(Q_\epsilon^*).
\]
The last inequality follows because $Q_\epsilon^*$ is the strict local
minimum of $F_\epsilon$ in the interval containing $Q_\omega(0)$ and
$Q_\omega(0)<Q_\epsilon^*$.  Thus the exact radial dissipation lies
strictly between the conserved-energy constants of $P_1$ and $P_2$.

Evaluating these identities at the common value $Q_\omega(R_\epsilon)=P_1(0)=P_2(0)=\sqrt{3/8}$ yields
\[
|P_1'(0)|<|Q_\omega'(R_\epsilon)|<|P_2'(0)|.
\]
Since all three profiles are decreasing through the transition point, this can be rewritten as
\begin{equation}\label{eq:slope-order-R}
0>P_1'(0)>Q_\omega'(R_\epsilon)>P_2'(0).
\end{equation}

We prove only one inequality; the others are similar. Consider \(r<R_\epsilon\). At \(r=R_\epsilon\), we have \(Q_\omega(R_\epsilon)=P_1(0)\) and, by \eqref{eq:slope-order-R},
\[
Q_\omega'(R_\epsilon)<P_1'(0).
\]
Hence for $r<R_\epsilon$ but sufficiently close to $R_\epsilon$ one has $Q_\omega(r)>P_1(r-R_\epsilon)$.  Suppose, toward a contradiction, that this inequality fails somewhere on $(0,R_\epsilon)$.  Then there exists a first crossing point $r_0\in(0,R_\epsilon)$ such that
\[
Q_\omega(r)>P_1(r-R_\epsilon)\quad\text{for }r\in(r_0,R_\epsilon),
\qquad
Q_\omega(r_0)=P_1(r_0-R_\epsilon).
\]
By first-contact monotonicity, one must have
\[
Q_\omega'(r_0)\ge P_1'(r_0-R_\epsilon).
\]
On the other hand, evaluating the first integrals at $r=r_0$ and using the same identity as at $r=R_\epsilon$, we get
\[
|Q_\omega'(r_0)|^2=2F_\epsilon(Q_\omega(r_0))+2\int_{r_0}^{+\infty}\frac{2}{s}(Q_\omega'(s))^2\,ds
>2F_\epsilon(P_1(r_0-R_\epsilon))=|P_1'(r_0-R_\epsilon)|^2.
\]
Since both derivatives are negative, this implies
\[
Q_\omega'(r_0)<P_1'(r_0-R_\epsilon),
\]
which contradicts the first-contact inequality.  Thus
\[
P_1(r-R_\epsilon)<Q_\omega(r),\qquad 0<r<R_\epsilon
\]as desired.
\end{proof}

We next quantify the error in this comparison.

\begin{prop}\label{error-estimates-P1P2}
The following inequalities hold: \(|P_2(r-R_\epsilon)-Q_\omega(r)|\lesssim\epsilon\) and \(|P_2'(r-R_\epsilon)-Q_\omega'(r)|\lesssim\epsilon\) for \(0\leq r\leq R_\epsilon\); and \(|P_1(r-R_\epsilon)-Q_\omega(r)|\lesssim\epsilon\) and \(|P_1'(r-R_\epsilon)-Q_\omega'(r)|\lesssim\epsilon\) for \(r\geq R_\epsilon\).
\end{prop}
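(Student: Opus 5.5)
We compare the first integrals of $Q_\omega$ and $P_{1,2}$ along level sets, using the fact that the radial dissipation term is $O(\epsilon)$. Write
\[
D(r):=\int_r^{\infty}\frac{2}{s}(Q_\omega')^2\,ds .
\]
The energy identity gives $\tfrac12 (Q_\omega')^2=F_\epsilon(Q_\omega)+D(r)$. The autonomous profiles satisfy instead $\tfrac12(P_1')^2=F_\epsilon(P_1)$ and $\tfrac12(P_2')^2=F_\epsilon(P_2)-F_\epsilon(Q_\epsilon^*)$.

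The first step is to show that $D(0)=-F_\epsilon(Q_\omega(0))\lesssim\epsilon$. Since $-F_\epsilon(Q_\epsilon^*)=O(\epsilon^{3/2})$ by the factorization \eqref{eq:F-factorization}, we get $0<D(r)<-F_\epsilon(Q^*_\epsilon)\lesssim\epsilon^{3/2}$ uniformly in $r$, from the chain of inequalities already used in the previous proof.

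In particular, on the level of the phase plane, $Q_\omega$ and $P_j$ have squared slopes differing by $O(\epsilon^{3/2})$ at equal heights. This is even better than $\epsilon$, but the difficulty is converting it into a pointwise estimate in $r$.

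\textbf{Conversion to an $r$-estimate.} Parametrize by height. On the interface region where $Q_\omega\in[\delta,Q^*_\epsilon-\delta]$ for a fixed small $\delta$, the slopes are bounded below by a constant. The inverse functions $r(q)$ of $Q_\omega$ and $x(q)$ of $P_j$ satisfy
\[
|r'(q)-x'(q)|\lesssim \epsilon^{3/2},
\]
because $\sqrt{a+\eta}-\sqrt a=O(\eta)$ when $a\gtrsim1$. Both inverse functions are anchored at $q=\sqrt{3/8}$. The interval of heights has $O(1)$ length, so $|r(q)-R_\epsilon-x(q)|\lesssim\epsilon^{3/2}$ there. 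Transferring back to functions of $r$ gives $|Q_\omega(r)-P_j(r-R_\epsilon)|\lesssim\epsilon^{3/2}$ on the corresponding $O(1)$-length $r$-window. The derivative estimate then follows from the slope identity, together with $|F_\epsilon(Q_\omega)-F_\epsilon(P_j)|\lesssim|Q_\omega-P_j|$.

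\textbf{Plateau and tail.} For $r$ below the window ($Q_\omega$ near $Q^*_\epsilon$, with $r\le R_\epsilon$), both $Q_\omega$ and $P_2$ lie between $Q^*_\epsilon-\delta$ and $Q^*_\epsilon$. Indeed, $P_1<Q_\omega<P_2<Q^*_\epsilon$ by the preceding Proposition, with $P_2$ increasing to $Q^*_\epsilon$ as we go left. We need the difference to be $O(\epsilon)$, not merely $O(\delta)$, and here I would try a linearized comparison near the equilibrium.

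Set $w=P_2(r-R_\epsilon)-Q_\omega(r)\ge0$, defined on $[0,r_\delta]$ where $r_\delta$ is the left end of the window. It satisfies
\[
-w''+F_\epsilon''(\xi)w=-\frac{2}{r}Q_\omega' ,
\]
with $F_\epsilon''(\xi)\ge c>0$ since $\xi$ lies near $Q^*_\epsilon$ by \eqref{eq:Fsecond-upper-equilibrium}. The boundary data are $w(r_\delta)=O(\epsilon^{3/2})$ and $w'(0)=P_2'(-R_\epsilon)=O(e^{-cR_\epsilon})$.

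I also need a bound on the forcing. The energy identity gives $|Q_\omega'|\le\sqrt{2D}$, which is only $O(\epsilon^{3/4})$. Instead I would use that $Q_\omega'$ is itself close to $P_2'$, which is exponentially small away from $R_\epsilon$. Near the origin, \eqref{eq:local-Q-derivative} applies. Together these give the bound $\frac{2}{r}|Q_\omega'|\lesssim\epsilon$ on the region where $r\gtrsim R_\epsilon\sim\epsilon^{-1}$, that is, near the interface.

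In the deep plateau, $r\ll R_\epsilon$, the forcing $\frac{1}{r}|Q_\omega'|$ is bounded by $|Q_\omega'|\le\sqrt{2D}$ times something small. A maximum principle then gives $\|w\|_\infty\lesssim\sup|{\rm forcing}|/c+\text{boundary}$.

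\textbf{Main obstacle.} The hardest step is this a priori control of $Q_\omega'$ in the deep plateau, where $\frac{2}{r}$ is not small. I would handle it by a bootstrap. Start from the crude bound $\|w\|_\infty\le\delta$ and elliptic regularity, which give $|Q_\omega'|\lesssim|P_2'|+|w'|$. Then feed the energy dissipation bound $\int_r^\infty\frac{2}{s}(Q_\omega')^2\,ds\lesssim\epsilon^{3/2}$ into a weighted $L^2$ estimate for $w$ to close at the $O(\epsilon)$ level. The tail $r\ge R_\epsilon$ is handled identically with $P_1$, using $F_\epsilon''(\xi)\ge c$ near $0$ (since $F_\epsilon''(0)\to3/16$) and the fact that $\frac{2}{r}\lesssim\epsilon$ there. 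This is in fact the easier case.
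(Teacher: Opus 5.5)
Your interface step is sound. Parametrizing by height on $[\delta,Q_\epsilon^*-\delta]$, where all slopes are bounded below, and integrating $|r'(q)-x'(q)|$ from the common anchor $q=\sqrt{3/8}$ is a valid alternative to the paper's route (the sandwich $P_1<Q_\omega<P_2$ plus $\sup_{|x|\le C}|P_2-P_1|\lesssim\epsilon$). Your exterior maximum principle, where $2/r\lesssim\epsilon$ makes the forcing $O(\epsilon)$, is also fine.

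The genuine gap is the one you flag yourself: the plateau $0\le r\le r_\delta$. To run the maximum principle for $-w''+F_\epsilon''(\xi)w=-\frac2rQ_\omega'$ you need the forcing to be $O(\epsilon)$ also for $r\sim1$, and nothing you propose delivers this.
\begin{itemize}
\item Appealing to ``$Q_\omega'$ is close to $P_2'$'' is circular, since that is part of the claim.
\item The dissipation bound only gives $\int_1^2(Q_\omega')^2\,dr\lesssim D(1)\lesssim\epsilon$, which controls the forcing near $r\sim1$ only at size $\sqrt\epsilon$.
\item The weighted $L^2$ bootstrap is never specified.
\end{itemize}
The missing idea, used in the paper, is that no a priori bound on $Q_\omega'$ is needed. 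At an interior maximum $r_m$ of $w$ one has $w'(r_m)=0$, i.e.\ $Q_\omega'(r_m)=P_2'(r_m-R_\epsilon)$. Hence, for $r_m\ge1$,
$c_0\,w(r_m)\le-\frac{2}{r_m}P_2'(r_m-R_\epsilon)\lesssim r_m^{-1}e^{-c(R_\epsilon-r_m)}\lesssim R_\epsilon^{-1}$.
The unknown slope is thus replaced by the explicit, exponentially localized heteroclinic slope. Equivalently, in three-dimensional form, $-\Delta w+F_\epsilon''(\xi)w=-\frac2rP_2'(r-R_\epsilon)$.

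Small $r_m$, and the endpoint $r=0$, still require a plateau bound. You can get one, and it in fact closes your own scheme, by applying the comparison lemma to $y=r(Q_\epsilon^*-Q_\omega)$ on all of $[0,r_\delta]$. This is legitimate because $Q_\omega\ge Q_\epsilon^*-\delta$ there, and your slope lower bound gives $r_\delta\ge R_\epsilon-C_\delta$. Together with \eqref{eq:derivative-recovery-identity} it yields $Q_\epsilon^*-Q_\omega(r)\lesssim e^{-c(R_\epsilon-r)}$ and $|Q_\omega'(r)|\lesssim\min\{r,1\}e^{-c(R_\epsilon-r)}$, hence forcing $O(\epsilon)$ on the whole plateau.

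Two further points need fixing.

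First, $-F_\epsilon(Q_\epsilon^*)$ is not $O(\epsilon^{3/2})$. By \eqref{eq:F-factorization}, $(s_\epsilon-\alpha^2)(s_\epsilon-\beta_\epsilon^2)=(s_\epsilon-\frac34)^2-3\epsilon=-3\epsilon+O(\epsilon^2)$, so $-F_\epsilon(Q_\epsilon^*)=\frac38\epsilon+O(\epsilon^2)$. Thus $D$ is of exact order $\epsilon$, consistent with $D(0)\approx\frac{3\sqrt3}{32R_\epsilon}$ and $R_\epsilon\sim\epsilon^{-1}$. Your window estimate therefore gives $O(\epsilon)$, which still suffices. But the claims of $\epsilon^{3/2}$ accuracy must be dropped, and so must the $\epsilon^{3/2}$ input to your bootstrap.

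Second, you never prove the derivative bounds on the plateau and the tail. There the slopes are small, the square root is not Lipschitz, and the slope identity only gives $|Q_\omega'-P'|\lesssim\sqrt\epsilon$. The paper differentiates the equation and applies the maximum principle to $U=Q_\omega'-P'$, using $F_\epsilon''\ge c_0$. Alternatively, once $w$ and the forcing are $O(\epsilon)$, the equation gives $|w''|\lesssim\epsilon$. Then $\sup_I|w'|\lesssim\sup_I|w|+\sup_I|w''|$ on unit intervals $I$ finishes the job.
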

\begin{proof}

We give the argument on $0\le r\le R_\epsilon$ in detail; the exterior
argument is identical after replacing $P_2$ by $P_1$ and using the
far-field estimates.  Write
\[
 P(r):=P_2(r-R_\epsilon),
 \qquad W(r):=P(r)-Q_\omega(r)\ge0.
\]
Then
\begin{equation}\label{eq:W-transition-equation}
 -W''+\frac2rQ_\omega'
 +F_\epsilon'(P)-F_\epsilon'(Q_\omega)=0.
\end{equation}
At $r=R_\epsilon$ one has $W=0$.  At $r=0$, the plateau estimates for
$Q_\omega$ and the explicit left tail of $P_2$ give
$W(0)=O(e^{-cR_\epsilon})$.

Let $r_m\in(0,R_\epsilon)$ be an interior point where $W$ attains its
maximum.  If $r_m\le1$, the same plateau estimates already give
$W(r_m)=O(e^{-cR_\epsilon})$.  Assume henceforth that $r_m>1$.  Since
$W'(r_m)=0$, we have $P'(r_m)=Q_\omega'(r_m)$; since $W''(r_m)\le0$,
\eqref{eq:W-transition-equation} yields
\begin{equation}\label{eq:W-max-inequality}
 c_m W(r_m)\le -\frac2{r_m}P'(r_m),
 \qquad
 c_m:=\frac{F_\epsilon'(P(r_m))-F_\epsilon'(Q_\omega(r_m))}
 {P(r_m)-Q_\omega(r_m)}.
\end{equation}
Choose a fixed small $\delta_0>0$ on which
$F_\epsilon''\ge c_0>0$ uniformly near $Q_\epsilon^*$.  If
$Q_\epsilon^*-Q_\omega(r_m)\le\delta_0$, then both values in the quotient
lie in this convexity interval, so $c_m\ge c_0$.  The explicit heteroclinic
tail gives
$|P'(r_m)|\lesssim e^{-c(R_\epsilon-r_m)}$, and the elementary bound
\[
 \sup_{1\le r\le R_\epsilon}
 \frac{e^{-c(R_\epsilon-r)}}r\lesssim R_\epsilon^{-1}
\]
turns \eqref{eq:W-max-inequality} into
$W(r_m)\lesssim R_\epsilon^{-1}\lesssim\epsilon$.

If instead $Q_\epsilon^*-Q_\omega(r_m)>\delta_0$, the global plateau
estimate implies $|r_m-R_\epsilon|\le C_{\delta_0}$.  Proposition~2.2
then sandwiches $Q_\omega$ between $P_1$ and $P_2$, while the explicit
formulas (equivalently, Proposition~2.4 below) give
\[
 \sup_{|x|\le C_{\delta_0}}|P_2(x)-P_1(x)|\lesssim\epsilon.
\]
Hence $W(r_m)\lesssim\epsilon$ in this case as well.  This proves
\begin{equation}\label{eq:prop23-position-left}
 \sup_{0\le r\le R_\epsilon}|P_2(r-R_\epsilon)-Q_\omega(r)|
 \lesssim\epsilon.
\end{equation}

We next estimate the derivatives.  Set
$U=Q_\omega'-P'=-W'$.  It is important that this estimate is obtained from
the differential equations, not by differentiating
\eqref{eq:prop23-position-left}.  In the part of the interface where
$Q_\epsilon^*-Q_\omega\ge\delta_0$, all relevant slopes are bounded away
from zero.  The first integrals give
\[
 Q_\omega'=-\sqrt{2F_\epsilon(Q_\omega)+2D_\omega(r)},
 \qquad
 P'=-\sqrt{2F_\epsilon(P)-2F_\epsilon(Q_\epsilon^*)},
\]
where
$D_\omega(r)=\int_r^\infty (2/s)(Q_\omega')^2\,ds$.
On every fixed-width interface strip,
$D_\omega(r)=O(R_\epsilon^{-1})=O(\epsilon)$ and
$F_\epsilon(Q_\epsilon^*)=O(\epsilon)$.  Together with
\eqref{eq:prop23-position-left}, the Lipschitz continuity of the square
root away from zero gives $|U|\lesssim\epsilon$ there.

It remains to treat the convex plateau part.  Differentiating
\eqref{eq:W-transition-equation} gives
\begin{equation}\label{eq:U-transition-equation}
 -U''+F_\epsilon''(Q_\omega)U
 =-\frac2{r^2}Q_\omega'+\frac2rQ_\omega''
 +\bigl(F_\epsilon''(P)-F_\epsilon''(Q_\omega)\bigr)P'.
\end{equation}
The already established position estimate, the plateau estimates for
$Q_\omega,Q_\omega'$, and the explicit bounds for $P,P'$ show that the
right-hand side is $O(\epsilon)$ uniformly for $r\ge1$.  At an interior
maximum point of $|U|$, one has $U''U\le0$; since
$F_\epsilon''(Q_\omega)\ge c_0$, equation
\eqref{eq:U-transition-equation} gives $|U|\lesssim\epsilon$.  The
endpoints and $0\le r\le1$ are covered by the exponential plateau bounds.
Consequently,
\[
 \sup_{0\le r\le R_\epsilon}
 |P_2'(r-R_\epsilon)-Q_\omega'(r)|\lesssim\epsilon.
\]

For $r\ge R_\epsilon$, use $P=P_1(r-R_\epsilon)$ and
$W=Q_\omega-P_1\ge0$.  Near the interface the same first-integral
argument applies, now with conserved energy zero for $P_1$; in the small
amplitude region, $F_\epsilon''(0)=\omega$ is uniformly positive and the
analogue of \eqref{eq:U-transition-equation} applies.  The far-field
exponential bounds handle the endpoint at infinity.  This yields
\[
 |P_1(r-R_\epsilon)-Q_\omega(r)|
 +|P_1'(r-R_\epsilon)-Q_\omega'(r)|\lesssim\epsilon,
 \qquad r\ge R_\epsilon,
\]
and completes the proof.
\end{proof}

\subsection{Approximate solutions}
We construct a family of approximate solutions as follows. Define
\begin{align*}
    P_\epsilon(x)=\left\{
    \begin{aligned}
        & P_2(x), x\le 0,\\
        & P_1(x), x\ge 0,\\
        & P_1(0)=P_2(0)=\sqrt{\frac{3}{8}},
    \end{aligned} 
    \right.
\end{align*}
Then $P_\epsilon(x-R_\epsilon)$ is the approximate profile that models the exact radial solution in the transition region. In view of \eqref{asymptotic-Q-near0}, \eqref{eq:local-Q-derivative}, \eqref{asymptotic-Q-nearinfty}, \eqref{asymptotic-Q'-nearinfty}, and Proposition~\ref{error-estimates-P1P2}, we have
\begin{align}\label{error-estimates-Pepsilon}
    |Q_\omega(r)-P_\epsilon(r-R_\epsilon)|\lesssim \min\{\epsilon, e^{-c|r-R_\epsilon|}\},
\end{align}
and
\begin{align}\label{error-estimates-P'epsilon}
    |Q_\omega'(r)-P_\epsilon'(r-R_\epsilon)|\lesssim \min\{\epsilon, e^{-c|r-R_\epsilon|}\}.
\end{align}

\begin{figure}[H]
\centering
\includegraphics[width=0.72\textwidth]{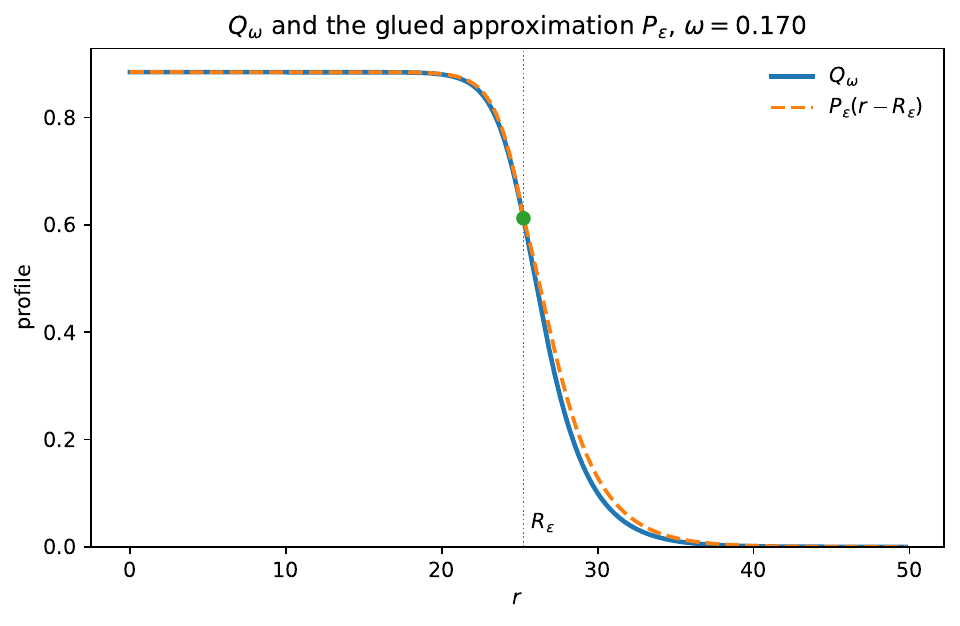}
\caption{ A numerical ground state \(Q_\omega\) and the glued
autonomous approximation \(P_\epsilon(r-R_\epsilon)\) for
\(\omega=0.17\).  Here \(R_\epsilon\) is determined from the numerical
ground state by \(Q_\omega(R_\epsilon)=\sqrt{3/8}\), and
\(P_\epsilon=P_2\) to the left of the interface and \(P_\epsilon=P_1\)
to the right.  The curves are visually almost indistinguishable on the
scale of the plot, consistent with
\eqref{error-estimates-Pepsilon}--\eqref{error-estimates-P'epsilon}.}
\label{fig:Q-vs-Pepsilon}
\end{figure}

Figure~\ref{fig:Q-vs-Pepsilon} shows geometrically what
Proposition~\ref{error-estimates-P1P2} proves analytically: the curvature
term \(2Q_\omega'/r\) produces only a small perturbation of the
one-dimensional autonomous interface once the transition is centered at
\(R_\epsilon\).

Define
\begin{align*}
    P_0(x)
=\sqrt{\frac{3}{8}\left(1-\tanh\!\left(\frac{\sqrt3}{4}x\right)\right)},
\end{align*} 
which is (up to translation) the unique heteroclinic solution satisfying
\begin{align*}
    -P_0''+F_0'(P_0)=0.
\end{align*}
\begin{rem}
After rescaling, \(P_0\) is exactly the kink connecting the two vacua \(m\) and \(0\) in the \(\phi^6\) model in \(P(\phi)_2\) theory:
\begin{equation*}
    \phi''=W_6'(\phi), \quad W_6(\phi)=\phi^2(\phi^2-m^2)^2.
\end{equation*}
\end{rem}
The next proposition shows that $P_0$ is the limiting profile of $P_\epsilon$ as $\epsilon\to0$.

\begin{prop}\label{error-estimates-P0}
We have
$$
\|P_\epsilon-P_0\|_{L^\infty}\lesssim\epsilon,
$$
and
$$
\|P'_\epsilon-P'_0\|_{L^\infty}\lesssim \epsilon.
$$
\end{prop}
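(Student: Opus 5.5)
The plan is to work directly with the explicit formulas. Each of $P_1$, $P_2$, $P_0$ can be written as a rational function of a single exponential, with coefficients depending smoothly on $\epsilon$, and the normalization $P(0)=\sqrt{3/8}$ forces these coefficients to be $O(\epsilon)$-close to those of $P_0$. Since $1-\tanh y=2/(1+e^{2y})$, we have
\[
P_0(x)^2=\frac{3/4}{1+e^{\frac{\sqrt3}{2}x}} .
\]
We treat $x\ge0$ (where $P_\epsilon=P_1$) and $x\le0$ (where $P_\epsilon=P_2$) separately. Because $P_0$ and $P_\epsilon$ stay above a positive constant on any half-line $x\le M$, it suffices to estimate the squares $P_\epsilon^2-P_0^2$ there. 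For large positive $x$ both profiles are exponentially small and the square root must be handled directly.

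\emph{Region $x\ge0$.} Write $k=\sqrt{3/16-\epsilon}=\tfrac{\sqrt3}{4}+O(\epsilon)$ and expand the $\cosh$:
\[
\sqrt{\tfrac{16\epsilon}{3}}\cosh(2k(x-x_1))=Ae^{2kx}+Be^{-2kx},
\qquad AB=\tfrac{4\epsilon}{3}.
\]
The condition $P_1(0)=\sqrt{3/8}$ gives $A+B=1-\tfrac{32}{3}\epsilon$. The condition $P_1'(0)<0$ means $x_1<0$, which selects the root with $A=1+O(\epsilon)$ and $B=O(\epsilon)$. Hence
\[
P_1(x)=\Bigl(\tfrac34-4\epsilon\Bigr)^{1/2}\bigl(1+Ae^{2kx}+Be^{-2kx}\bigr)^{-1/2},\qquad x\ge 0,
\]
and we compare this term by term with $P_0(x)=(3/4)^{1/2}(1+e^{\sqrt3x/2})^{-1/2}$.

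The replacements $\tfrac34-4\epsilon\to\tfrac34$, $A\to1$ and $Be^{-2kx}\to0$ each cost $O(\epsilon)\,(1+e^{\sqrt3 x/2})^{-1/2}$, using $Be^{-2kx}\le B$ on $x\ge0$. The frequency change $2k\to\sqrt3/2$ is handled by the mean value theorem in the exponent. This costs at most
\[
O(\epsilon)\,x\,e^{\frac{\sqrt3}{2}x}\bigl(1+e^{\frac{\sqrt3}{2}x}\bigr)^{-3/2}\lesssim \epsilon\, x e^{-cx}\lesssim\epsilon .
\]
Differentiating the closed forms gives the analogous bound for $P_1'-P_0'$, since every derivative only produces bounded factors such as $2kAe^{2kx}/(1+\cdots)$.

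\emph{Region $x\le0$.} Use $\sinh^2=\cosh^2-1$ to write
\[
P_2^2=(Q_\epsilon^*)^2\Bigl(1-\frac{a+1}{a+\cosh^2(\kappa(x-x_2))}\Bigr),\qquad a=\frac{(Q_\epsilon^*)^2}{2(Q_\epsilon^*)^2-\tfrac32}.
\]
Since $(Q_\epsilon^*)^2=\tfrac34+2\epsilon+O(\epsilon^2)$, we get $a\simeq\tfrac{3}{16\epsilon}$. Expanding $\cosh^2$ gives
\[
P_2(x)^2=(Q_\epsilon^*)^2\Bigl(1-\frac{1}{1+D+Ce^{2\kappa x}+Ee^{-2\kappa x}}\Bigr),
\]
where $C$ is fixed by $P_2(0)=\sqrt{3/8}$, so $C=1+O(\epsilon)$. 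The other coefficients satisfy $D=O(\epsilon)$ and $E=O(\epsilon^2)$; this follows from $x_2>0$ together with $e^{2\kappa x_2}\sim a$.

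Meanwhile
\[
P_0^2=\tfrac34\Bigl(1-\frac{1}{1+e^{-\frac{\sqrt3}{2}x}}\Bigr),
\]
which is the same form once we note that $\kappa=\tfrac{\sqrt3}{4}+O(\epsilon)$. On $x\le0$, the term $Ce^{2\kappa x}$ is the small one and $Ee^{-2\kappa x}$ is the growing one. This mirrors the $x\ge0$ case with the roles of the two exponentials reversed, and the same term-by-term comparison applies. The fact that $P_2,P_0\ge\sqrt{3/8}$ on $x\le0$ lets us pass from squares to the functions themselves, and derivatives are handled as before.

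\emph{Main obstacle.} The delicate step is the bookkeeping for $P_2$. We must verify that the fixed normalization really gives $C=1+O(\epsilon)$ and $E=O(\epsilon^2)$, uniformly in $\epsilon$. The danger is the factor $Ee^{-2\kappa x}$ as $x\to-\infty$: it grows, and we need it to be dominated by the leading exponential with only an $O(\epsilon)$ relative change. I would check this first by computing $x_2$ explicitly from $P_2(0)=\sqrt{3/8}$, i.e.\ by solving a quadratic in $e^{2\kappa x_2}$ exactly as was done for $A,B$ in the case $x\ge0$.
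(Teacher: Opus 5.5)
Your strategy is the paper's. Its proof is a one-line appeal to the explicit formulas for $P_1,P_2$ and the mean value theorem in $\epsilon$. Rewriting the profiles in terms of the exponential coefficients $A,B$ (resp.\ $C,D,E$), which depend smoothly on $\epsilon$, is a clean way to make that uniform, since the phase shifts $x_1,x_2$ are of size $\log(1/\epsilon)$. The half-line $x\ge0$ is fine. The weight $(1+e^{\sqrt3x/2})^{-1/2}$ you attach to the errors is not literally right, because $2k<\sqrt3/2$. For an $L^\infty$ bound, though, you only need these factors to be at most $1$.

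On $x\le0$, however, you have interchanged the two exponential coefficients, and this interchange is what creates your ``main obstacle''. Expanding,
\[
\frac{a+\cosh^2(\kappa(x-x_2))}{a+1}=1-\frac{1}{2(a+1)}+\frac{e^{-2\kappa x_2}}{4(a+1)}\,e^{2\kappa x}+\frac{e^{2\kappa x_2}}{4(a+1)}\,e^{-2\kappa x},
\]
so $D=-\frac{1}{2(a+1)}$, $C=\frac{e^{-2\kappa x_2}}{4(a+1)}$, $E=\frac{e^{2\kappa x_2}}{4(a+1)}$, and $CE=\frac{1}{16(a+1)^2}=O(\epsilon^2)$. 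Since $x_2>0$ we have $E>C$, and the normalization $P_2(0)=\sqrt{3/8}$ gives $C+E=1+O(\epsilon)$. Hence $E=1+O(\epsilon)$ and $C=O(\epsilon^2)$. This is the opposite of what you wrote, and it is the only assignment consistent with your own remark $e^{2\kappa x_2}\sim a$.

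With your labels the argument fails. Near $x=0$ you would get $P_2^2\approx\frac34e^{2\kappa x}/(1+e^{2\kappa x})$, which is increasing. That is not of the same form as $P_0^2=\frac34e^{-\frac{\sqrt3}{2}x}/(1+e^{-\frac{\sqrt3}{2}x})$. Moreover, the term $O(\epsilon^2)e^{-2\kappa x}$ would overtake $Ce^{2\kappa x}$ once $x\lesssim-\frac{1}{2\kappa}\log\frac1\epsilon$, so the term-by-term comparison would genuinely break down.

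With the correct labels there is no obstacle. On $x\le0$ the growing exponential $e^{-2\kappa x}$ carries the coefficient $E=1+O(\epsilon)$ and matches $e^{-\frac{\sqrt3}{2}x}$. The remaining errors are small: $Ce^{2\kappa x}\le C=O(\epsilon^2)$, $D=O(\epsilon)$, and $(Q_\epsilon^*)^2-\frac34=O(\epsilon)$. The frequency change $2\kappa\to\frac{\sqrt3}{2}$ costs $\lesssim\epsilon|x|e^{-c|x|}$. This is the exact mirror of $x\ge0$, where the growing exponential has coefficient $A=1+O(\epsilon)$ and the decaying one has $B=O(\epsilon)$. The passage to square roots and derivatives via $P_2,P_0\ge\sqrt{3/8}$ then goes through as you describe.
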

\begin{proof}
The estimates follow directly by differentiating the explicit formulas
for \(P_1\) and \(P_2\) on the two half-lines and applying the mean-value
theorem in \(\epsilon\).  The midpoint normalization is the same for all
three profiles, and the resulting bounds are uniform up to the one-sided
derivatives at \(x=0\).\end{proof}

Combining \eqref{error-estimates-Pepsilon}, \eqref{error-estimates-P'epsilon}, and Proposition~\ref{error-estimates-P0}, we obtain
\begin{cor}\label{cor:P0-approx}
We have
\begin{align*}
    \|Q_\omega(r)-P_0(r-R_\epsilon)\|_{L^\infty}\lesssim\epsilon,
\end{align*}
and
\begin{align*}
    \|Q_\omega'(r)-P_0'(r-R_\epsilon)\|_{L^\infty}\lesssim \epsilon.
\end{align*}
\end{cor}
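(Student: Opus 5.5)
The plan is to insert the glued profile $P_\epsilon(r-R_\epsilon)$ between $Q_\omega(r)$ and $P_0(r-R_\epsilon)$ and apply the triangle inequality:
\[
 |Q_\omega(r)-P_0(r-R_\epsilon)|\le |Q_\omega(r)-P_\epsilon(r-R_\epsilon)|+|P_\epsilon(r-R_\epsilon)-P_0(r-R_\epsilon)|,
\]
and similarly for the derivatives. Here $r$ ranges over $[0,\infty)$, so the translated variable $x=r-R_\epsilon$ ranges over $[-R_\epsilon,\infty)$, a subset of $\R$. Therefore the second term is bounded by $\|P_\epsilon-P_0\|_{L^\infty(\R)}\lesssim\epsilon$, which is Proposition~\ref{error-estimates-P0}. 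The first term is bounded uniformly in $r$ by $\min\{\epsilon,e^{-c|r-R_\epsilon|}\}\le C\epsilon$, by \eqref{error-estimates-Pepsilon}. Adding the two gives the first estimate of the corollary. The derivative estimate follows in exactly the same way from \eqref{error-estimates-P'epsilon} and the second bound of Proposition~\ref{error-estimates-P0}.

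The only step needing care is the derivative estimate at the gluing point $x=0$, where $P_\epsilon$ is built from two different profiles. At $x=0$ the one-sided derivatives $P_1'(0)$ and $P_2'(0)$ need not agree. I will interpret the derivative bounds as holding almost everywhere, or with one-sided derivatives at $x=0$. Both \eqref{error-estimates-P'epsilon} and Proposition~\ref{error-estimates-P0} are stated uniformly up to the one-sided derivatives there. Since $Q_\omega'$ and $P_0'$ are continuous, the final bound holds at every point, including $r=R_\epsilon$. As a consistency check, both one-sided limits are $O(\epsilon)$-close to $P_0'(0)$, so the jump $|P_1'(0)-P_2'(0)|$ is itself $O(\epsilon)$. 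This is also consistent with \eqref{eq:slope-order-R}.

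I do not expect a genuine obstacle, since the statement is a direct combination of the earlier results. The main point to verify is that all implicit constants are independent of $\epsilon$ and $r$. This holds because each cited estimate is uniform for all sufficiently small $\epsilon$.
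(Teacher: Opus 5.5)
Your proposal is correct and matches the paper's argument: the corollary follows from the triangle inequality through $P_\epsilon(r-R_\epsilon)$, combining \eqref{error-estimates-Pepsilon} and \eqref{error-estimates-P'epsilon} with Proposition~\ref{error-estimates-P0}. Your remark on one-sided derivatives at the gluing point agrees with how the paper treats $x=0$ in Proposition~\ref{error-estimates-P0}.
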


\subsection{Asymptotics of the phase translation $R_\epsilon$}
In this subsection, we derive a sharper asymptotic formula for \(R_\epsilon\).

\begin{prop}\label{asymptotic-R}Given $R_\epsilon$ defined as \eqref{eq:Redef}, we have
    $$R_\epsilon=\frac{\sqrt{3}}{4\epsilon}+\mathcal{O}(1).$$
\end{prop}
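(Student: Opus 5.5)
The plan is to use the energy (Pohozaev-type) dissipation identity evaluated at the origin:
\[
 -F_\epsilon(Q_\omega(0))=\int_0^{\infty}\frac{2}{r}(Q_\omega'(r))^2\,dr .
\]
Both sides will be computed to leading order. On the left, $Q_\omega(0)$ is exponentially close to $Q_\epsilon^*$: combine Corollary~\ref{cor:P0-approx} with the plateau estimates \eqref{error-estimates-Pepsilon}, or equivalently the $P_2$ comparison on $[0,R_\epsilon]$ with the explicit left tail of $P_2$. Since $F_\epsilon'(Q_\epsilon^*)=0$, this gives $F_\epsilon(Q_\omega(0))=F_\epsilon(Q_\epsilon^*)+O(e^{-cR_\epsilon})$. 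The explicit expansion of $F_\epsilon(Q_\epsilon^*)$ follows from $s_\epsilon=(Q_\epsilon^*)^2=\tfrac34+2\epsilon+O(\epsilon^2)$ and the factorization \eqref{eq:F-factorization}:
\[
 F_\epsilon(Q_\epsilon^*)=\frac{s_\epsilon}{6}(s_\epsilon-\alpha^2)(s_\epsilon-\beta_\epsilon^2)=\frac{1}{8}\bigl(\sqrt{3\epsilon}\bigr)\bigl(-\sqrt{3\epsilon}\bigr)+O(\epsilon^{3/2})=-\frac{3}{8}\epsilon+O(\epsilon^{3/2}).
\]
So the left side equals $\frac38\epsilon+O(\epsilon^{3/2})$.

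On the right, $Q_\omega'$ is concentrated in an $O(1)$ window around $R_\epsilon$. I split the integral into $|r-R_\epsilon|\le R_\epsilon/2$ and its complement. On the complement, \eqref{error-estimates-P'epsilon} together with the exponential decay of $P_\epsilon'$ makes the contribution $O(e^{-cR_\epsilon})$; near $r=0$ I use \eqref{eq:local-Q-derivative}, which also controls the $1/r$ singularity. On the main part, write $\frac{2}{r}=\frac{2}{R_\epsilon}+O(|r-R_\epsilon|R_\epsilon^{-2})$ and replace $Q_\omega'(r)$ by $P_0'(r-R_\epsilon)$. Corollary~\ref{cor:P0-approx} gives an $O(\epsilon)$ sup bound, but that alone is not integrable over a window of width $R_\epsilon$. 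So I will use the pointwise bound $|Q_\omega'-P_0'(\cdot-R_\epsilon)|\lesssim \min\{\epsilon,e^{-c|r-R_\epsilon|}\}$, which comes from \eqref{error-estimates-P'epsilon} together with Proposition~\ref{error-estimates-P0} and the exponential decay of $P_0',P_\epsilon'$. With that bound,
\[
 \int (Q_\omega')^2-(P_0')^2\,dr=O(\epsilon\log\epsilon^{-1}).
\]
The main term is $\frac{2}{R_\epsilon}\int_{\R}(P_0')^2dx$. By the zero-energy first integral of the limiting kink, $(P_0')^2=2F_0(P_0)$, so
\[
 \int_{\R}(P_0')^2dx=\int_0^{\sqrt{3/4}}\sqrt{2F_0(q)}\,dq=\int_0^{\sqrt{3/4}}\frac{q}{\sqrt3}\Bigl(\frac34-q^2\Bigr)dq=\frac{1}{\sqrt3}\cdot\frac{9}{64}=\frac{3\sqrt3}{64}.
\]
Equating the two sides gives $\frac{3\sqrt3}{32R_\epsilon}(1+o(1))=\frac38\epsilon$, hence $R_\epsilon=\frac{\sqrt3}{4\epsilon}(1+o(1))$. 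This reproduces the claimed constant, so the method is consistent.

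The main obstacle is sharpening the relative error from $o(1)$ to $O(\epsilon)$, since that is what the claimed $\mathcal O(1)$ error in $R_\epsilon$ requires. The crude errors above are $O(\epsilon^{3/2})$ from $F_\epsilon(Q_\epsilon^*)$ and $O(\epsilon\log\epsilon^{-1})$ from the derivative replacement, and both are too big.

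For the first, I would not expand in $\sqrt\epsilon$. I would compute $F_\epsilon(Q_\epsilon^*)$ exactly: using $\omega=s-s^2$, one gets $F_\epsilon(Q_\epsilon^*)=\frac{s^2}{2}-\frac{s^3}{2}-\frac{s^2}{4}+\frac{s^3}{6}$, which is analytic in $\epsilon$ and equals $-\frac38\epsilon+O(\epsilon^2)$. I expect the $\epsilon^{3/2}$ terms to cancel, and will verify this.

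For the second, I would compare $Q_\omega'$ with $P_\epsilon'(\cdot-R_\epsilon)$ rather than with $P_0'$. The quantity $\int(P_\epsilon')^2$ is explicit, $\int\sqrt{2F_\epsilon}$ on $P_1$ and $\int\sqrt{2F_\epsilon-2F_\epsilon(Q^*_\epsilon)}$ on $P_2$, and it differs from $\frac{3\sqrt3}{64}$ by $O(\epsilon)$. The remaining difference $\int\frac2r\bigl((Q_\omega')^2-(P_\epsilon')^2\bigr)$ carries the prefactor $2/R_\epsilon\sim\epsilon$. So it suffices that $\int|Q_\omega'-P_\epsilon'|\,|P_\epsilon'|\,dr=O(1)$, and that holds because $|P'_\epsilon|$ decays exponentially away from $R_\epsilon$ while the difference is bounded.

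The term $O(|r-R_\epsilon|R_\epsilon^{-2})$ integrated against the exponentially localized weight is $O(\epsilon^2)$. Combining everything gives $\frac{3\sqrt3}{32R_\epsilon}+O(\epsilon^2)=\frac38\epsilon+O(\epsilon^2)$, and therefore $R_\epsilon=\frac{\sqrt3}{4\epsilon}+O(1)$.
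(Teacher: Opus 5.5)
Your overall argument is the paper's. It uses the identity $-F_\epsilon(Q_\omega(0))=\int_0^\infty\frac2r(Q_\omega')^2\,dr$, exponential localization of $Q_\omega'$ near $R_\epsilon$, and the replacement of $2/r$ by $2/R_\epsilon$ at cost $\mathcal O(\epsilon^2)$. It also uses the value $\int_{\R}(P_0')^2=\frac{3\sqrt3}{64}$ (your first-integral computation of it is correct) and $F_\epsilon(Q_\omega(0))=-\frac38\epsilon+\mathcal O(\epsilon^2)$. On that last point there is no cancellation to check. By \eqref{eq:F-factorization}, $(s_\epsilon-\alpha^2)(s_\epsilon-\beta_\epsilon^2)=(s_\epsilon-\frac34)^2-3\epsilon$ exactly, and $s_\epsilon-\frac34=\mathcal O(\epsilon)$. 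Hence $F_\epsilon(Q_\epsilon^*)=-\frac{s_\epsilon}{2}\epsilon+\mathcal O(\epsilon^2)=-\frac38\epsilon+\mathcal O(\epsilon^2)$. The $\epsilon^{3/2}$ term only appeared because you expanded the two factors separately.

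The step that fails as written is your final error bound. You say it suffices that $\int|Q_\omega'-P_\epsilon'|\,|P_\epsilon'|\,dr=\mathcal O(1)$ because the difference is bounded. After multiplying by $2/R_\epsilon\sim\epsilon$, that is an $\mathcal O(\epsilon)$ error, the same size as the main term $\frac{3\sqrt3}{32R_\epsilon}$. It would not give the leading constant, let alone the $\mathcal O(\epsilon^2)$ you then use. What you need, and what is true, is that this integral is $\mathcal O(\epsilon)$. Write $|(Q_\omega')^2-(P')^2|\le|Q_\omega'-P'|\,(|Q_\omega'|+|P'|)$, bound the first factor uniformly by $\mathcal O(\epsilon)$, and bound the second by $e^{-c|r-R_\epsilon|}$.

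This works verbatim with $P'=P_0'(\cdot-R_\epsilon)$ and Corollary~\ref{cor:P0-approx}. It is exactly the paper's bound $|(Q_\omega')^2-(P_0'(r-R_\epsilon))^2|\lesssim\epsilon e^{-c|r-R_\epsilon|}$. So your $\mathcal O(\epsilon\log\epsilon^{-1})$ was only an artifact of bounding $|Q_\omega'+P_0'|$ by a constant, and the detour through $P_\epsilon$ and $\int(P_\epsilon')^2$ is unnecessary.

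One smaller point concerns the origin. \eqref{eq:local-Q-derivative} is stated with a fixed $r_0$ and gives neither smallness in $\epsilon$ nor vanishing at $r=0$, so it cannot control $\int_0^1\frac2r(Q_\omega')^2\,dr$. Instead use $Q_\omega'(r)=r^{-2}\int_0^r s^2F_\epsilon'(Q_\omega(s))\,ds$, together with $F_\epsilon'(Q_\epsilon^*)=0$ and the $e^{-cR_\epsilon}$-closeness of $Q_\omega$ to $Q_\epsilon^*$ there. This gives $|Q_\omega'(r)|\lesssim re^{-cR_\epsilon}$ on $[0,1]$, as the paper does.
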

\begin{proof}
Recall the exact identity
\begin{equation}\label{formula-FQ0}
F_\epsilon(Q_\omega(0))
=
-\int_0^{+\infty}\frac{2}{r}(Q_\omega'(r))^2\,dr.
\end{equation}
We compare the weighted dissipation integral on the right-hand side with the same quantity computed on the limiting interface $P_0$.  The proof is organized so that every approximation error is of order $\mathcal O(\epsilon^2)$.

Fix $A>0$ large and set
\[
L_\epsilon:=A\log \frac1\epsilon .
\]
We split the half-line into three regions:
\[
[0,R_\epsilon-L_\epsilon],\qquad [R_\epsilon-L_\epsilon,R_\epsilon+L_\epsilon],\qquad [R_\epsilon+L_\epsilon,+\infty).
\]
The middle interval is the transition zone; the two outer intervals are tails.

\smallskip
\noindent
We begin with the right tail. Since
\[
|Q_\omega'(r)|\le Ce^{-c(r-R_\epsilon)},
\qquad r\ge R_\epsilon,
\]
we have
\begin{align*}
\int_{R_\epsilon+L_\epsilon}^{+\infty}\frac{2}{r}(Q_\omega'(r))^2\,dr
&\le \frac{2}{R_\epsilon+L_\epsilon}\int_{R_\epsilon+L_\epsilon}^{+\infty}(Q_\omega'(r))^2\,dr\\
&\lesssim \frac1{R_\epsilon}\int_{L_\epsilon}^{+\infty}e^{-2cs}\,ds
\lesssim \frac{e^{-2cL_\epsilon}}{R_\epsilon}
\lesssim \epsilon^2,
\end{align*}
provided $A$ is chosen sufficiently large.

The left tail requires a slightly more careful split because of the weight $1/r$.  On the interval $[1,R_\epsilon-L_\epsilon]$, Proposition~\ref{error-estimates-P1P2} gives
\[
|Q_\omega'(r)|\le Ce^{-c(R_\epsilon-r)},
\]
and therefore
\begin{align}
\int_1^{R_\epsilon-L_\epsilon}\frac{2}{r}(Q_\omega'(r))^2\,dr
&\lesssim \int_1^{R_\epsilon-L_\epsilon} e^{-2c(R_\epsilon-r)}\,dr
\lesssim e^{-2cL_\epsilon}
\lesssim \epsilon^2 .\label{eq:left-tail-middle-final}
\end{align}
Near the origin, using the fact that
\[
Q_\omega'(r)=\frac{1}{r^2}\int_0^r s^2F_\epsilon'(Q_\omega(s)) ds,
\]
we have
\[
|Q_\omega'(r)|\lesssim r\,e^{-cR_\epsilon},
\qquad 0\le r\le1,
\]
and hence
\begin{align}
\int_0^1\frac{2}{r}(Q_\omega'(r))^2\,dr
&\lesssim e^{-2cR_\epsilon}\int_0^1 r\,dr
\lesssim e^{-2cR_\epsilon}
\lesssim \epsilon^2 .\label{eq:left-tail-origin-final}
\end{align}
Combining \eqref{eq:left-tail-middle-final} and \eqref{eq:left-tail-origin-final}, we obtain
\begin{equation*}
\int_0^{R_\epsilon-L_\epsilon}\frac{2}{r}(Q_\omega'(r))^2\,dr\lesssim \epsilon^2 .
\end{equation*}

Exactly the same exponential localization gives
\begin{equation}\label{eq:P0-tail-small-final}
\int_{|s|\ge L_\epsilon}(P_0'(s))^2\,ds\lesssim \epsilon^2 .
\end{equation}

On the interval $|r-R_\epsilon|\le L_\epsilon$, one has $r\sim R_\epsilon$, so
\[
\left|\frac2r-\frac2{R_\epsilon}\right|
=
\frac{2|r-R_\epsilon|}{rR_\epsilon}
\lesssim \frac{|r-R_\epsilon|}{R_\epsilon^2}.
\]
Hence
\begin{align*}
\int_{R_\epsilon-L_\epsilon}^{R_\epsilon+L_\epsilon}
\left|\frac2r-\frac2{R_\epsilon}\right|(Q_\omega'(r))^2\,dr
&\lesssim \frac1{R_\epsilon^2}
\int_{R_\epsilon-L_\epsilon}^{R_\epsilon+L_\epsilon}|r-R_\epsilon|(Q_\omega'(r))^2\,dr\\
&\lesssim \frac1{R_\epsilon^2}\int_{R_\epsilon-L_\epsilon}^{R_\epsilon+L_\epsilon}|r-R_\epsilon|e^{-c|r-R_\epsilon|} dr\\
&\lesssim \epsilon^2.
\end{align*}
Therefore
\begin{equation*}
\int_{R_\epsilon-L_\epsilon}^{R_\epsilon+L_\epsilon}\frac{2}{r}(Q_\omega'(r))^2\,dr
=
\frac{2}{R_\epsilon}
\int_{R_\epsilon-L_\epsilon}^{R_\epsilon+L_\epsilon}(Q_\omega'(r))^2\,dr
+\mathcal O(\epsilon^2).
\end{equation*}

We now replace $Q_\omega'$ by the limiting profile $P_0'(\cdot-R_\epsilon)$.
Corollary~\ref{cor:P0-approx} gives
\[
\bigl|(Q_\omega'(r))^2-(P_0'(r-R_\epsilon))^2\bigr|
\lesssim \epsilon e^{-c|r-R_\epsilon|}.
\]
Hence
\begin{align*}
\frac2{R_\epsilon}
\int_{R_\epsilon-L_\epsilon}^{R_\epsilon+L_\epsilon}
\bigl|(Q_\omega'(r))^2-(P_0'(r-R_\epsilon))^2\bigr|\,dr
&\lesssim \frac{\epsilon}{R_\epsilon}\int_{\mathbb R}e^{-c|s|}\,ds\\
&\lesssim \epsilon^2.
\end{align*}
Using \eqref{eq:P0-tail-small-final}, we therefore obtain
\begin{equation}\label{eq:dissipation-compare-final}
\left|
\int_0^\infty \frac{2}{r}(Q_\omega'(r))^2\,dr
-
\frac{2}{R_\epsilon}\int_{-\infty}^{+\infty}(P_0'(s))^2\,ds
\right|
\lesssim \epsilon^2 .
\end{equation}

The remaining integral is explicit.  From
\[
P_0(s)=\sqrt{\frac38\Bigl(1-\tanh\!\bigl(\tfrac{\sqrt3}{4}s\bigr)\Bigr)}
\]
one computes directly that
\[
\int_{-\infty}^{+\infty}(P_0'(s))^2\,ds=\frac{3\sqrt3}{64}.
\]
Substituting this into \eqref{eq:dissipation-compare-final} yields
\begin{equation*}
-\int_0^\infty \frac{2}{r}(Q_\omega'(r))^2\,dr
=
-\frac{3\sqrt3}{32R_\epsilon}+\mathcal O(\epsilon^2).
\end{equation*}
By \eqref{formula-FQ0}, this is equivalent to
\begin{equation}\label{eq:FQ0-vs-R-final-2}
F_\epsilon(Q_\omega(0))
=
-\frac{3\sqrt3}{32R_\epsilon}+\mathcal O(\epsilon^2).
\end{equation}

Since $Q_\omega(0)$ is exponentially close to $Q_\epsilon^*$, we have
\begin{equation}\label{eq:FQ0-expansion-final}
F_\epsilon(Q_\omega(0))
=F_\epsilon(Q_\epsilon^*)+\mathcal{O}(e^{-cR_\epsilon})
=
-\frac38\epsilon+\mathcal{O}(\epsilon^2).
\end{equation}
Comparing \eqref{eq:FQ0-vs-R-final-2} and \eqref{eq:FQ0-expansion-final}, we obtain
\[
\frac38\epsilon+\mathcal O(\epsilon^2)
=
\frac{3\sqrt3}{32R_\epsilon},
\]
thus we conclude
\[
R_\epsilon=\frac{\sqrt3}{4\epsilon}+\mathcal O(1)
\]
as claimed.
\end{proof}

\section{Discrete Spectrum of $L_+$}

In this section, we study the eigenvalues of $L^+$ below the bottom  $\omega$ of its continuous
spectrum.  Under the scalar Klein--Gordon evolution
\eqref{eq:scalar-kg-linearization}, positive such eigenvalues are localized
oscillatory modes, the unique negative radial eigenvalue is the unstable
scalar direction, and the zero eigenvalue in the $\ell=1$ sector is the
translation mode.

\subsection{Spherical harmonic decomposition of $L_+^{(\ell)}$}
Since $Q_\omega$ is radial, we may analyze the spectrum of $L_+$ by means of spherical harmonic decomposition. Given $f\in L^2(\mathbb{R}^3)$, let $r=|x|$ and $\theta=x/|x|$ denote the radial and angular variables, respectively. Then
\begin{align*}
f(r,\theta)=\sum_{\ell=0}^\infty\sum_{m=-\ell}^{\ell} f_{\ell m}(r)Y_{\ell m}(\theta),
\quad
\text{with}~~~~f_{\ell m}(r)=\int_{\mathbb S^{2}} f(r,\theta)\overline{Y_{\ell m}(\theta)}d\theta,
\end{align*}
where the spherical harmonics $Y_{\ell m}\in L^2(\mathbb{S}^2)$ are the eigenfunctions of the Laplace--Beltrami operator on the sphere:
$$
-\Delta_{\mathbb S^{2}}Y_{\ell m}=\ell(\ell+1)Y_{\ell m},\qquad \ell=0,1,2,\dots,\quad m=-\ell,\dots,\ell,
$$
which form an orthonormal basis of $L^{2}(\mathbb S^{2})$.
In particular, the spherical harmonic decomposition satisfies the Parseval identity:
$$
\|f\|_{L^{2}(\mathbb R^{3})}^{2}
=\sum_{\ell,m}\int_{0}^{\infty}|f_{\ell m}(r)|^{2} r^{2}dr.
$$
Under this decomposition, the linearized operator \(L_+\) decomposes as
$$L_+=\bigoplus_{\ell=0}^{\infty}L_+^{(\ell)},$$
with
\begin{align*}
L_+^{(\ell)}= -\partial_r^2-\frac{2}{r}\partial_r+ \omega - 3Q_\omega^2 + 5Q_\omega^4 + \frac{\ell(\ell+1)}{r^{2}}
\quad\text{on }L^2(0,+\infty;r^2\,dr),
\end{align*}
in the sense that 
\begin{align*}
    L_+f=\sum_{\ell=0}^\infty\sum_{m=-\ell}^{\ell} (L_+^{(\ell)}f_{\ell m})(r)Y_{\ell m}(\theta).
\end{align*}

\subsection{Eigenvalues of $L_+^{(\ell)}$}
Let 
\begin{align*}
    H^{(\ell)}=-\partial_r^2+ \omega - 3Q_\omega^2 + 5Q_\omega^4 + \frac{\ell(\ell+1)}{r^{2}}.
\end{align*}
We have \(rL_+^{(\ell)}f=H^{(\ell)}(rf)\). Moreover, \(H^{(\ell)}\) is self-adjoint on \(L^2(0,\infty;dr)\) as the Friedrichs realization with the regular endpoint condition at \(r=0\), namely
\begin{align*}
  \operatorname{Dom}(H^{(\ell)})
    =\left\{u\in H_0^1(0,\infty):
    -u''+\frac{\ell(\ell+1)}{r^2}u\in L^2(0,\infty)\right\}.
\end{align*}
The spectra of \(L_+^{(\ell)}\) and \(H^{(\ell)}\) are therefore identical. We first prove the following spectral property of \(L_+^{(\ell)}\):
\begin{prop}
For each $\ell\ge 0$, $L_+^{(\ell)}$ has at most one eigenvalue in $(-\infty, \omega)$.
\end{prop}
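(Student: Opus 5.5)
Since $H^{(\ell)}=H^{(0)}+\ell(\ell+1)r^{-2}$ is monotone in $\ell$ (in the form sense, on the common form domain $H_0^1(0,\infty)$ restricted to $u(0)=0$), min--max gives that the $k$-th eigenvalue below $\omega$ of $H^{(\ell)}$ is nondecreasing in $\ell$. Hence the number of eigenvalues of $H^{(\ell)}$ in $(-\infty,\omega)$ is at most that of $H^{(0)}$, and it suffices to show that $H^{(0)}=-\partial_r^2+F_\epsilon''(Q_\omega(r))$ on $L^2(0,\infty)$ with Dirichlet condition at $0$ has at most one eigenvalue below $\omega$. Equivalently, by min--max, we must show that the second min--max value $\mu_2(H^{(0)})$ is $\ge\omega$. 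The case of fixed $\epsilon$ not small presumably needs separate treatment; the plan targets small $\epsilon$, which is the regime of the paper.

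To this end, shift variables $x=r-R_\epsilon$, so $H^{(0)}$ becomes $-\partial_x^2+F_\epsilon''(Q_\omega(x+R_\epsilon))$ on $(-R_\epsilon,\infty)$ with Dirichlet condition at $x=-R_\epsilon$. By Corollary~\ref{cor:P0-approx} the potential is within $O(\epsilon)$ in $L^\infty$ of $V_0(x):=F_0''(P_0(x))$. The limiting operator $H_0=-\partial_x^2+V_0$ on $L^2(\mathbb R)$ is the linearization about the $\phi^6$ kink. Its only eigenvalue is $0$ (eigenfunction $P_0'$, positive, so it is the ground state), and its essential spectrum is $[\min(V_0(-\infty),V_0(+\infty)),\infty)=[\tfrac3{16},\infty)$ with $V_0(-\infty)=F_0''(\sqrt{3/4})=\tfrac34$ and $V_0(+\infty)=\tfrac3{16}$. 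The fact that there is no second eigenvalue below $3/16$ can be checked directly: after the change of variables $\xi=\tanh(\tfrac{\sqrt3}{4}x)$ this is a hypergeometric (P\"oschl--Teller type) problem, or one may cite the known $\phi^6$ kink spectrum. Thus $\mu_2(H_0)=\tfrac3{16}$ equals the bottom of the essential spectrum, with no embedded gap.

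The main obstacle is that $\omega=\tfrac3{16}-\epsilon$ lies just $\epsilon$ below the threshold $\tfrac3{16}$ of $H_0$, while the potential perturbation is itself $O(\epsilon)$. Hence a naive $L^\infty$ perturbation bound only gives $\mu_2(H^{(0)})\ge \tfrac3{16}-C\epsilon$, which is not enough. I would instead compare exactly on the right. For $x\ge0$ (exterior), Proposition~\ref{error-estimates-P1P2} and the explicit $P_1$ give $F_\epsilon''(Q_\omega)=\omega-3Q_\omega^2+5Q_\omega^4$, which tends to $\omega$ exactly, so the relevant threshold is $\omega$ on both operators' right half-lines. The appropriate comparison operator is therefore $\widetilde H=-\partial_x^2+F_\epsilon''(P_\epsilon(x))$, whose threshold is exactly $\omega$. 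Concretely, I would run a Sturm oscillation argument rather than min--max. Consider the solution $u$ of $H^{(0)}u=\omega u$ with $u(-R_\epsilon)=0$; having at most one eigenvalue below $\omega$ is equivalent to $u$ having at most one zero in $(-R_\epsilon,\infty)$ (counting the behavior at infinity via the decaying/non-oscillating solution at threshold). On the plateau region, $V-\omega\ge c>0$, so $u$ grows exponentially without zeros. On a compact window $|x|\le K$, $u$ is $O(\epsilon)$-close (after normalization) to the corresponding solution of $H_0u=\tfrac3{16}u$ growing from $-\infty$, which has exactly one zero, since $P_0'$ has none and the threshold solution has one by the spectral information on $H_0$. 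On $x\ge K$, $V-\omega=-3Q_\omega^2+5Q_\omega^4\le 0$ but decays exponentially, and here the critical issue is to rule out a second zero in the long tail. For this I would write $V-\omega\approx -3Q_\omega^2$ with $Q_\omega\lesssim e^{-\sqrt\omega x}$, compare with the threshold equation $u''=(V-\omega)u$ whose solutions are asymptotically $a+bx$, and use a Pr\"ufer/Wronskian estimate showing that $\int_K^\infty x|V-\omega|\,dx$ is small for $K$ large. Then no additional zero can occur provided $u'/u$ at $x=K$ is bounded below by a constant that is uniform in $\epsilon$, which follows from the convergence to the $H_0$ threshold solution. This tail zero-counting with uniform constants is the step I expect to require the most care.
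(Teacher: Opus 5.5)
Your reduction to $\ell=0$ by form monotonicity in $\ell$ is the paper's Step~2; the paper phrases it as an induction with an orthogonality trick, but it is the same min--max fact. Your key input, that the $\phi^6$ kink operator $H_0$ has a single eigenvalue below $3/16$, is also the paper's. The transfer to the radial operator is genuinely different. The paper builds Jost solutions for the whole-line glued operator $-\partial_x^2-3P_\epsilon^2+5P_\epsilon^4$ and proves convergence of the Jost Wronskian in the complex spectral parameter. It then compares with the shifted Dirichlet half-line problem using $e^{-cR_\epsilon}$ smallness, and counts eigenvalues by Rouch\'e's theorem on a contour around the negative half-disc.

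You instead work only at the real energy $E=\omega$ and count zeros of the Dirichlet solution in three regions. This is more elementary: it needs no uniform Jost estimates in a complex parameter up to the threshold $z=0$, which lies on the paper's contour. Your diagnosis that an $L^\infty$ min--max bound only gives $\mu_2\ge\tfrac3{16}-C\epsilon$ is exactly right. The threshold oscillation count is also legitimate: simple zeros of the threshold solution persist for $E$ slightly below $\omega$, and Sturm comparison gives the reverse inequality. Matching at $x=-K$ is routine, since the Riccati variable $u'/u$ is attracted to $\sqrt{V-\omega}\approx\tfrac34$ on the plateau.

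There is one gap. Your tail step needs a fact about $H_0$ that does not follow from ``exactly one eigenvalue below $3/16$'': there is no threshold resonance. Equivalently, the $H_0$ threshold solution decaying at $-\infty$ must behave like $a+bx$ with $b\neq0$ at $+\infty$.

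To see why this is needed, set $w=-u>0$ at $x=K$ and $\beta=w'(K)/w(K)$. The Volterra formula gives $w>0$ on $[K,\infty)$ as soon as
$\beta\bigl(1-\int_K^\infty (y-K)|V-\omega|\,dy\bigr)\ge\int_K^\infty|V-\omega|\,dy$, and the right side is $O(e^{-cK})$ uniformly in $\epsilon$.
\begin{itemize}
\item If $b\neq0$, the limiting value is $\beta_0(K)\approx1/K$, so the condition holds for $K$ large and then $\epsilon$ small.
\item If $H_0$ had a resonance, then $\beta_0(K)\approx\int_K^\infty|V_0-\tfrac3{16}|\,dy$, which is the same size as the right side. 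The $O(\epsilon)$ corrections would then decide whether a second zero, hence a second eigenvalue, appears far out.
\end{itemize}
So ``$u'/u(K)$ bounded below by a constant uniform in $\epsilon$'' is not the right condition; what you need is that this lower bound beats the tail integral.

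The paper proves non-resonance, and it is also what keeps the limiting Wronskian nonzero at $z=0$ in its Rouch\'e argument. It uses the factorization $\mathcal H_+=A_+^*A_+$ with partner $A_+A_+^*=-\partial_x^2+\tfrac3{16}+\tfrac9{64}\bigl(1-\tanh(\tfrac{\sqrt3}{4}x)\bigr)^2\ge\tfrac3{16}$. Concretely, the decaying threshold solution of $H_0$ is $A_+^*w$, where $w$ is the positive convex solution of the partner equation; $w$ grows linearly, and hence so does $A_+^*w$. With this added, your argument is complete for small $\epsilon$, which is the same regime the paper's proof covers.
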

\begin{proof}

We separate the proof into two parts.  In the radial sector we shift
the interface to the origin and compare the half-line operator with the
one-dimensional kink operator on the whole line.  The latter has exactly
one negative eigenvalue and no zero-energy resonance.  Jost-function
convergence then transfers this count to the radial operator for small
$\epsilon$.  Once the radial count is known, the higher angular sectors
follow from a simple min--max induction.

We only need to prove this for $H^{(\ell)}$. We work with the shifted one-dimensional operators
\[
\widetilde H_\epsilon^{(\ell)}:=H^{(\ell)}-\omega
=-\partial_r^2-3Q_\omega^2+5Q_\omega^4+\frac{\ell(\ell+1)}{r^2}
\]
on $L^2(0,\infty;dr)$.  An eigenvalue of $L_+^{(\ell)}$ below $\omega$ is exactly a negative eigenvalue of $\widetilde H_\epsilon^{(\ell)}$, so it suffices to show that each $\widetilde H_\epsilon^{(\ell)}$ has at most one negative eigenvalue.

\smallskip
\noindent
\emph{Step 1: the radial sector.}
We begin with \(\ell=0\). Let \(H_{\epsilon}=-\partial_x^2-3P_\epsilon^2(x)+5P_\epsilon^4(x)\) denote the self-adjoint Schr\"odinger operator on \(L^2(\mathbb R)\). Set
    \begin{align*}
        V(x, \epsilon)=-3P_\epsilon^2(x)+5P_\epsilon^4(x),
    \end{align*}
    Then
    $$V_{+,\epsilon}:=\lim_{x\to +\infty}V(x,\epsilon)=0, \quad \text{and} \quad V_{-,\epsilon}:=\lim_{x\to -\infty}V(x,\epsilon)=-3(Q_\epsilon^*)^2+5(Q_\epsilon^*)^4>0.$$
   At \(\epsilon=0\), put
    \(\mathcal H_+=H_0+\omega_c\), \(\omega_c=3/16\), and
    \[
    A_+=\partial_x+\frac{\sqrt3}{8}
    \left(3\tanh\!\left(\frac{\sqrt3 x}{4}\right)-1\right).
    \]
    Direct calculation gives
    \[
    \mathcal H_+=A_+^*A_+,
    \qquad
    A_+A_+^*=-\partial_x^2+\omega_c
    +\frac9{64}\left(1-\tanh\!\left(\frac{\sqrt3 x}{4}\right)\right)^2.
    \]
    Hence \(0\) is the only eigenvalue of \(\mathcal H_+\) below
    \(\omega_c\), with eigenfunction \(P_0'\).  To exclude a threshold
    resonance, suppose \((\mathcal H_+-\omega_c)u=0\) with \(u\) bounded
    at both ends and put \(w=A_+u\).  Then
    \[
      -w''+\frac9{64}\left(1-\tanh\!\left(\frac{\sqrt3 x}{4}\right)\right)^2w=0.
    \]
    Multiplication by \(w\) and integration over \(\mathbb R\) give
    \(w=0\); hence \(u\in\ker A_+=\operatorname{span}\{P_0'\}\), which is
    incompatible with \(\mathcal H_+u=\omega_cu\).  Thus \(H_0\) has
    exactly one negative eigenvalue and neither an eigenvalue nor a
    resonance at zero.

    For any $z\in \mathbb{C}\setminus [0,+\infty)$, define the Jost functions $f_{\pm}(x,z,\epsilon)$ by
    \begin{align*}
       &H_{\epsilon} f_+(x,z,\epsilon)=zf_+(x,z,\epsilon), \quad f_+(x,z,\epsilon)\to e^{ixk_{+}} \quad\text{as } x\to +\infty;\\
       &H_{\epsilon} f_-(x,z,\epsilon)=zf_-(x,z,\epsilon), \quad f_-(x,z,\epsilon)\to e^{-ixk_{-,\epsilon}} \quad\text{as } x\to -\infty,
    \end{align*}
    where the branches \(k_+:=\sqrt{z}\) and \(k_{-,\epsilon}:=\sqrt{z-V_{-,\epsilon}}\) are chosen to have nonnegative imaginary parts. Define the Wronskian
    \begin{align*}
        W(z, \epsilon):=W[f_+(\cdot,z,\epsilon),f_-(\cdot,z,\epsilon)]:=f_+'(\cdot,z,\epsilon)f_-(\cdot,z,\epsilon)-f_-'(\cdot,z,\epsilon)f_+(\cdot,z,\epsilon).
    \end{align*}
    We first prove that \(W(z,\epsilon)\) tends to \(W(z,0)\) uniformly on every compact subset \(K\subset\mathbb{C}\setminus[0,+\infty)\). In fact, \(f_\pm(0,z,\epsilon)\) and \(f_\pm'(0,z,\epsilon)\) tend to \(f_\pm(0,z,0)\) and \(f_\pm'(0,z,0)\), respectively, uniformly on \(K\). Define
    \begin{align*}
        &m_+(x,z,\epsilon)=e^{-ixk_+}f_+(x,z,\epsilon),\\
        &m_-(x,z,\epsilon)=e^{ixk_{-,\epsilon}}f_-(x,z,\epsilon),
    \end{align*}
    and
    \begin{align*}
        &q_+(x,\epsilon)=V(x,\epsilon),\\
        &q_-(x,\epsilon)
    =V(x,\epsilon)-V_{-,\epsilon},
    \end{align*}
    Then
$$|q_+(x,\epsilon)-q_+(x,0)|\lesssim |P_\epsilon(x)-P_0(x)|\lesssim \min\{\epsilon, e^{-c|x|}\}.$$
 We have
 \begin{align*}
    &m_+(x,z,\epsilon)=1+\int_{x}^{\infty}\frac{e^{2ik_+(y-x)}-1}{2ik_+}\,q_{+}(y,\epsilon)\,m_+(y,z,\epsilon)\,dy,\\
    &m_-(x,z,\epsilon)=1+\int_{-\infty}^{x}\frac{e^{2ik_{-,\epsilon}(x-y)}-1}{2ik_{-,\epsilon}}\,q_{-}(y,\epsilon)\,m_-(y,z,\epsilon)\,dy.
 \end{align*}
 Since we have
 \begin{align*}
   \int_{0}^{\infty} y |q_+(y,\epsilon)|\,dy \lesssim1 \quad \text{and} \quad \int^{0}_{-\infty} |y| |q_-(y,\epsilon)|\,dy \lesssim 1,
 \end{align*}
 Gronwall's inequality gives
 \begin{align*}
     |m_+(x,z,\epsilon)|\lesssim 1 \quad \text{for} ~ x\ge 0, \quad \text{and} \quad |m_-(x,z,\epsilon)|\lesssim 1 \quad \text{for} ~ x\le 0.
 \end{align*}
Thus, when $x\leq0$,
\begin{align*}
    &|m_-(x,z,\epsilon)-m_-(x,z,0)|\\
    \lesssim&
    \int_{-\infty}^x \left| \frac{e^{2ik_{-,\epsilon}(x-y)}-1}{2ik_{-,\epsilon}}\,q_{-}(y,\epsilon)\,m_-(y,z,\epsilon)-\frac{e^{2ik_{-,0}(x-y)}-1}{2ik_{-,0}}\,q_{-}(y,0)\,m_-(y,z,0) \right|dy\\
    \lesssim& \int_{-\infty}^x \left| \frac{e^{2ik_{-,0}(x-y)}-1}{2ik_{-,0}}\,q_{-}(y,0)\,(m_-(y,z,\epsilon) -m_-(y,z,0) )\right|dy \\
    &+\int_{-\infty}^x \left| \left(\frac{e^{2ik_{-,\epsilon}(x-y)}-1}{2ik_{-,\epsilon}}-\frac{e^{2ik_{-,0}(x-y)}-1}{2ik_{-,0}} \right) \,q_{-}(y,0)\,m_-(y,z,\epsilon)\right|dy\\
    &+\int_{-\infty}^x \left| \frac{e^{2ik_{-,0}(x-y)}-1}{2ik_{-,0}}\,(q_{-}(y,\epsilon)-q_{-}(y,0))\,m_-(y,z,\epsilon) \right|dy\\
    :=& I + II + III.
\end{align*}
    We have $|\im k_{-,\epsilon}|\gtrsim 1$ for $z\in K$, so
    \begin{align*}
        I&= \int_{-\infty}^x \left| \frac{e^{2ik_{-,0}(x-y)}-1}{2ik_{-,0}}\,q_{-}(y,0)\,(m_-(y,z,\epsilon) -m_-(y,z,0) )\right|dy\\
        &\lesssim \int_{-\infty}^x | q_{-}(y,0)|\,|m_-(y,z,\epsilon) -m_-(y,z,0) |dy,
    \end{align*}
   and
    \begin{align*}
        III&=\int_{-\infty}^x \left| \frac{e^{2ik_{-,0}(x-y)}-1}{2ik_{-,0}}\,(q_{-}(y,\epsilon)-q_{-}(y,0))\,m_-(y,z,\epsilon) \right|dy\\
        &\lesssim \int_{-\infty}^x \left| (q_{-}(y,\epsilon)-q_{-}(y,0))\right|dy.
    \end{align*}
    Let
    $$
    G(x,k):=\frac{e^{2ikx}-1}{2ik},
    $$
    Then
    $$\partial_k G(x,k)= -\frac{e^{2ikx}-1}{2ik^2}+\frac{xe^{2ikx}}{k}.$$
    Therefore,
    \begin{align*}
        \left|\frac{e^{2ik_{-,\epsilon}(x-y)}-1}{2ik_{-,\epsilon}}-\frac{e^{2ik_{-,0}(x-y)}-1}{2ik_{-,0}} \right|\lesssim |k_{-,\epsilon}-k_{-,0}|\, \sup_{\im \theta\gtrsim 1} |\partial_k G(x-y,\theta)|\lesssim \epsilon (1+|x-y|).
    \end{align*}
    Hence
    \begin{align*}
        II&=\int_{-\infty}^x \left| \left(\frac{1-e^{2ik_{-,\epsilon}(x-y)}}{2ik_{-,\epsilon}}-\frac{1-e^{2ik_{-,0}(x-y)}}{2ik_{-,0}} \right) \,q_{-}(y,0)\,m_-(y,z,\epsilon)\right|dy\\
        &\lesssim \epsilon \int_{-\infty}^x  (1+|y|) \,|q_{-}(y,0)|dy.    
    \end{align*}
Combining these estimates, we obtain
\begin{align*}
    &|m_-(x,z,\epsilon)-m_-(x,z,0)|\\
    &\lesssim \int_{-\infty}^x \Bigl(\epsilon (1+|y|) \,|q_{-}(y,0)|
    +\left| q_{-}(y,\epsilon)-q_{-}(y,0)\right|
    +| q_{-}(y,0)|\,|m_-(y,z,\epsilon) -m_-(y,z,0) |\Bigr)\,dy.
\end{align*}
By Gronwall's inequality, we have
\begin{align*}
    |m_-(x,z,\epsilon)-m_-(x,z,0)|\lesssim e^{C\|q_-(\cdot,0)\|_{L^1(-\infty,0)}} \int_{-\infty}^x \Bigl(\epsilon (1+|y|) \,|q_{-}(y,0)|
    +\left| q_{-}(y,\epsilon)-q_{-}(y,0)\right|\Bigr)\,dy\lesssim \epsilon^{\frac{1}{2}}.
\end{align*}
Using similar estimates, keeping in mind that 
\begin{align*}
    \left|\frac{e^{2ik_+(y-x)}-1}{2ik_+}\right|\lesssim 1+ |y-x|
\end{align*}
holds uniformly on compact subsets of $\mathbb{C}\setminus(0,+\infty)$,
we also have
\begin{align*}
    |m_+(x,z,\epsilon)-m_+(x,z,0)|\lesssim e^{C\|(1+x) q_+(x,0)\|_{L^1(0,+\infty)}} \int_{0}^{+\infty} (1+y)\left| q_{+}(y,\epsilon)-q_{+}(y,0)\right|dy\lesssim \epsilon,
\end{align*}
  and
  \begin{align*}
    |m_+'(x,z,\epsilon)-m_+'(x,z,0)|\lesssim e^{C\| q_+(x,0)\|_{L^1(0,+\infty)}} \int_{0}^{+\infty} \left| q_{+}(y,\epsilon)-q_{+}(y,0)\right|dy\lesssim \epsilon,
\end{align*}
and
\begin{align*}
    |m_-'(x,z,\epsilon)-m_-'(x,z,0)|\lesssim e^{C\|q_-(\cdot,0)\|_{L^1(-\infty,0)}} \int_{-\infty}^x \Bigl(\epsilon (1+|y|) \,|q_{-}(y,0)|
    +\left| q_{-}(y,\epsilon)-q_{-}(y,0)\right|\Bigr)\,dy\lesssim \epsilon^{\frac{1}{2}}.
\end{align*}
As a result,
\begin{align}\label{error-estimates-W}
    |W(z, \epsilon)-W(z,0)|\lesssim \epsilon^{\frac{1}{2}}
\end{align}
holds uniformly on $K$.

We also seek $\tilde{f}_-(x,z,\epsilon)$ satisfying
\begin{align*}
    H_{\epsilon} \tilde{f}_-(x,z,\epsilon)=z\tilde{f}_-(x,z,\epsilon), \quad \tilde{f}_-(x,z,\epsilon)\to e^{ixk_{-,\epsilon}} \quad\text{as } x\to -\infty.
\end{align*}
Define
\begin{align*}
    \tilde{m}_-(x,z,\epsilon)=e^{-ixk_{-,\epsilon}}\tilde{f}_-(x,z,\epsilon),
\end{align*}
Then
\begin{align*}
    \tilde{m}_-(x,z,\epsilon)=1+\int_{-\infty}^{x}\frac{1-e^{2ik_{-,\epsilon}(x-y)}}{2ik_{-,\epsilon}}\,q_{-}(y,\epsilon)\,\tilde{m}_-(y,z,\epsilon)\,dy,
\end{align*}
Gronwall's inequality again gives
\begin{align*}
    |\tilde{m}_-(x,z,\epsilon)-1|\lesssim e^{C\|q_-(\cdot,\epsilon)\|_{L^1(-\infty,0)}} \int_{-\infty}^x  |q_{-}(y,\epsilon)| dy \lesssim e^{-c|x|}, \quad \text{for}~ x\le 0.
\end{align*}
Now assume
\begin{align*}
    f_+(x,z,\epsilon)=A(z,\epsilon)f_-(x,z,\epsilon)+B(z,\epsilon)\tilde{f}_-(x,z,\epsilon),
\end{align*}
then
\begin{align*}
    |A(z,\epsilon)|=\left|\frac{W[f_+(\cdot,z,\epsilon), \tilde{f}_-(\cdot,z,\epsilon)]}{W[f_-(\cdot,z,\epsilon),\tilde{f}_-(\cdot,z,\epsilon)]}\right|=\frac{\big|W[f_+(\cdot,z,\epsilon), \tilde{f}_-(\cdot,z,\epsilon)]\big|}{|2ik_{-,\epsilon}|}\lesssim 1,
\end{align*}
\begin{align*}
    B(z,\epsilon)=\frac{W[f_+(\cdot,z,\epsilon), f_-(\cdot,z,\epsilon)]}{W[\tilde{f}_-(\cdot,z,\epsilon),f_-(\cdot,z,\epsilon)]}=\frac{W(z,\epsilon)}{2ik_{-,\epsilon}},
\end{align*}
hence
\begin{align*}
    \left|e^{-ixk_{-,\epsilon}}f_+(x,z,\epsilon)-\frac{W(z,\epsilon)}{2ik_{-,\epsilon}}\right|
    &=\left|e^{-ixk_{-,\epsilon}}A(z,\epsilon)f_-(x,z,\epsilon)+\frac{W(z,\epsilon)}{2ik_{-,\epsilon}}(\tilde{m}_-(x,z,\epsilon)-1)\right|\\
    &\lesssim e^{-c|x|}, \quad \text{as }x\to -\infty.
\end{align*}

Let \(H_{\epsilon,R}=-\partial_r^2-3P_\epsilon^2(r-R)+5P_\epsilon^4(r-R)\) denote the self-adjoint Schr\"odinger operator on \(L^2(0,+\infty)\) with Dirichlet boundary conditions. For \(R>0\), consider the Jost solutions of \(H_{\epsilon,R}\):
\begin{align*}
    H_{\epsilon,R} f_+(x,z,\epsilon,R)=zf_+(x,z,\epsilon,R), \quad f_+(x,z,\epsilon,R)\to e^{ixk_{+}} \quad\text{as } x\to +\infty,
\end{align*}
Indeed,
\begin{align*}
    f_+(x,z,\epsilon,R)=f_+(x-R,z,\epsilon),
\end{align*}
In particular,
\begin{align}\label{error-estimates-fR}
    \left|e^{iRk_{-,\epsilon}}f_+(0,z,\epsilon,R)-\frac{W(z,\epsilon)}{2ik_{-,\epsilon}}\right| \lesssim e^{-cR}.
\end{align}
We now define the Jost solution of \(\widetilde H_\epsilon^{(0)}\):
\begin{align*}
    \widetilde H_\epsilon^{(0)} \tilde{f}_+(x,z,\epsilon)=z\tilde{f}_+(x,z,\epsilon), \quad \tilde{f}_+(x,z,\epsilon)\to e^{ixk_{+}} \quad\text{as }x\to+\infty,
\end{align*}
Choose \(R=R_\epsilon\). We aim to show
\begin{align}\label{error-estimates-tildef}
    \left|e^{iR_\epsilon k_{-,\epsilon}}f_+(0,z,\epsilon,R_\epsilon)-e^{iR_\epsilon k_{-,\epsilon}}\tilde{f}_+(0,z,\epsilon)\right|\lesssim\epsilon^{\frac{1}{2}}.
\end{align}
Once this is proved, we can define
$$f_\epsilon(z):=e^{iR_\epsilon k_{-,\epsilon}}\tilde{f}_+(0,z,\epsilon),$$
and
$$g(z):=\frac{W(z,0)}{2ik_{-,0}}.$$

By \eqref{error-estimates-W}, \eqref{error-estimates-fR}, and \eqref{error-estimates-tildef}, it follows that
the estimate \(|f_\epsilon(z)-g(z)|\lesssim\epsilon^{1/2}\) holds uniformly for \(z\in K\). Now define \(\Omega=\{z:|z|<100,\ \re z<0\}\) and set \(K=\partial\Omega\). Since \(H_0\) has only one negative eigenvalue and \(0\) is neither an eigenvalue nor a resonance of \(H_0\), the function \(g\) has exactly one zero in \(\Omega\) and does not vanish on \(K\). For sufficiently small \(\epsilon\), we have \(|f_\epsilon(z)-g(z)|<|g(z)|\) on \(K\). Both functions are analytic on \(\mathbb{C}\setminus(0,+\infty)\). By Rouch\'e's theorem, \(f_\epsilon\) and \(g\) have the same number of zeros in \(\Omega\). Thus \(\widetilde H_\epsilon^{(0)}\) has exactly one negative eigenvalue in \(\Omega\). Since \(\widetilde H_\epsilon^{(0)}>-10\), it has exactly one negative eigenvalue on \((-\infty,0]\). This proves the \(\ell=0\) case.

It remains to prove \eqref{error-estimates-tildef}. Define
\begin{align*}
H_{\epsilon,R_\epsilon}\psi(x,z,\epsilon)=z\psi(x,z,\epsilon), \quad\psi(0,z,\epsilon)=0, \quad\psi'(0,z,\epsilon)=1,
\end{align*}
and
\begin{align*}
\widetilde H_\epsilon^{(0)}\tilde{\psi}(x,z,\epsilon)=z\tilde{\psi}(x,z,\epsilon), \quad \tilde{\psi}(0,z,\epsilon)=0,\quad \tilde{\psi}'(0,z,\epsilon)=1,
\end{align*}
then $f_+(0,z,\epsilon,R_\epsilon)=W[\psi(\cdot,z,\epsilon),f_+(\cdot,z,\epsilon,R_\epsilon)]$ and $\tilde{f}_+(0,z,\epsilon)=W[\tilde{\psi}(\cdot,z,\epsilon), \tilde{f}_+(\cdot,z,\epsilon)]$, so it is equivalent to prove
\begin{align*}
    \left|e^{iR_\epsilon k_{-,\epsilon}}\left(W[\psi(\cdot,z,\epsilon),f_+(\cdot,z,\epsilon,R_\epsilon)]-W[\tilde{\psi}(\cdot,z,\epsilon), \tilde{f}_+(\cdot,z,\epsilon)]\right)\right|\lesssim\epsilon^{\frac{1}{2}}.
\end{align*}
It is then enough to prove
\begin{align*}
    &e^{-x \im k_{-,\epsilon}}\left(|\psi(x,z,\epsilon)|+|\psi'(x,z,\epsilon)|+|\tilde{\psi}(x,z,\epsilon)|+|\tilde{\psi}'(x,z,\epsilon)|\right)\lesssim 1, ~~~x\le R_\epsilon,\\
    &|f_+(x,z,\epsilon,R_\epsilon)|+|f_+'(x,z,\epsilon,R_\epsilon)|+|\tilde{f}_+(x,z,\epsilon)|+|\tilde{f}_+'(x,z,\epsilon)|\lesssim 1, ~~~x\ge R_\epsilon,
\end{align*}
and
\begin{align*}
    &e^{-x \im k_{-,\epsilon}}\left|\psi(x,z,\epsilon)-\tilde{\psi}(x,z,\epsilon)\right|\lesssim\epsilon^{\frac{1}{2}}, ~~~x\le R_\epsilon,\\
    &e^{-x \im k_{-,\epsilon}}\left|\psi'(x,z,\epsilon)-\tilde{\psi}'(x,z,\epsilon)\right|\lesssim\epsilon^{\frac{1}{2}}, ~~~x\le R_\epsilon,\\
    &\left|f_+(x,z,\epsilon,R_\epsilon)-\tilde{f}_+(x,z,\epsilon)\right|\lesssim\epsilon^{\frac{1}{2}}, ~~~ x\ge R_\epsilon,\\
    &\left|f_+'(x,z,\epsilon,R_\epsilon)-\tilde{f}_+'(x,z,\epsilon)\right|\lesssim\epsilon^{\frac{1}{2}}, ~~~ x\ge R_\epsilon.
\end{align*}
Once we prove these estimates, evaluating the Wronskian at $x=R_\epsilon$ gives the desired estimate. Define
$$\tilde{V}(x, \epsilon) = -3Q_\omega^2+5Q_\omega^4,$$
and
$$\tilde{q}_-(x, \epsilon)=\tilde{V}(x, \epsilon)-V_{-,\epsilon}.$$
    Then
\begin{align*}
    |\tilde{q}_-(x, \epsilon)-q_{-}(x-R_\epsilon,\epsilon)|\lesssim\min\{\epsilon, e^{-c|x-R_\epsilon|}\}.
\end{align*}
We have
\begin{align*}
    &\psi(x,z,\epsilon)=\frac{\sin (k_{-,\epsilon}x)}{k_{-,\epsilon}}+\int_0^x \frac{\sin (k_{-,\epsilon}(x-y))}{k_{-,\epsilon}}q_{-}(y-R_\epsilon,\epsilon)\psi(y,z,\epsilon)dy,\\
    &\tilde{\psi}(x,z,\epsilon)=\frac{\sin (k_{-,\epsilon}x)}{k_{-,\epsilon}}+\int_0^x \frac{\sin (k_{-,\epsilon}(x-y))}{k_{-,\epsilon}}\tilde{q}_-(y,\epsilon)\tilde{\psi}(y,z,\epsilon)dy.
\end{align*}
Since 
$$\left|\frac{\sin (k_{-,\epsilon}x)}{k_{-,\epsilon}}\right|\lesssim e^{\im k_{-,\epsilon}x},$$
by Gronwall's inequality we have
\begin{align*}
    |\psi(x,z,\epsilon)|+|\tilde{\psi}(x,z,\epsilon)|\lesssim e^{\im k_{-,\epsilon}x} e^{\int_0^x|q_{-}(y-R_\epsilon,\epsilon)|+|\tilde{q}_-(y,\epsilon)|dy}\lesssim e^{\im k_{-,\epsilon}x}, \quad \text{for} ~~x\le R_\epsilon.
\end{align*}
Similar estimates hold for \(\psi'(x,z,\epsilon)\) and \(\tilde{\psi}'(x,z,\epsilon)\). In addition,
\begin{align*}
    |\psi(x,z,\epsilon)-\tilde{\psi}(x,z,\epsilon)|
    \lesssim& \int_0^x e^{\im k_{-,\epsilon}(x-y)} |q_{-}(y-R_\epsilon,\epsilon)\psi(y,z,\epsilon)-\tilde{q}_-(y,\epsilon)\tilde{\psi}(y,z,\epsilon)|dy\\
    \lesssim& \int_0^x e^{\im k_{-,\epsilon}(x-y)} |q_{-}(y-R_\epsilon,\epsilon)-\tilde{q}_-(y,\epsilon)|\, |\tilde{\psi}(y,z,\epsilon)|dy\\
    &+\int_0^x e^{\im k_{-,\epsilon}(x-y)} |q_{-}(y-R_\epsilon,\epsilon)|\,|\psi(y,z,\epsilon)-\tilde{\psi}(y,z,\epsilon)|dy.
\end{align*}
This gives
\begin{align*}
 |\psi(x,z,\epsilon)-\tilde{\psi}(x,z,\epsilon)|
    \lesssim   e^{\im k_{-,\epsilon}x} e^{\int_0^x |q_{-}(y-R_\epsilon,\epsilon)|dy} \int_0^x |q_{-}(y-R_\epsilon,\epsilon)-\tilde{q}_-(y,\epsilon)|dy\lesssim e^{\im k_{-,\epsilon}x} \epsilon^{\frac{1}{2}}.
\end{align*}
A similar estimate holds for \(|\psi'(x,z,\epsilon)-\tilde{\psi}'(x,z,\epsilon)|\). Define
\begin{align*}
  &m_+(x,z,\epsilon,R_\epsilon)=e^{-ik_+x} f_+(x,z,\epsilon,R_\epsilon),\\
  &\tilde{m}_+(x,z,\epsilon)=e^{-ik_+x} \tilde{f}_+(x,z,\epsilon).
\end{align*}
We also have
\begin{align*}
&m_+(x,z,\epsilon,R_\epsilon)=1+\int_{x}^{\infty}\frac{e^{2ik_+(y-x)}-1}{2ik_+}\,V(y-R_\epsilon,\epsilon)\,m_+(y,z,\epsilon,R_\epsilon)dy,\\
&\tilde{m}_+(x,z,\epsilon)=1+\int_{x}^{\infty}\frac{e^{2ik_+(y-x)}-1}{2ik_+}\,\tilde{V}(y,\epsilon)\,\tilde{m}_+(y,z,\epsilon)dy,\\
\end{align*}
Since 
$$\left|\frac{e^{2ik_+(y-x)}-1}{2ik_+}\right|\lesssim 1+|y-R_\epsilon|, \quad \text{for}~~ y\ge x\ge R_\epsilon,$$
we have
\begin{align*}
    |m_+(x,z,\epsilon,R_\epsilon)|+|\tilde{m}_+(x,z,\epsilon)|\lesssim e^{\int_x^{+\infty}(1+|y-R_\epsilon|)(|V(y-R_\epsilon,\epsilon)|+|\tilde{V}(y,\epsilon)|) dy}\lesssim 1.
\end{align*}
The same holds for $m_+'(x,z,\epsilon,R_\epsilon)$ and $\tilde{m}_+'(x,z,\epsilon)$. Hence we have
\begin{align*}
    |f_+(x,z,\epsilon,R_\epsilon)|+|\tilde{f}_+(x,z,\epsilon)|+|f_+'(x,z,\epsilon,R_\epsilon)|+|\tilde{f}_+'(x,z,\epsilon)|\lesssim e^{-\im k_+ x}.
\end{align*}
In addition,
\begin{align*}
    |m_+(x,z,\epsilon,R_\epsilon)-\tilde{m}_+(x,z,\epsilon)|
    \lesssim& \int_{x}^{\infty} (1+|y-R_\epsilon|)|V(y-R_\epsilon,\epsilon)-\tilde{V}(y,\epsilon)|\, |\tilde{m}_+(y,z,\epsilon)|dy\\
    &+\int_{x}^{\infty} (1+|y-R_\epsilon|)|V(y-R_\epsilon,\epsilon)|\, |m_+(y,z,\epsilon,R_\epsilon)-\tilde{m}_+(y,z,\epsilon)|\,dy,
\end{align*}
this gives
\begin{align*}
 &|m_+(x,z,\epsilon,R_\epsilon)-\tilde{m}_+(x,z,\epsilon)|\\
    &\lesssim e^{\int_{x}^{\infty} (1+|y-R_\epsilon|)|V(y-R_\epsilon,\epsilon)|dy}  \int_{x}^{\infty} (1+|y-R_\epsilon|)|V(y-R_\epsilon,\epsilon)-\tilde{V}(y,\epsilon)|dy \\
    &\lesssim \epsilon^{\frac{1}{2}}.
\end{align*}
The same holds for $|m_+'(x,z,\epsilon,R_\epsilon)-\tilde{m}_+'(x,z,\epsilon)|$. Hence
we have
\begin{align*}
|f_+(x,z,\epsilon,R_\epsilon)-\tilde{f}_+(x,z,\epsilon)|+|f_+'(x,z,\epsilon,R_\epsilon)-\tilde{f}_+'(x,z,\epsilon)|\lesssim \epsilon^{1/2} e^{-\im k_+ x}.
\end{align*}
This completes the proof for \(\ell=0\).

\smallskip
\noindent
\emph{Step 2: induction on $\ell$.}
We prove the case \(\ell>0\) by induction. Suppose the assertion holds for \(\ell\). We prove the case \(\ell+1\) by contradiction. Assume that \(\widetilde H_\epsilon^{(\ell+1)}\) has two negative eigenvalues \(\lambda_1<\lambda_2<0\), with corresponding eigenfunctions \(f_1\) and \(f_2\). Since
\begin{align*}
    \widetilde H_\epsilon^{(\ell+1)}-\widetilde H_\epsilon^{(\ell)}=\frac{2(\ell+1)}{r^2}>0,
\end{align*}
\(\widetilde H_\epsilon^{(\ell)}\) must have a negative eigenvalue \(\lambda_3\), with corresponding eigenfunction \(f_3\). By the inductive assumption, \(\lambda_3\) is the only negative eigenvalue of \(\widetilde H_\epsilon^{(\ell)}\). Since \(f_1\) and \(f_2\) are linearly independent, we can choose \(c_1\) and \(c_2\) such that
$$
\langle c_1f_1+c_2f_2, f_3 \rangle_{L^2(0, \infty)}=0,
$$
with at least one of \(c_1\) and \(c_2\) nonzero. Since \(\lambda_3\) is the only negative eigenvalue of \(\widetilde H_\epsilon^{(\ell)}\), the operator \(\widetilde H_\epsilon^{(\ell)}\) is nonnegative on \(\{f_3\}^{\perp}\). Hence
\begin{align*}
    \langle \widetilde H_\epsilon^{(\ell)}(c_1f_1+c_2f_2), c_1f_1+c_2f_2\rangle_{L^2(0,\infty)}\ge 0.
\end{align*}
On the other hand, 
\begin{align*}
     &\langle \widetilde H_\epsilon^{(\ell)}(c_1f_1+c_2f_2), c_1f_1+c_2f_2\rangle_{L^2(0,\infty)}\\
     =&\left\langle \left(\widetilde H_\epsilon^{(\ell+1)}-\frac{2(\ell+1)}{r^2}\right)(c_1f_1+c_2f_2), c_1f_1+c_2f_2\right\rangle_{L^2(0,\infty)}\\
     =& \lambda_1 c_1^2 \|f_1\|_{L^2(0,\infty)}^2+\lambda_2 c_2^2 \|f_2\|_{L^2(0,\infty)}^2 - \int_0^\infty \frac{2(\ell+1)}{r^2} (c_1f_1+c_2f_2)^2\,dr
     <0,
\end{align*}
which is a contradiction. Therefore, every sector contains at most one eigenvalue below \(\omega\), as claimed.
\end{proof}

Since each \(L_+^{(\ell)}\) has at most one eigenvalue below \(\omega\), and \(L_+^{(\ell)}\) is strictly increasing in \(\ell\), there exists a maximal integer \(\ell_\epsilon\) such that \(L_+^{(\ell)}\) has an eigenvalue for \(0\leq\ell\leq\ell_\epsilon\), whereas \(L_+^{(\ell)}\) has no eigenvalue for \(\ell>\ell_\epsilon\). We next estimate \(\ell_\epsilon\).
\begin{prop}\label{prop:ell}
  We have \(\ell_\epsilon=\frac{3}{16}\epsilon^{-1}+\mathcal{O}(1)\).
\end{prop}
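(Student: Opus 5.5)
The plan is to compare each sector with the $\ell=1$ sector, where the translation mode gives an explicit zero eigenfunction. Since $\partial_jQ_\omega=Q_\omega'(r)\,x_j/r$ lies in $\ker L_+$, the function $u(r):=rQ_\omega'(r)$ satisfies $H^{(1)}u=0$. Moreover $u<0$ on $(0,\infty)$, so $-u$ is a positive ground state of $H^{(1)}$, and $0$ is the unique eigenvalue of $L_+^{(1)}$ below $\omega$ by the preceding proposition. The identity to exploit is
\[
 H^{(\ell)}=H^{(1)}+\frac{\ell(\ell+1)-2}{r^2},
\]
and the aim is to show that the eigenvalue $\lambda_\ell$ of $L_+^{(\ell)}$, whenever it exists, satisfies
\[
 \lambda_\ell=\frac{\ell(\ell+1)-2}{R_\epsilon^2}\bigl(1+\mathcal O(\epsilon)\bigr).
\]
Together with Proposition~\ref{asymptotic-R}, which gives $R_\epsilon^{-2}=\frac{16}{3}\epsilon^2(1+\mathcal O(\epsilon))$, this settles the count. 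Indeed, the condition $\lambda_\ell<\omega=\frac3{16}-\epsilon$ then reads $\frac{16}{3}\ell^2\epsilon^2(1+\mathcal O(\epsilon)+\mathcal O(\ell^{-1}))<\frac3{16}$. This gives $\ell<\frac{3}{16}\epsilon^{-1}+\mathcal O(1)$, because a relative error of size $\epsilon$ at $\ell\sim\epsilon^{-1}$ shifts $\ell$ only by $\mathcal O(1)$.

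\emph{Existence for $\ell\le\frac3{16}\epsilon^{-1}-C$.} I would use $u$ itself as a trial function for $H^{(\ell)}$. Since $\langle H^{(1)}u,u\rangle=0$, the Rayleigh quotient equals $(\ell(\ell+1)-2)\int u^2r^{-2}\,dr\big/\int u^2\,dr$. By Corollary~\ref{cor:P0-approx} and the exponential plateau and tail bounds of Section~2, $u^2=r^2(Q_\omega')^2$ is concentrated in an $\mathcal O(1)$ neighbourhood of $R_\epsilon$, with exponentially decaying tails. Expanding $r^{-2}=R_\epsilon^{-2}(1+\mathcal O(|r-R_\epsilon|/R_\epsilon))$ on $|r-R_\epsilon|\le L_\epsilon$, and treating the tails exactly as in the proof of Proposition~\ref{asymptotic-R}, the weighted average of $r^{-2}$ is $R_\epsilon^{-2}(1+\mathcal O(\epsilon))$. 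Min--max then produces an eigenvalue below $\omega$ as soon as this quotient is below $\omega$.

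\emph{Nonexistence for $\ell\ge\frac3{16}\epsilon^{-1}+C$.} A direct quadratic-form lower bound is awkward here, since $H^{(1)}-\omega$ has no gap on $u^\perp$. I would instead use a Wronskian identity. Suppose $\lambda<\omega$ is an eigenvalue of $L_+^{(\ell)}$, and let $f>0$ be the corresponding ground state of $H^{(\ell)}$. Integrating $(u'f-uf')'$ over $(0,\infty)$, where the boundary terms vanish by the Dirichlet condition and decay at infinity, gives the exact formula
\[
 \lambda=\bigl(\ell(\ell+1)-2\bigr)\frac{\int_0^\infty r^{-2}fu\,dr}{\int_0^\infty fu\,dr},
\]
with $fu$ of one sign. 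It then suffices to show that the probability density $fu/\int fu$ is exponentially localized near $R_\epsilon$, uniformly in $\ell$ and $\lambda$, so that the average of $r^{-2}$ is again $R_\epsilon^{-2}(1+\mathcal O(\epsilon))$. For $r>R_\epsilon+K$, the factor $u$ already decays like $e^{-c(r-R_\epsilon)}$, and $f$ is decreasing there: the potential of $H^{(\ell)}-\lambda$ is positive beyond the interface, so the maximum principle applies. For $r<R_\epsilon-K$, the potential $\omega-3Q_\omega^2+5Q_\omega^4+\ell(\ell+1)r^{-2}-\lambda$ is bounded below by a positive constant on the plateau, by \eqref{eq:Fsecond-upper-equilibrium} and $\lambda<\omega$. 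A comparison argument in the spirit of the comparison lemma of Section~2 then gives $f(r)\lesssim f(R_\epsilon-K)e^{-c(R_\epsilon-K-r)}$. This controls the region near $r=0$, where $r^{-2}$ is large, since there $fu\,r^{-2}\lesssim r^{-1}e^{-c(R_\epsilon-r)}$ with $f$ vanishing linearly.

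The step I expect to be the main obstacle is the comparison of $f$ at the interface with $f$ away from it. One needs a Harnack-type bound $\sup_{|r-R_\epsilon|\le K}f\lesssim\inf_{|r-R_\epsilon|\le 1}f$, uniform in $\ell\le C\epsilon^{-1}$ and in $\lambda$ close to $\omega$, so that the normalization $\int fu$ is not dominated by the tails. In the interface window the coefficients of the ODE are bounded, because $\ell(\ell+1)r^{-2}=\mathcal O(1)$ when $\ell\lesssim\epsilon^{-1}$ and $r\sim R_\epsilon$. I would therefore try the standard one-dimensional Harnack inequality for positive solutions of $-f''+Vf=0$ with $\|V\|_{L^\infty}\lesssim1$, combined with the monotonicity of $f$ outside the window. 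Finally, for $\ell\gtrsim\epsilon^{-1}$ with a large constant no separate analysis is needed, since the monotonicity of $L_+^{(\ell)}$ in $\ell$ excludes eigenvalues below $\omega$ there.
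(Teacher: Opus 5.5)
Your proof is correct, but the nonexistence half follows a genuinely different route from the paper. The existence half is the same as the paper's lower bound: the trial state $rQ_\omega'$ in sector $\ell$, together with $\int(Q_\omega')^2\big/\int(rQ_\omega')^2=R_\epsilon^{-2}(1+\mathcal O(\epsilon))$.

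\emph{How the paper does the upper bound.} A direct quadratic-form bound is what the paper uses, and the missing gap on $u^\perp$ is irrelevant: only $H^{(1)}\ge0$ (i.e.\ $\widetilde H_\epsilon^{(1)}\ge-\omega$) is needed. The paper tests this on a cutoff $\chi_\epsilon v$ of the top-sector eigenfunction. Up to an $e^{-c\sqrt{R_\epsilon}}$ localization error, this gives $\ell_\epsilon(\ell_\epsilon+1)-2\le\omega\int\chi_\epsilon^2v^2\big/\int\chi_\epsilon^2v^2r^{-2}$. The region $r\le R_\epsilon+C$ costs nothing, since $r^{-2}\ge(R_\epsilon+C)^{-2}$ there. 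The right tail is handled by comparison with an Euler equation whose exponent is of order $R_\epsilon$, measured against the $v^2$-mass on the first unit interval beyond $R_\epsilon+C$. That comparison is self-normalizing, so no interior Harnack bound is needed.

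\emph{What your route costs.} Your Wronskian identity with the translation mode replaces that inequality by an exact formula with the one-signed density $f|u|$. The price is three extra ingredients:
\begin{enumerate}
\item Sturm positivity of $f$.
\item Monotonicity of $f$ past the interface. This holds in your regime because the centrifugal term $\ell(\ell+1)r^{-2}\gtrsim R_\epsilon^2r^{-2}$ dominates $3Q_\omega^2$ beyond $R_\epsilon+K$.
\item The interface Harnack bound. The standard one-dimensional Harnack inequality for positive solutions of $-f''+Vf=0$ with $\|V\|_{L^\infty}\lesssim1$ on a fixed-length window supplies it, uniformly in $\ell\lesssim\epsilon^{-1}$ and $\lambda<\omega$. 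So the step you flag as the main obstacle does close.
\end{enumerate}

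\emph{What your route buys.} For the count alone, only the right tail of $f|u|$ matters: mass at small $r$ only raises the average of $r^{-2}$, so your plateau analysis is superfluous for this proposition. With that left-tail estimate added, however, your identity gives $\lambda_\ell=\frac{\ell(\ell+1)-2}{R_\epsilon^2}(1+\mathcal O(\epsilon))$ in every sector at once. For general $\ell$, exterior positivity follows from your own trial-function bound on $\lambda_\ell$. This is Corollary~\ref{prop:eigenvalueposition} directly, bypassing Lemma~\ref{formula-v} and the summation of gaps in Proposition~\ref{prop:eigenvaluegap}.
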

\begin{proof}
We prove separately the lower and upper bounds.

\smallskip
\noindent
\emph{Lower bound.}
Translation invariance gives $L_+^{(1)}Q_\omega'=0$, which after conjugation is equivalent to
\[
\widetilde H_\epsilon^{(1)}(rQ_\omega')=-\omega(rQ_\omega').
\]
Using $rQ_\omega'$ as a trial state in the $\ell$-th sector, we obtain
\begin{align*}
\left\langle \widetilde H_\epsilon^{(\ell)}(rQ_\omega'),rQ_\omega'\right\rangle_{L^2(0,\infty)}
&=\left\langle \left(\widetilde H_\epsilon^{(1)}+\frac{\ell(\ell+1)-2}{r^2}\right)(rQ_\omega'),rQ_\omega'\right\rangle \\
&=-\omega\int_0^\infty (rQ_\omega')^2\,dr
+\bigl(\ell^2+\ell-2\bigr)\int_0^\infty (Q_\omega')^2\,dr.
\end{align*}
If $\ell=\ell_\epsilon+1$, then by definition the operator $\widetilde H_\epsilon^{(\ell)}$ has no negative eigenvalue.  Therefore its quadratic form is nonnegative on every test function, in particular on $rQ_\omega'$.  We conclude that
\[
\ell_\epsilon^2+3\ell_\epsilon
\ge
\frac{\omega\int_0^\infty (rQ_\omega')^2\,dr}{\int_0^\infty (Q_\omega')^2\,dr}.
\]
We now estimate the ratio on the right-hand side. 
  The same argument as in the proof of Proposition~\ref{asymptotic-R} shows that
\[
\int_0^\infty (Q_\omega')^2\,dr
=\int_{\mathbb R}(P_0'(s))^2\,ds+\mathcal O(\epsilon)
=\frac{3\sqrt3}{64}+\mathcal O(\epsilon).
\]
Similarly,
\begin{align*}
\int_0^\infty (rQ_\omega')^2\,dr
&=\int_{|r-R_\epsilon|\le L_\epsilon} r^2(Q_\omega'(r))^2\,dr+\mathcal O(1)\\
&=R_\epsilon^2\int_{\mathbb R}(P_0'(s))^2\,ds+\mathcal O(R_\epsilon)\\
&=\frac{3\sqrt3}{64}R_\epsilon^2+\mathcal O(\epsilon^{-1}).
\end{align*}
Hence
\[
\frac{\omega\int_0^\infty (rQ_\omega')^2\,dr}{\int_0^\infty (Q_\omega')^2\,dr}
=\omega R_\epsilon^2+\mathcal O(\epsilon^{-1})
=\frac{9}{256}\epsilon^{-2}+\mathcal O(\epsilon^{-1}),
\]
where in the last step we used $\omega=\frac{3}{16}-\epsilon$ and Proposition~\ref{asymptotic-R}.  Solving the resulting quadratic inequality for $\ell_\epsilon$ gives
\[
\ell_\epsilon\ge \frac{3}{16}\epsilon^{-1}+\mathcal O(1).
\]

\smallskip
\noindent
\emph{Upper bound.}
Let $v$ be the normalized eigenfunction of $\widetilde H_\epsilon^{(\ell_\epsilon)}$ corresponding to its unique negative eigenvalue $-\lambda<0$:
\[
-\partial_r^2 v-3Q_\omega^2v+5Q_\omega^4v+\frac{\ell_\epsilon(\ell_\epsilon+1)}{r^2}v=-\lambda v,
\qquad \|v\|_{L^2(0,\infty)}=1.
\]
By the Sturm oscillation theorem, we may choose \(v>0\) on \((0,\infty)\). First, we prove the eigenfunction decays quickly. Note that \(\ell_\epsilon\geq\frac{3}{16}\epsilon^{-1}+\mathcal{O}(1)\), so at \(r=R_\epsilon\), we have \(\frac{\ell_\epsilon(\ell_\epsilon+1)}{r^2}\geq\frac{3}{16}+\mathcal{O}(\epsilon)\). On the other hand, since \(|Q_\omega(r)|\lesssim e^{-c|r-R_\epsilon|}\), we can choose a universal constant \(C\) such that, for \(r\geq R_\epsilon+C\),
$$|- 3Q_\omega^2 + 5Q_\omega^4|\le \frac{\ell_\epsilon(\ell_\epsilon+1)}{2r^2}.$$
Therefore on this region,
\[
v''(r)
=\left(-3Q_\omega^2+5Q_\omega^4+\frac{\ell_\epsilon(\ell_\epsilon+1)}{r^2}+\lambda\right)v(r)
\ge \frac{\ell_\epsilon(\ell_\epsilon+1)}{2r^2}v(r).
\]
Consider the comparison equation
\[
\tilde v''=\frac{\ell_\epsilon(\ell_\epsilon+1)}{2r^2}\tilde v,
\qquad \tilde v(r_0)=v(r_0),
\qquad \lim_{r\to\infty}\tilde v(r)=0,
\]
for some $r_0\ge R_\epsilon+C$.  Solving this Euler equation gives
\[
\tilde v(r)=v(r_0)\left(\frac{r}{r_0}\right)^{\frac{1-\sqrt{2\ell_\epsilon(\ell_\epsilon+1)+1}}{2}}.
\]
A comparison argument yields
\[
v(r)\le \tilde v(r),
\qquad r\ge r_0.
\]
This polynomial tail decay is in fact very strong because the exponent is of size $cR_\epsilon$.

Next we localize the quadratic form against the $\ell=1$ threshold.

  Let $\chi\ge 0$ be a smooth cut-off function, with $\chi(x)=1$ for $x\le 0$ and $\chi(x)=0$ for $x\ge 1$, and set
\[
\chi_\epsilon(r):=\chi\bigl(r-R_\epsilon-C-\sqrt{R_\epsilon}\bigr).
\]
Since $\widetilde H_\epsilon^{(1)}$ has the single negative eigenvalue $-\omega$ and no spectrum below it, one has
\[
\langle (\widetilde H_\epsilon^{(1)}+\omega)f,f\rangle\ge0
\qquad\text{for all }f\in H_0^1(0,\infty).
\]
Applying this to $f=\chi_\epsilon v$ gives
\begin{equation}\label{eq:chi-form}
\int_0^{\infty}(\partial_r(\chi_\epsilon v))^2+
\left(-3Q_\omega^2+5Q_\omega^4+\frac{2}{r^2}+\omega\right)(\chi_\epsilon v)^2\,dr\ge0.
\end{equation}
On the other hand, multiplying the eigenvalue equation for $v$ by $v$ and integrating by parts yields
\begin{equation}\label{eq:v-form}
\int_0^{\infty}(v')^2+
\left(-3Q_\omega^2+5Q_\omega^4+\frac{\ell_\epsilon(\ell_\epsilon+1)}{r^2}+\lambda\right)v^2\,dr=0.
\end{equation}
Subtracting \eqref{eq:chi-form} from \eqref{eq:v-form} and integrating by parts, we obtain
\begin{align*}
   \int_0^{+\infty} (1-\chi_\epsilon^2)(\partial_r v)^2+\left(- 3Q_\omega^2 + 5Q_\omega^4 + \frac{\ell_\epsilon(\ell_\epsilon+1)}{r^2}\right)(1-\chi_\epsilon^2)v^2 + \lambda v^2 \quad\quad\quad\quad\quad\quad  \\
+\left(\frac{\ell_\epsilon(\ell_\epsilon+1)-2}{r^2}-\omega\right)\chi_\epsilon^2v^2 + \chi_\epsilon \chi_\epsilon'' v^2 dr \le 0.
\end{align*}
Since on $r\ge R_\epsilon+C$, $|- 3Q_\omega^2 + 5Q_\omega^4|\le \frac{\ell_\epsilon(\ell_\epsilon+1)}{2r^2}$, we have
\begin{align*}
   \int_0^{+\infty}  \left(\frac{\ell_\epsilon(\ell_\epsilon+1)-2}{r^2}-\omega\right)\chi_\epsilon^2v^2 + \chi_\epsilon \chi_\epsilon'' v^2 dr \le 0,
\end{align*}
hence
\begin{align*}
    \ell_\epsilon(\ell_\epsilon+1)-2\le \frac{\int_0^{+\infty}  \omega\chi_\epsilon^2v^2 + |\chi_\epsilon \chi_\epsilon''| v^2 dr}{\int_0^{+\infty}  \frac{\chi_\epsilon^2v^2}{r^2} dr}.
\end{align*}
Note that since $\chi_\epsilon \chi_\epsilon''$ is supported on $[R_\epsilon+C+\sqrt{R_\epsilon}, R_\epsilon+C+\sqrt{R_\epsilon}+1]$, we have, for any $R_\epsilon + C \le r \le R_\epsilon + C+\sqrt{R_\epsilon}$,
\begin{align*}
    \int_0^{+\infty}|\chi_\epsilon \chi_\epsilon''v^2| dr &\lesssim v^2(R_\epsilon+C+1)\left(\frac{R_\epsilon+C+\sqrt{R_\epsilon}}{R_\epsilon+C+1}\right)^{1-\sqrt{2\ell_\epsilon(\ell_\epsilon+1)+1}}\\
    &\lesssim \int_{R_\epsilon+C}^{R_\epsilon+C+1} v^2(r) dr \, e^{-c\sqrt{R_\epsilon}}.
\end{align*}
The last inequality holds because 
$$\sqrt{2\ell_\epsilon(\ell_\epsilon+1)+1}\gtrsim \epsilon^{-1} $$
and
$$\ln \left(\frac{R_\epsilon+C+\sqrt{R_\epsilon}}{R_\epsilon+C+1}\right) \approx \epsilon \sqrt{R_\epsilon}.$$
Moreover,
\begin{align*}
    \int_{R_\epsilon+C}^{+\infty} \chi_\epsilon^2v^2 &= \sum_{k=0}^{+\infty} \int_{R_\epsilon+C+k}^{R_\epsilon+C+k+1} \chi_\epsilon^2v^2 dr := \sum_{k=0}^{+\infty} I_k.
\end{align*}
Then
\begin{align*}
    I_k\le \left(\frac{R_\epsilon+C+k+1}{R_\epsilon+C+1}\right)^{1-\sqrt{2\ell_\epsilon(\ell_\epsilon+1)+1}} I_0,
\end{align*}
and
\begin{align*}
    \int_{R_\epsilon+C}^{+\infty} \frac{\chi_\epsilon^2v^2}{r^2} &= \sum_{k=0}^{+\infty} \int_{R_\epsilon+C+k}^{R_\epsilon+C+k+1} \frac{\chi_\epsilon^2v^2}{r^2} dr \ge \sum_{k=0}^{+\infty} \frac{1}{(R_\epsilon+C+k+1)^2}I_k.
\end{align*}
Hence 
\begin{align*}
    &\int_{R_\epsilon+C}^{+\infty} \frac{\chi_\epsilon^2v^2}{r^2}dr-\frac{1}{(R_\epsilon+C+1)^2} \int_{R_\epsilon+C}^{+\infty} \chi_\epsilon^2v^2 dr\\ 
    \gtrsim& - \frac{1}{(R_\epsilon+C+1)^3} \sum_{k=0}^{+\infty} kI_k\\
    \gtrsim& - \frac{1}{(R_\epsilon+C+1)^3}\sum_{k=0}^{+\infty} k\left(\frac{R_\epsilon+C+k+1}{R_\epsilon+C+1}\right)^{1-\sqrt{2\ell_\epsilon(\ell_\epsilon+1)+1}} I_0\\
    \gtrsim& - \frac{1}{(R_\epsilon+C+1)^3} \int_{R_\epsilon+C}^{+\infty} \chi_\epsilon^2v^2 dr,
\end{align*}
or equivalently
\begin{align*}
     \int_{R_\epsilon+C}^{+\infty} \frac{\chi_\epsilon^2v^2}{r^2}dr \ge \frac{1+\mathcal{O}(\epsilon)}{(R_\epsilon+C+1)^2} \int_{R_\epsilon+C}^{+\infty} \chi_\epsilon^2v^2 dr. 
\end{align*}

Combining these two estimates, we have 
{\large\begin{align*}
   \displaystyle\frac{\int_0^{+\infty}  \omega\chi_\epsilon^2v^2 + |\chi_\epsilon \chi_\epsilon''| v^2 dr}{\int_0^{+\infty}  \frac{\chi_\epsilon^2v^2}{r^2} dr}
    =& \displaystyle \frac{\int_0^{R_\epsilon+C}  \omega\chi_\epsilon^2v^2  dr+ \int_{R_\epsilon+C}^{+\infty} \omega\chi_\epsilon^2v^2 + |\chi_\epsilon \chi_\epsilon''| v^2 dr}{\int_0^{R_\epsilon+C}  \frac{\chi_\epsilon^2v^2}{r^2} dr + \int_{R_\epsilon+C}^{+\infty}  \frac{\chi_\epsilon^2v^2}{r^2} dr}\\
    \le& \displaystyle \frac{\omega\int_0^{R_\epsilon+C}  \chi_\epsilon^2v^2  dr+ (\omega+e^{-c\sqrt{R_\epsilon}})\int_{R_\epsilon+C}^{+\infty} \chi_\epsilon^2v^2 dr}{\frac{1}{(R_\epsilon+C)^2}\int_0^{R_\epsilon+C}  \chi_\epsilon^2v^2 dr + \frac{1+\mathcal{O}(\epsilon)}{(R_\epsilon+C+1)^2} \int_{R_\epsilon+C}^{+\infty} \chi_\epsilon^2v^2 dr}\\
    \le& \, \omega R_\epsilon^2 + \mathcal{O}(\epsilon^{-1}).
\end{align*}}
Hence
\[
\ell_\epsilon(\ell_\epsilon+1)-2
\le \omega R_\epsilon^2+\mathcal O(\epsilon^{-1})
=\frac{9}{256}\epsilon^{-2}+\mathcal O(\epsilon^{-1}),
\]
which implies
\[
\ell_\epsilon\le \frac{3}{16}\epsilon^{-1}+\mathcal O(1).
\]
Together with the lower bound, this proves the proposition.
\end{proof}
Let \(\lambda_\ell\) denote the unique eigenvalue of \(L_+^{(\ell)}\). Then \(\lambda_\ell-\omega\) is the corresponding eigenvalue of \(\widetilde H_\epsilon^{(\ell)}\). Let \(v_\ell\) be the associated normalized eigenfunction. We first prove the following lemma.
\begin{lem}\label{formula-v}
    We have
    \begin{align*}
        \int_0^{+\infty} \frac{v_\ell^2}{r^2}dr= \frac{1}{R_\epsilon^2}+\mathcal{O}(\epsilon^3)
    \end{align*}
   \end{lem}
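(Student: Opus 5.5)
The plan is to show that $v_\ell$ is exponentially localized, on an $O(1)$ scale, around the interface $r=R_\epsilon$, uniformly in $0\le\ell\le\ell_\epsilon$. We then Taylor expand the weight $r^{-2}$ about $R_\epsilon$. Since $\|v_\ell\|_{L^2}=1$, we write
\[
\int_0^{\infty}\frac{v_\ell^2}{r^2}\,dr-\frac1{R_\epsilon^2}
=\int_0^\infty\Bigl(\frac1{r^2}-\frac1{R_\epsilon^2}\Bigr)v_\ell^2\,dr .
\]
On $|r-R_\epsilon|\le R_\epsilon/2$ we have $\bigl|r^{-2}-R_\epsilon^{-2}\bigr|\lesssim |r-R_\epsilon|R_\epsilon^{-3}$. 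So if $\int|r-R_\epsilon|\,v_\ell^2\,dr=O(1)$, this part contributes $O(R_\epsilon^{-3})=O(\epsilon^3)$ by Proposition~\ref{asymptotic-R}. It then remains to show that the regions $r\le R_\epsilon/2$ and $r\ge 3R_\epsilon/2$ contribute $O(\epsilon^3)$. In fact we expect them to be exponentially small in $R_\epsilon$.

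\emph{Interior decay.} The eigenvalue equation reads
\[
v_\ell''=\Bigl(-3Q_\omega^2+5Q_\omega^4+\frac{\ell(\ell+1)}{r^2}+\omega-\lambda_\ell\Bigr)v_\ell .
\]
On the plateau $r\le R_\epsilon-C$, Corollary~\ref{cor:P0-approx} and the plateau estimates give $-3Q_\omega^2+5Q_\omega^4\ge V_{-,\epsilon}-\delta\ge \tfrac12$. Moreover $\lambda_\ell<\omega$, so the coefficient is at least $a^2:=\tfrac12$ uniformly in $\ell$. We use $v_\ell>0$ (ground state of its sector) and $v_\ell(0)=0$. The comparison lemma on $[0,R_\epsilon-C]$ against $\sinh(ar)$ then gives
\[
v_\ell(r)\le v_\ell(R_\epsilon-C)\,\frac{\sinh(ar)}{\sinh(a(R_\epsilon-C))}.
\]
A local $H^1\to L^\infty$ bound from the ODE on a unit interval gives $v_\ell(R_\epsilon-C)\lesssim1$. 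Together with $\sinh(ar)/r\lesssim e^{ar}$, this bounds both $\int_0^{R_\epsilon-C}|r-R_\epsilon|v_\ell^2$ and $\int_0^{R_\epsilon/2}v_\ell^2/r^2$ by $O(1)$ and $O(e^{-cR_\epsilon})$ respectively.

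\emph{Exterior decay.} This is the main obstacle. The natural decay rate $\sqrt{\omega-\lambda_\ell}$ degenerates as $\ell\to\ell_\epsilon$, because $\lambda_{\ell_\epsilon}$ approaches $\omega$. We therefore use the centrifugal term instead, as in the proof of Proposition~\ref{prop:ell}. For $r\ge R_\epsilon+C$ the nonlinear potential is exponentially small, so
\[
v_\ell''\ge\Bigl(\frac{\ell(\ell+1)}{2r^2}+\omega-\lambda_\ell\Bigr)v_\ell .
\]
\begin{itemize}
\item When $\ell\ge c_1\epsilon^{-1}$, the coefficient is $\gtrsim1$ on $[R_\epsilon+C,2R_\epsilon]$. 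This gives exponential decay $e^{-c(r-R_\epsilon)}$ there, and the Euler-type comparison gives the polynomial tail with exponent $\gtrsim\epsilon^{-1}$ beyond $2R_\epsilon$.
\item When $\ell\le c_1\epsilon^{-1}$, we use $\omega-\lambda_\ell\ge\omega/2$ instead. This follows from the crude upper bound on $\lambda_\ell$ obtained with the trial function $rQ_\omega'$ in the lower-bound part of Proposition~\ref{prop:ell}, which gives $\lambda_\ell\lesssim\ell^2\epsilon^2$.
\end{itemize}
In both cases the exterior contribution to $\int|r-R_\epsilon|v_\ell^2$ is $O(1)$, and the contribution from $r\ge3R_\epsilon/2$ is $e^{-cR_\epsilon}$.

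\emph{Conclusion.} On the transition strip $|r-R_\epsilon|\le C$ the integrand is trivially bounded. Combining the three regions yields $\int|r-R_\epsilon|v_\ell^2\,dr=O(1)$ and the asserted estimate. The delicate point is making every constant uniform in $\ell$ up to $\ell_\epsilon$, where the spectral gap $\omega-\lambda_\ell$ closes and only the $r^{-2}$ barrier supplies decay. If the split at $c_1\epsilon^{-1}$ above does not close, we would try first to replace the two cases by the single lower bound $\ell(\ell+1)/r^2+\omega-\lambda_\ell\gtrsim1$ on $[R_\epsilon,2R_\epsilon]$, derived from $\lambda_\ell\le\omega$ and the asymptotics for $\lambda_\ell$.
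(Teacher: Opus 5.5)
Your proposal is correct and follows essentially the paper's route: use $\|v_\ell\|_2=1$ to reduce to a first-moment bound $\int|r-R_\epsilon|\,v_\ell^2\,dr=O(1)$ (plus exponentially small outer pieces), then handle the plateau by the $\sinh$ comparison, the transition strip trivially, and the exterior by the centrifugal/Euler comparison, with input from the Rayleigh bound for the trial state $rQ_\omega'$. Your exterior split at $\ell\sim c_1\epsilon^{-1}$ does close once $c_1$ is small enough that $\lambda_\ell\le\omega/2$ in the low range. In that range you should absorb the negative tail of $-3Q_\omega^2+5Q_\omega^4$ into this gap, since for small $\ell$ there is no centrifugal term to absorb it into. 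The split merely replaces the paper's single inequality $\frac{\ell(\ell+1)}{r^2}+\omega-\lambda_\ell\ge\frac{\omega(R_\epsilon^2-CR_\epsilon)}{r^2}$. Do not use your fallback through the asymptotics of $\lambda_\ell$: that would be circular, because those asymptotics are derived from this lemma.
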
 
\begin{proof}
Since $\|v_\ell\|_{L^2(0, +\infty)}=1$, it is enough to prove that
\[
\int_0^{\infty}\left(\frac1{r^2}-\frac1{R_\epsilon^2}\right)v_\ell(r)^2\,dr=\mathcal O(\epsilon^3).
\]
Multiplying by $R_\epsilon^2r^2$ and using $R_\epsilon\sim\epsilon^{-1}$, this is equivalent to showing
\begin{equation}\label{eq:weighted-r-minus-R}
\int_0^{\infty}\frac{(r-R_\epsilon)(r+R_\epsilon)}{r^2}v_\ell(r)^2\,dr=\mathcal O(\epsilon).
\end{equation}
Choose a sufficiently large constant \(C\) such that
$- 3Q_\omega^2  + 5Q_\omega^4\gtrsim 1$ for $r\le R_\epsilon-C$ and $|- 3Q_\omega^2  + 5Q_\omega^4 |\le \frac{1}{100} e^{-c|r-R_\epsilon|}$ for $r\ge R_\epsilon+C$ with some universal constant $c>0$. We divide the integral into three regions: the left tail $[0,R_\epsilon-C]$, the transition region $[R_\epsilon-C,R_\epsilon+C]$, and the right tail $[R_\epsilon+C,\infty)$.

On the transition strip one simply uses boundedness of the coefficient and normalization of $v_\ell$:
\[
\left|\int_{R_\epsilon-C}^{R_\epsilon+C}\frac{(r-R_\epsilon)(r+R_\epsilon)}{r^2}v_\ell(r)^2\,dr\right|
\lesssim \frac{C}{R_\epsilon}=\mathcal O(\epsilon).
\]
Thus it remains to estimate the two tails.

\smallskip
\noindent
\emph{Left tail.}
The eigenfunction solves
\[
-v_\ell''-3Q_\omega^2v_\ell+5Q_\omega^4v_\ell+\frac{\ell(\ell+1)}{r^2}v_\ell=(\lambda_\ell-\omega)v_\ell.
\]
On the left tail, the potential $-3Q_\omega^2+5Q_\omega^4$ is strictly positive because $Q_\omega$ stays close to $Q_\epsilon^*$.  Hence there exists $c_0>0$ such that for $r\le R_\epsilon-C$,
\[
v_\ell''(r)\ge c_0v_\ell(r).
\]
The eigenvalue equation and \(\|v_\ell\|_2=1\) also give
\(\|v_\ell'\|_2\le C\), uniformly in
\(0\le\ell\le\ell_\epsilon\); hence the one-dimensional Sobolev
inequality gives \(\|v_\ell\|_\infty\le C\).
For any $0\le r\le r_0\le R_\epsilon-C$, comparing with the explicit solution of $y''=c_0y$ that vanishes at the origin and matches $v_\ell$ at $r_0$, we obtain
\[
v_\ell(r)\le \frac{e^{\sqrt{c_0}r}-e^{-\sqrt{c_0}r}}{e^{\sqrt{c_0}r_0}-e^{-\sqrt{c_0}r_0}}v_\ell(r_0),
\qquad 0\le r\le r_0,
\]
for some $c>0$.  We deduce
\[
\left|\int_0^{R_\epsilon-C}\frac{(r-R_\epsilon)(r+R_\epsilon)}{r^2}v_\ell(r)^2\,dr\right|
\lesssim R_\epsilon \int_0^{R_\epsilon-C}\frac{e^{-c_0(R_\epsilon-r)}}{1+r^2}\,dr
\lesssim \frac1{R_\epsilon}=\mathcal O(\epsilon).
\]

\smallskip
\noindent
\emph{Right tail.}
Since \(\lambda_\ell-\omega\) is the only eigenvalue of \(\widetilde H_\epsilon^{(\ell)}\), we have
\begin{align*}
    &\left\langle \left(\widetilde H_\epsilon^{(\ell)}\right)(rQ_\omega'), rQ_\omega'\right\rangle_{L^2(0,\infty)}\\
    =&\left\langle \left(\widetilde H_\epsilon^{(1)}+\frac{\ell(\ell+1)-2}{r^2}\right)(rQ_\omega'), rQ_\omega'\right\rangle_{L^2(0,\infty)}\\
    =&-\omega\int_0^\infty (rQ_\omega')^2\,dr
    +(\ell^2+\ell-2)\int_0^\infty (Q_\omega')^2\,dr\\
    \ge&(\lambda_\ell-\omega)\int_0^\infty (rQ_\omega')^2\,dr.
\end{align*}
Hence
\[
\ell^2+\ell-2
\geq
\frac{\lambda_\ell\int_0^\infty(rQ_\omega')^2\,dr}
{\int_0^\infty(Q_\omega')^2\,dr}
=\lambda_\ell\left(R_\epsilon^2+\mathcal{O}(\epsilon^{-1})\right),
\]
where the last equality follows from the same argument as in Proposition~\ref{asymptotic-R}. Hence, for \(r\geq R_\epsilon+C\),
the following uniform lower bound holds after increasing the fixed
constant \(C\):
\[
 \partial_r^2v_\ell(r)\ge
 \frac{\omega R_\epsilon^2}{4r^2}v_\ell(r).
\]
For \(\ell=0\), this follows from
\(\omega-\lambda_0\ge\omega\) and the exterior decay of the potential.
For \(\ell\ge1\), monotonicity from the translation sector gives
\(0\le\lambda_\ell<\omega\), while the preceding Rayleigh inequality gives
\[
 \ell(\ell+1)\ge
 \lambda_\ell(R_\epsilon^2-CR_\epsilon)+2.
\]
Thus
\[
 \frac{\ell(\ell+1)}{r^2}+\omega-\lambda_\ell
 \ge \frac{\omega(R_\epsilon^2-CR_\epsilon)}{r^2},
\]
and the exponentially decaying negative part of
\(-3Q_\omega^2+5Q_\omega^4\) is absorbed by enlarging \(C\).
Comparing with the Euler equation  $y''=\frac{\omega R_\epsilon^2}{4r^2}y$, we deduce that for any $r\ge r_0\ge R_\epsilon+C$,
\[
v_\ell(r)\le v_\ell(r_0)\left(\frac{r_0}{r}\right)^{cR_\epsilon}
\]
for some universal constant $c>0$.  Consequently, if we set
\[
I_k:=\int_{R_\epsilon+C+k}^{R_\epsilon+C+k+1}|r-R_\epsilon|v_\ell(r)^2\,dr,
\]
then
\[
I_k\lesssim (k+1)\left(\frac{R_\epsilon+C+1}{R_\epsilon+C+k}\right)^{cR_\epsilon}
\int_{R_\epsilon+C}^{R_\epsilon+C+1}v_\ell(r)^2\,dr.
\]
Since the exponent $cR_\epsilon$ is large, 
\[
\sum_{k=0}^{\infty} I_k\lesssim 1,
\]
and hence
\begin{align*}
\left|\int_{R_\epsilon+C}^{\infty}\frac{(r-R_\epsilon)(r+R_\epsilon)}{r^2}v_\ell(r)^2\,dr\right|
&\lesssim \frac1{R_\epsilon}\int_{R_\epsilon+C}^{\infty}|r-R_\epsilon|v_\ell(r)^2\,dr\\
&\lesssim \frac1{R_\epsilon}\sum_{k=0}^{\infty}I_k
\lesssim \frac1{R_\epsilon}=\mathcal O(\epsilon).
\end{align*}
Combining the above estimates proves \eqref{eq:weighted-r-minus-R}, and therefore
\[
\int_0^{\infty}\frac{v_\ell(r)^2}{r^2}\,dr
=\frac1{R_\epsilon^2}+\mathcal O(\epsilon^3).
\]
\end{proof}

\begin{prop}\label{prop:eigenvaluegap}
    We have \(\lambda_{\ell}-\lambda_{\ell-1}=\frac{32}{3}\ell\epsilon^2+\mathcal{O}(\ell\epsilon^3)\).
\end{prop}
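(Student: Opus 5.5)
The plan is to compare the two consecutive sectors by using each eigenfunction as a trial state in the other sector's quadratic form. Lemma~\ref{formula-v} will convert the difference of centrifugal terms into an explicit multiple of $R_\epsilon^{-2}$. The key identity is
\[
\widetilde H_\epsilon^{(\ell)}-\widetilde H_\epsilon^{(\ell-1)}=\frac{\ell(\ell+1)-(\ell-1)\ell}{r^2}=\frac{2\ell}{r^2}.
\]
Since each $\widetilde H_\epsilon^{(\ell)}$ has exactly one eigenvalue $\lambda_\ell-\omega$ below $0$ and nothing else below $0$, its ground-state energy is the infimum of its Rayleigh quotient. Testing $\widetilde H_\epsilon^{(\ell)}$ against the normalized eigenfunction $v_{\ell-1}$ gives the upper bound
\[
\lambda_\ell-\omega\le\langle\widetilde H_\epsilon^{(\ell)}v_{\ell-1},v_{\ell-1}\rangle=\lambda_{\ell-1}-\omega+2\ell\int_0^\infty\frac{v_{\ell-1}^2}{r^2}\,dr.
\]
Testing $\widetilde H_\epsilon^{(\ell-1)}$ against $v_\ell$ gives the lower bound
\[
\lambda_{\ell-1}-\omega\le\lambda_\ell-\omega-2\ell\int_0^\infty\frac{v_\ell^2}{r^2}\,dr.
\]
Together these sandwich the gap:
\[
2\ell\int_0^\infty\frac{v_\ell^2}{r^2}\,dr\le\lambda_\ell-\lambda_{\ell-1}\le 2\ell\int_0^\infty\frac{v_{\ell-1}^2}{r^2}\,dr.
\]
The first test is only meaningful when $\ell\le\ell_\epsilon$, so $v_{\ell-1}$ exists. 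The same comparison also shows that $\lambda_{\ell-1}<\lambda_\ell$.

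Next, Lemma~\ref{formula-v} applies uniformly to every $0\le\ell\le\ell_\epsilon$. It says both integrals equal $R_\epsilon^{-2}+\mathcal O(\epsilon^3)$, so
\[
\lambda_\ell-\lambda_{\ell-1}=\frac{2\ell}{R_\epsilon^2}+\mathcal O(\ell\epsilon^3).
\]
Finally, Proposition~\ref{asymptotic-R} gives $R_\epsilon=\frac{\sqrt3}{4\epsilon}(1+\mathcal O(\epsilon))$, hence $R_\epsilon^{-2}=\frac{16}{3}\epsilon^2+\mathcal O(\epsilon^3)$. Multiplying by $2\ell$ yields $\frac{32}{3}\ell\epsilon^2+\mathcal O(\ell\epsilon^3)$, which is the claim.

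The main obstacle is already isolated in Lemma~\ref{formula-v}: the error $\mathcal O(\epsilon^3)$ must be uniform in $\ell$ all the way up to $\ell_\epsilon$. That uniformity is what makes the error term $\mathcal O(\ell\epsilon^3)$ rather than something worse. In the present argument I only need to check that the hypotheses of that lemma hold for both $v_\ell$ and $v_{\ell-1}$, namely that both exist and are normalized. I also need to note that min--max is legitimate for these trial functions, which lie in $H^1_0(0,\infty)$ and carry the appropriate weight $r^{-2}$ integrability near the origin. This integrability follows from the Dirichlet condition and the behavior $v_\ell\sim r^{\ell+1}$ at $0$. In the case $\ell=1$, $v_0$ has $v_0(r)\sim r$, so $v_0^2/r^2$ is integrable and the first test is fine.
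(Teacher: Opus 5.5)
Your proposal is correct and matches the paper's proof. Both use the identity $\widetilde H_\epsilon^{(\ell)}=\widetilde H_\epsilon^{(\ell-1)}+2\ell r^{-2}$ and test with $v_{\ell-1}$ in sector $\ell$ and with $v_\ell$ in sector $\ell-1$, which sandwiches $\lambda_\ell-\lambda_{\ell-1}$ between $2\ell\int_0^\infty v_\ell^2r^{-2}\,dr$ and $2\ell\int_0^\infty v_{\ell-1}^2r^{-2}\,dr$, and then conclude with Lemma~\ref{formula-v} and Proposition~\ref{asymptotic-R}. Your remarks on the range $1\le\ell\le\ell_\epsilon$ and on the integrability of $v^2/r^2$ for the trial states only make explicit what the paper leaves implicit.
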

\begin{proof}
We compare neighboring sectors by the min--max principle.  Since
\[
\widetilde H_\epsilon^{(\ell)}=\widetilde H_\epsilon^{(\ell-1)}+\frac{2\ell}{r^2},
\]
we may use the normalized eigenfunction $v_{\ell-1}$ of $\widetilde H_\epsilon^{(\ell-1)}$ as a trial state for the $\ell$-th sector.  This gives
\begin{align*}
\lambda_{\ell}-\omega
&\le \langle \widetilde H_\epsilon^{(\ell)}v_{\ell-1},v_{\ell-1}\rangle \\
&=\left\langle \left(\widetilde H_\epsilon^{(\ell-1)}+\frac{2\ell}{r^2}\right)v_{\ell-1},v_{\ell-1}\right\rangle \\
&=(\lambda_{\ell-1}-\omega)+2\ell\int_0^{\infty}\frac{v_{\ell-1}(r)^2}{r^2}\,dr.
\end{align*}
Equivalently,
\begin{equation}\label{eq:upper-gap-bound}
\lambda_{\ell}-\lambda_{\ell-1}
\le 2\ell\int_0^{\infty}\frac{v_{\ell-1}(r)^2}{r^2}\,dr.
\end{equation}

Conversely, using the normalized eigenfunction $v_\ell$ of $\widetilde H_\epsilon^{(\ell)}$ as a trial state for the lowest eigenvalue in the $(\ell-1)$-st sector, we obtain
\begin{align*}
\lambda_{\ell-1}-\omega
&\le \langle \widetilde H_\epsilon^{(\ell-1)}v_\ell,v_\ell\rangle \\
&=\left\langle \left(\widetilde H_\epsilon^{(\ell)}-\frac{2\ell}{r^2}\right)v_\ell,v_\ell\right\rangle \\
&=(\lambda_\ell-\omega)-2\ell\int_0^{\infty}\frac{v_\ell(r)^2}{r^2}\,dr.
\end{align*}
Hence
\begin{equation}\label{eq:lower-gap-bound}
\lambda_{\ell}-\lambda_{\ell-1}
\ge 2\ell\int_0^{\infty}\frac{v_\ell(r)^2}{r^2}\,dr.
\end{equation}
Combining \eqref{eq:upper-gap-bound} and \eqref{eq:lower-gap-bound}, we arrive at
\[
2\ell\int_0^{\infty}\frac{v_\ell(r)^2}{r^2}\,dr
\le \lambda_{\ell}-\lambda_{\ell-1}
\le 2\ell\int_0^{\infty}\frac{v_{\ell-1}(r)^2}{r^2}\,dr.
\]
By Lemma~\ref{formula-v}, both integrals equal $R_\epsilon^{-2}+\mathcal O(\epsilon^3)$.  Therefore
\[
\lambda_{\ell}-\lambda_{\ell-1}
=\frac{2\ell}{R_\epsilon^2}+\mathcal O(\ell\epsilon^3).
\]
Finally, Proposition~\ref{asymptotic-R} gives
\[
\frac{2}{R_\epsilon^2}=\frac{32}{3}\epsilon^2+\mathcal O(\epsilon^3),
\]
which yields the claimed expansion.
\end{proof}
Summing the above relation from
\(2\) to \(\ell\) and using \(\lambda_1=0\) gives the following corollary.
\begin{cor}\label{prop:eigenvalueposition}
    \begin{align*}
        \lambda_\ell= \frac{16}{3}(\ell^2+\ell-2)\epsilon^2 + \mathcal{O}((\ell+1)^2 \epsilon^3), \quad \ell\ge 0.
        \end{align*}
\end{cor}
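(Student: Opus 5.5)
The plan is to telescope Proposition~\ref{prop:eigenvaluegap} starting from the translation sector, where the eigenvalue is known exactly. By translation invariance, $L_+^{(1)}Q_\omega'=0$, and $Q_\omega'$ has one sign on $(0,\infty)$. Since each sector carries at most one eigenvalue below $\omega$, this forces $\lambda_1=0$. For $2\le\ell\le\ell_\epsilon$ I write
\[
\lambda_\ell=\lambda_1+\sum_{j=2}^{\ell}(\lambda_j-\lambda_{j-1}),
\]
and insert the expansion $\lambda_j-\lambda_{j-1}=\frac{32}{3}j\epsilon^2+\mathcal O(j\epsilon^3)$. The main terms sum to
\[
\frac{32}{3}\epsilon^2\sum_{j=2}^{\ell}j=\frac{32}{3}\epsilon^2\cdot\frac{\ell^2+\ell-2}{2}=\frac{16}{3}(\ell^2+\ell-2)\epsilon^2 ,
\]
which is the stated leading term. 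The errors sum to $\mathcal O(\epsilon^3\sum_{j\le\ell}j)=\mathcal O((\ell+1)^2\epsilon^3)$. For $\ell=1$ the formula is exact, because $\ell^2+\ell-2=0$.

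The case $\ell=0$ runs the gap proposition backwards with $j=1$: $\lambda_0=\lambda_1-(\lambda_1-\lambda_0)=-\frac{32}{3}\epsilon^2+\mathcal O(\epsilon^3)$. This agrees with $\frac{16}{3}(0+0-2)\epsilon^2=-\frac{32}{3}\epsilon^2$. Proposition~\ref{prop:eigenvaluegap} was proved for every $\ell\ge1$ for which both sectors have an eigenvalue, and $j=1$ qualifies because $L_+^{(0)}$ has its (negative) eigenvalue.

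The point I expect to need care is uniformity of the constants. The constant in the $\mathcal O(j\epsilon^3)$ error must be independent of $j$ for the summation to be legitimate. This reduces to checking that Lemma~\ref{formula-v} holds uniformly in $0\le\ell\le\ell_\epsilon$. Its proof uses bounds that are uniform in $\ell$: the bound on $\|v_\ell'\|_2$, left-tail convexity that does not depend on $\ell$, and the right-tail Euler comparison with exponent $cR_\epsilon$, which uses only $\ell(\ell+1)\ge\lambda_\ell(R_\epsilon^2-CR_\epsilon)+2$. I would re-read that proof to confirm nothing depends on $\ell$ beyond these.

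Finally, the corollary is only meaningful for $\ell\le\ell_\epsilon$, where the eigenvalues exist. In that range $(\ell+1)^2\epsilon^3\lesssim\epsilon$, so the error term stays small relative to $\omega$.
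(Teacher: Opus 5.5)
Your proposal is correct and follows the paper's route: the paper also proves the corollary by telescoping the gap expansion of Proposition~\ref{prop:eigenvaluegap} from $2$ to $\ell$ starting at $\lambda_1=0$, and the $\ell=0$ case comes from the $j=1$ gap exactly as you describe. Your point that the $\mathcal O(j\epsilon^3)$ constant must be uniform in $j$, which traces back to the uniformity of Lemma~\ref{formula-v} over $0\le\ell\le\ell_\epsilon$, is left implicit in the paper.
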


\begin{cor}[Multiplicity of the discrete eigenvalues]\label{prop:multiplicity}
Suppose $0\le \ell\le \ell_\epsilon$ and let $\lambda_\ell$ be the unique eigenvalue of $L_+^{(\ell)}$ in $(-\infty,\omega)$. Then the corresponding eigenspace of $L_+$ has dimension exactly $2\ell+1$.    
\end{cor}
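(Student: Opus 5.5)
The plan is to use the orthogonal decomposition $L_+=\bigoplus_{\ell'}L_+^{(\ell')}$ together with the strict ordering of the sector eigenvalues, and then count spherical harmonics.

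\emph{Step 1 (a vector in the eigenspace).} Let $u\in\operatorname{Dom}(L_+)$ satisfy $L_+u=\lambda_\ell u$. Expand $u=\sum_{\ell',m}u_{\ell' m}(r)Y_{\ell' m}$. Since $L_+$ acts diagonally, each component satisfies $L_+^{(\ell')}u_{\ell' m}=\lambda_\ell u_{\ell' m}$ in $L^2(r^2dr)$. So every nonzero $u_{\ell' m}$ is an eigenfunction of $L_+^{(\ell')}$ with eigenvalue $\lambda_\ell<\omega$.

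\emph{Step 2 (only the sector $\ell$ contributes).} By the preceding Proposition, each $L_+^{(\ell')}$ has at most one eigenvalue below $\omega$, denoted $\lambda_{\ell'}$ when it exists. So it suffices to show $\lambda_{\ell'}\neq\lambda_\ell$ for $\ell'\neq\ell$. Strict monotonicity follows from the identity $\widetilde H_\epsilon^{(\ell'+1)}=\widetilde H_\epsilon^{(\ell')}+2(\ell'+1)/r^2$. Test the form of $\widetilde H_\epsilon^{(\ell')}$ with the eigenfunction $v_{\ell'+1}$; as in \eqref{eq:lower-gap-bound}, this gives $\lambda_{\ell'+1}-\lambda_{\ell'}\ge 2(\ell'+1)\int v_{\ell'+1}^2r^{-2}\,dr>0$, because $v_{\ell'+1}\not\equiv0$. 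This uses only min--max, not the asymptotics of Proposition~\ref{prop:eigenvaluegap}, so it is valid for every $\ell'$. The same argument shows that if $L_+^{(\ell')}$ had no eigenvalue below $\omega$, then no higher sector has one either, so sectors $\ell'>\ell_\epsilon$ contribute nothing. Hence $u_{\ell' m}=0$ for $\ell'\neq\ell$.

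\emph{Step 3 (counting).} Within sector $\ell$, the eigenvalue $\lambda_\ell$ of $L_+^{(\ell)}$ is simple. This follows from the regular endpoint at $r=0$: for $H^{(\ell)}$, solutions that are $L^2$ near the origin with the Friedrichs/Dirichlet condition form a one-dimensional space, since the limit-point or regular behaviour selects the $r^{\ell+1}$ branch. Therefore $u_{\ell m}=c_m\,v_\ell/r$ for constants $c_m$, and $u=\frac{v_\ell}{r}\sum_{m=-\ell}^{\ell}c_mY_{\ell m}$. Conversely, each such function lies in $\operatorname{Dom}(L_+)$ and is an eigenfunction. The $Y_{\ell m}$ are orthonormal, so these $2\ell+1$ functions are linearly independent and the eigenspace has dimension exactly $2\ell+1$.

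The main obstacle is the simplicity in Step 3, together with the justification that the sector decomposition commutes with the domain. I would handle simplicity by the Wronskian argument: two $L^2$ eigenfunctions of $H^{(\ell)}$ vanishing at $0$ have constant Wronskian, which vanishes because both decay at infinity below the essential spectrum. For $\ell=0$ the same argument uses the Dirichlet condition.
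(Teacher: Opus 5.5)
Your proof is correct and follows the paper's argument: decompose into angular sectors, use that each sector has at most one eigenvalue below $\omega$, show the $\lambda_{\ell'}$ are distinct across sectors, and count the $2\ell+1$ harmonics $Y_{\ell m}$. There are two minor tightenings relative to the paper. You get distinctness directly from the one-sided min--max bound \eqref{eq:lower-gap-bound} rather than from the asymptotic gap formula of Proposition~\ref{prop:eigenvaluegap}, so neither smallness of $\epsilon$ nor Lemma~\ref{formula-v} is needed. You also prove simplicity of $\lambda_\ell$ within its sector via the Wronskian, which the paper leaves implicit.
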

\begin{proof}
By Proposition~\ref{prop:eigenvaluegap}, the eigenvalues \(\lambda_\ell\) are distinct. Let \(v_\ell\) be a nontrivial eigenfunction of \(H^{(\ell)}\) associated with \(\lambda_\ell\). For each \(m=-\ell,\dots,\ell\), define
\[
\Psi_{\ell m}(r,\theta):=\frac{v_\ell(r)}{r}Y_{\ell m}(\theta).
\]
These are \(2\ell+1\) linearly independent eigenfunctions of \(L_+\) associated with \(\lambda_\ell\).
 Since the eigenvalues belonging to distinct angular sectors are distinct by Proposition~\ref{prop:eigenvaluegap}, and each sector has at most one eigenvalue below \(\omega\), these functions exhaust the full eigenspace.
\end{proof}

\begin{proof}[Proof of Theorem~\ref{theorem-scalar}]
Proposition~3.1 shows that in each angular-momentum sector \(\ell\) there is
at most one eigenvalue below \(\omega\). Proposition \ref{prop:ell} estimates the maximal
sector index carrying a discrete eigenvalue:
\[
\ell_\epsilon=\frac{3}{16}\epsilon^{-1}+\mathcal O(1).
\]
Proposition \ref{prop:eigenvaluegap} and Corollary \ref{prop:eigenvalueposition} then estimate the eigenvalue gap
\[
\lambda_\ell-\lambda_{\ell-1}
=\frac{32}{3}\ell\epsilon^2+\mathcal O(\ell\epsilon^3)
\]
and gives
\[
\lambda_\ell=\frac{16}{3}(\ell^2+\ell-2)\epsilon^2
+\mathcal O((\ell+1)^2\epsilon^3).
\]
Corollary~\ref{prop:multiplicity} then shows that the corresponding
eigenspace of the full three-dimensional operator has dimension exactly
\(2\ell+1\). This proves the
theorem.
\end{proof}

\section{Internal modes of the matrix linearized Schr\"odinger operator}

We now study the internal modes of the full matrix operator obtained by linearizing the cubic--quintic nonlinear Schr\"odinger equation around the standing wave
\[
    u(t,x)=e^{i\omega t}Q_\omega(x).
\]
Recall that \(Q_\omega\) solves
\begin{equation*}
    -\Delta Q_\omega+\omega Q_\omega-Q_\omega^3+Q_\omega^5=0.
\end{equation*}
Let
\[
    u(t,x)=e^{i\omega t}\bigl(Q_\omega(x)+v(t,x)\bigr),
    \qquad
    v=a+ib,
\]
where \(a,b\) are real-valued. Linearizing
\[
    i\partial_tu=-\Delta u-|u|^2u+|u|^4u
\]
around \(Q_\omega\), we obtain
\begin{equation*}
    \partial_t
    \begin{pmatrix}
        a\\ b
    \end{pmatrix}
    =
    \mathcal L_\omega
    \begin{pmatrix}
        a\\ b
    \end{pmatrix},
\end{equation*}
where
\[
    \mathcal L_\omega=
    \begin{pmatrix}
        0&L_-\\
        -L_+&0
    \end{pmatrix},
\]
with
\begin{equation*}
    L_+
    =
    -\Delta+\omega-3Q_\omega^2+5Q_\omega^4,
    \qquad
    L_-
    =
    -\Delta+\omega-Q_\omega^2+Q_\omega^4 .
\end{equation*}

Since \(Q_\omega\) decays exponentially at spatial infinity, the essential spectrum of \(\mathcal L_\omega\) is determined by the limiting free operator
\[
    \mathcal L_{\omega,\infty}
    =
    \begin{pmatrix}
        0&-\Delta+\omega\\
        \Delta-\omega&0
    \end{pmatrix}.
\]
Therefore
\begin{equation*}
    \sigma_{\mathrm{ess}}(\mathcal L_\omega)
    =
    i(-\infty,-\omega]\cup i[\omega,\infty).
\end{equation*}

The internal modes of \(\mathcal L_\omega\) are nonzero eigenvalues
\[
    \mu=\pm i\nu,
    \qquad
    0<\nu<\omega,
\]
which can also be written in real form as follows. Suppose
\[
    \mathcal L_\omega
    \begin{pmatrix}
        f\\ g
    \end{pmatrix}
    =
    i\nu
    \begin{pmatrix}
        f\\ g
    \end{pmatrix}.
\]
Equivalently, after passing to a real basis, one obtains
\begin{equation}
    L_+ f=\nu g,
    \qquad
    L_- g=\nu f.
    \label{eq:real-internal-mode-system}
\end{equation}
Thus the internal mode problem is reduced to finding nontrivial real-valued pairs \((f,g)\) and frequencies \(\nu\in(0,\omega)\) satisfying \eqref{eq:real-internal-mode-system}.

Since \(Q_\omega\) is radial, the matrix operator \(\mathcal L_\omega\) decomposes according to spherical harmonics. Writing
\[
    f(r,\theta)
    =
    \sum_{\ell=0}^{\infty}
    \sum_{m=-\ell}^{\ell}
    f_{\ell m}(r)Y_{\ell m}(\theta),
\]
and
\[
    g(r,\theta)
    =
    \sum_{\ell=0}^{\infty}
    \sum_{m=-\ell}^{\ell}
    g_{\ell m}(r)Y_{\ell m}(\theta),
\]
we obtain a family of one-dimensional matrix operators
\begin{equation*}
   \mathcal L^{(\ell)}
    =
    \begin{pmatrix}
        0&L_-^{(\ell)}\\
        -L_+^{(\ell)}&0
    \end{pmatrix},
\end{equation*}
where
\begin{equation*}
    L_+^{(\ell)}
    =
    -\partial_r^2-\frac{2}{r}\partial_r
    +\omega-3Q_\omega^2+5Q_\omega^4
    +\frac{\ell(\ell+1)}{r^2},
\end{equation*}
and
\begin{equation*}
    L_-^{(\ell)}
    =
    -\partial_r^2-\frac{2}{r}\partial_r
    +\omega-Q_\omega^2+Q_\omega^4
    +\frac{\ell(\ell+1)}{r^2}.
\end{equation*}
Hence, it remains to find an internal mode in each angular-momentum sector \(\ell\) by solving
\begin{equation}
    L_+^{(\ell)}f=\nu g,
    \qquad
    L_-^{(\ell)}g=\nu f.
    \label{eq:angular-internal-mode}
\end{equation}

Define
\[
\begin{aligned}
    H_+^{(\ell)}
    &=-\partial_r^2+\omega-3Q_\omega^2+5Q_\omega^4
      +\frac{\ell(\ell+1)}{r^2},\\
    H_-^{(\ell)}
    &=-\partial_r^2+\omega-Q_\omega^2+Q_\omega^4
      +\frac{\ell(\ell+1)}{r^2},
\end{aligned}
\]
Then \(rL_\pm^{(\ell)}h=H_\pm^{(\ell)}(rh)\). Thus, upon setting \(F=rf\) and \(G=rg\), equation \eqref{eq:angular-internal-mode} becomes
\begin{equation}
    H_+^{(\ell)}F=\nu G,
    \qquad
    H_-^{(\ell)}G=\nu F.
    \label{eq:H-internal-mode}
\end{equation}

By gauge invariance,
\begin{equation*}
    H_-^{(0)}(rQ_\omega)=0.
\end{equation*}
Since \(Q_\omega>0\), the ground-state representation gives
\[
    H_-^{(\ell)}\geq0,
    \qquad
    \ker H_-^{(0)}=\operatorname{span}\{rQ_\omega\},
\]
and \(H_-^{(\ell)}>0\) for every \(\ell\geq1\). 

For $\ell\ge1$, $H_-^{(\ell)}$ is strictly positive; for $\ell=0$ we
work on $\{rQ_\omega\}^{\perp}$, where the restriction of
$H_-^{(0)}$ is strictly positive.  On the corresponding Hilbert space,
define $K^{(\ell)}$ as the self-adjoint operator associated with the
closed, lower-bounded quadratic form
\begin{equation}\label{eq:K-form-definition}
 k_\ell[h]
 :=\left\langle H_+^{(\ell)}(H_-^{(\ell)})^{1/2}h,
                    (H_-^{(\ell)})^{1/2}h\right\rangle,
 \qquad
 K^{(\ell)}=(H_-^{(\ell)})^{1/2}H_+^{(\ell)}
             (H_-^{(\ell)})^{1/2}.
\end{equation}
Here the displayed product is understood in this form sense; this removes
any ambiguity about domains of the unbounded factors.

There is a one-to-one correspondence between the positive eigenvalues of
\(K^{(\ell)}\) below \(\omega^2\) and the internal-mode eigenvalues of
\(\mathcal L^{(\ell)}\). Indeed, suppose
\[
    K^{(\ell)}h=\nu^2h,
    \qquad 0<\nu<\omega.
\]
Set
\[
    F=\bigl(H_-^{(\ell)}\bigr)^{1/2}h,
    \qquad
    G=\nu^{-1}H_+^{(\ell)}F.
\]
Then \(H_+^{(\ell)}F=\nu G\) and \(H_-^{(\ell)}G=\nu F\).
Consequently, with \(f=F/r\) and \(g=G/r\),
\[
    \mathcal L^{(\ell)}
    \binom{f}{\pm i g}
    =
    \pm i\nu\binom{f}{\pm i g}.
\]
Conversely, any eigenvector of \(\mathcal L^{(\ell)}\) with eigenvalue
\(\pm i\nu\), \(0<\nu<\omega\), yields a solution \((F,G)=(rf,rg)\) of
\eqref{eq:H-internal-mode}, and
\[
    h=\bigl(H_-^{(\ell)}\bigr)^{-1/2}F
\]
satisfies \(K^{(\ell)}h=\nu^2h\). These inverse constructions preserve
the eigenspace dimension. Thus an eigenvalue
\(\mu\in(0,\omega^2)\) of \(K^{(\ell)}\) corresponds precisely to the
pair \(\pm i\sqrt{\mu}\) of eigenvalues of \(\mathcal L^{(\ell)}\). We are now ready to prove Theorem \ref{theorem-matrix}:
\begin{proof}
\noindent
We separate our proof in several steps.

\medskip
\emph{Step 1.  Monotonicity in \(\ell\).}
By the conjugation, we only need to consider the positive eigenvalues of $K^{(\ell)}$. We first prove certain monotonicity in $\ell$. Since both potentials converge exponentially to zero as \(r\to\infty\),
\(K^{(\ell)}\) is a relatively compact perturbation of
\[
    \left(
        -\partial_r^2
        +\frac{\ell(\ell+1)}{r^2}+\omega
    \right)^2.
\]
Consequently,
\[
    \sigma_{\mathrm{ess}}\bigl(K^{(\ell)}\bigr)
    =
    [\omega^2,\infty).
\]

For the monotonicity argument below we take \(\ell\ge1\), so that
\(H_-^{(\ell)}\) is strictly positive.  The radial constrained sector will
be treated separately.

Since
$H_-^{(\ell)}\ge c_\ell>0$ for $\ell\ge1$, its inverse is bounded, and
congruence by $(H_-^{(\ell)})^{1/2}$ preserves the dimension of every
finite-dimensional negative subspace.  

We have the factorization
\[
\begin{aligned}
    K^{(\ell)}-\omega^2
    &=
    \bigl(H_-^{(\ell)}\bigr)^{1/2}
    \left(
        H_+^{(\ell)}
        -
        \omega^2\bigl(H_-^{(\ell)}\bigr)^{-1}
    \right)
    \bigl(H_-^{(\ell)}\bigr)^{1/2}.
\end{aligned}
\]
Let \(n_-(\cdot)\) denote the dimension of the negative spectral subspace of an operator. By the min--max principle and the invariance of the negative Morse index under congruence by
\(\bigl(H_-^{(\ell)}\bigr)^{1/2}\),
\[
    n_-\big(K^{(\ell)}-\omega^2\big)
    =
    n_-\big(H_+^{(\ell)}
        -
        \omega^2\bigl(H_-^{(\ell)}\bigr)^{-1}\big).
\]
Since
\[
    H_-^{(\ell+1)}
    =
    H_-^{(\ell)}
    +
    \frac{2\ell+2}{r^2},
\]
and
\[
    H_+^{(\ell+1)}
    =
    H_+^{(\ell)}
    +
    \frac{2\ell+2}{r^2},
\]
we have
\((H_-^{(\ell+1)})^{-1}\le(H_-^{(\ell)})^{-1}\).  Together with the corresponding increase of \(H_+^{(\ell)}\), this gives
$$
 H_+^{(\ell+1)}
        -
        \omega^2\bigl(H_-^{(\ell+1)}\bigr)^{-1}\ge H_+^{(\ell)}
        -
        \omega^2\bigl(H_-^{(\ell)}\bigr)^{-1}.
$$
Hence
\[
    n_-\big(H_+^{(\ell+1)}
        -
        \omega^2\bigl(H_-^{(\ell+1)}\bigr)^{-1}\big)
    \leq
    n_-\big(H_+^{(\ell)}
        -
        \omega^2\bigl(H_-^{(\ell)}\bigr)^{-1}\big),
\]
equivalently,
\[
    n_-\bigl(K^{(\ell+1)}-\omega^2\bigr)
    \leq
    n_-\bigl(K^{(\ell)}-\omega^2\bigr).
\]

\medskip

\noindent
\emph{Step 2. Existence for \(\ell\leq c_0 R_\epsilon\).} We next prove the existence of internal modes. Fix a finite-dimensional subspace
\begin{align*}
    E\subset C_c^\infty\left(\frac14,\frac12\right),
    \qquad
    \dim E=N,
\end{align*}
where \(N\geq3\) is fixed. Define the rescaled space
\begin{align*}
    E_\epsilon
    :=
    \left\{
        u(r)=R_\epsilon^{-1/2}
        \phi\left(\frac{r}{R_\epsilon}\right):
        \phi\in E
    \right\}.
\end{align*}
Every \(u\in E_\epsilon\) is supported in
\begin{align*}
    \frac14R_\epsilon
    \leq r\leq
    \frac12R_\epsilon.
\end{align*}

Inside this region, \(Q_\omega\) is exponentially close to the
flat-top equilibrium \(Q_\epsilon^*\), which satisfies
\begin{align*}
    \omega-(Q_\epsilon^*)^2+(Q_\epsilon^*)^4=0.
\end{align*}
Thus
\begin{align*}
    \sup_{\frac14R_\epsilon\leq r\leq\frac12R_\epsilon}
    \left|
        \omega-Q_\omega(r)^2+Q_\omega(r)^4
    \right|
    \leq
    Ce^{-cR_\epsilon}.
\end{align*}

By finite-dimensional norm equivalence and scaling, there exists a constant \(C\) (may depend on $E$), such that
\begin{align*}
    \int_0^\infty |u'(r)|^2\,dr
    \leq
    CR_\epsilon^{-2}\|u\|_{L^2}^2,
\end{align*}
and, since \(r\approx R_\epsilon\) on the support of \(u\),
\begin{align*}
    \ell(\ell+1)
    \int_0^\infty \frac{|u(r)|^2}{r^2}\,dr
    \leq
    C\frac{(\ell+1)^2}{R_\epsilon^2}
    \|u\|_{L^2}^2.
\end{align*}
Combining the above estimates, we obtain
\begin{align*}
    \left\langle H_-^{(\ell)}u,u\right\rangle_{L^2}
    \leq
    C\frac{(\ell+1)^2}{R_\epsilon^2}
    \|u\|_{L^2}^2.
\end{align*}

First suppose \(\ell\ge1\). In addition, the Cauchy--Schwarz inequality
gives
\begin{align*}
    \|u\|_{L^2}^4
    \leq
    \left\langle H_-^{(\ell)}u,u\right\rangle_{L^2}
    \left\langle
        \bigl(H_-^{(\ell)}\bigr)^{-1}u,u
    \right\rangle_{L^2}.
\end{align*}
Therefore,
\begin{align*}
    \left\langle
        \bigl(H_-^{(\ell)}\bigr)^{-1}u,u
    \right\rangle_{L^2}
    \geq
    c\frac{R_\epsilon^2}{(\ell+1)^2}
    \|u\|_{L^2}^2.
\end{align*}

The coefficients of \(H_+^{(\ell)}\) are uniformly bounded on the
support of \(u\). Hence
\begin{align*}
    \left\langle H_+^{(\ell)}u,u\right\rangle_{L^2}
    \leq
    C\left(
        1+\frac{(\ell+1)^2}{R_\epsilon^2}
    \right)
    \|u\|_{L^2}^2.
\end{align*}
Consequently,
\begin{align*}
    \left\langle
        \left(
            H_+^{(\ell)}
            -\omega^2\bigl(H_-^{(\ell)}\bigr)^{-1}
        \right)u,u
    \right\rangle_{L^2}
    &\leq
    \left[
        C\left(
            1+\frac{(\ell+1)^2}{R_\epsilon^2}
        \right)
        -
        c\omega^2
        \frac{R_\epsilon^2}{(\ell+1)^2}
    \right]
    \|u\|_{L^2}^2.
\end{align*}

Thus, if \(\ell\leq c_0R_\epsilon\) for some sufficiently small \(c_0\), then
\begin{align*}
    \left\langle
        \left(
            H_+^{(\ell)}
            -\omega^2\bigl(H_-^{(\ell)}\bigr)^{-1}
        \right)u,u
    \right\rangle_{L^2}
    <0
\end{align*}
for every \(0\neq u\in E_\epsilon\). Thus
\begin{align*}
    n_-\big(K^{(\ell)}-\omega^2\big)
    =n_-\left(
        H_+^{(\ell)}
        -\omega^2\bigl(H_-^{(\ell)}\bigr)^{-1}
    \right)\geq N,
    \qquad
    1\leq\ell\leq cR_\epsilon.
\end{align*}
By the scalar spectral result of Section~3, \(K^{(1)}\geq0\) has only the translation zero mode, whereas \(K^{(\ell)}>0\) for \(\ell\ge2\).  Since the preceding negative-index lower bound has dimension at least \(N\ge3\), \(K^{(\ell)}\) has at least one eigenvalue in \((0,\omega^2)\) for every \(1\le\ell\le cR_\epsilon\).

For \(\ell=0\), we restrict to \(\{rQ_\omega\}^\perp\). Since
\begin{align*}
    \dim E_\epsilon=N\geq3,
\end{align*}
one has
\begin{align*}
    \dim\bigl(E_\epsilon\cap\{rQ_\omega\}^\perp\bigr)\geq N-1\geq2.
\end{align*}
On this subspace, \(H_-^{(0)}>0\), and the same estimates show that
\(K^{(0)}-\omega^2\) has a negative subspace of dimension at least
\(N-1\ge2\).  Congruence with \(H_-^{(0)}\) shows that \(K^{(0)}\) has at
most the single negative direction of \(H_+^{(0)}\); moreover it has no zero
mode on the constrained radial space.  Hence at least one of these
eigenvalues lies in \((0,\omega^2)\).

\medskip

\noindent
\emph{Step 3. Absence for \(\ell\geq C_0 R_\epsilon\).}
Choose \(C_0>0\) sufficiently large. The plateau, interface, and exterior estimates of Section~2 imply
\[
 \sup_{r>0}r^2\left(
 |-Q_\omega(r)^2+Q_\omega(r)^4|
 +|-3Q_\omega(r)^2+5Q_\omega(r)^4|
 \right)\le CR_\epsilon^2.
\]
Indeed, the potentials are uniformly bounded for \(r\lesssim R_\epsilon\)
and decay exponentially for \(r\gtrsim R_\epsilon\).  Therefore, for \(\ell\geq C_0 R_\epsilon\),
\begin{align*}
	\frac{\ell(\ell+1)}{r^2} \ge \big|-Q_\omega(r)^2+Q_\omega(r)^4\big|+\big|-3Q_\omega(r)^2+5Q_\omega(r)^4\big|.
\end{align*}
Therefore,
\[
    H_-^{(\ell)}
    \geq
    -\partial_r^2+\omega
    \geq
    \omega
\]
and
\[
    H_+^{(\ell)}
    \geq
    -\partial_r^2+\omega
    \geq
    \omega.
\]
Consequently,
\begin{align*}
    K^{(\ell)}
    &=
    \bigl(H_-^{(\ell)}\bigr)^{1/2}
    H_+^{(\ell)}
    \bigl(H_-^{(\ell)}\bigr)^{1/2}
    \\
    &\geq
    \omega H_-^{(\ell)}
    \\
    &\geq
    \omega^2.
\end{align*}
Hence
\[
    \sigma\bigl(K^{(\ell)}\bigr)
    \cap(-\infty,\omega^2)
    =
    \varnothing.
\]

Combining Steps~1--3, there exists a unique integer \(\widetilde\ell_0\) such that
$$
cR_\epsilon \le \widetilde{\ell}_0 \le CR_\epsilon
$$
with the stated property.
\end{proof}






\bigskip

\end{document}